\newcommand{\Rb}{\mathbb R} 
\newcommand{\Cb}{\mathbb C} 
\newcommand{\Eb}{\mathbb E} 
\newcommand{\Mb}{\mathbb M}
\newcommand{\Gb}{\mathbb G}
\newcommand{\Kb}{\mathbb K}
\newcommand{\Hb}{\mathbb H}
\newcommand{\Pb}{\mathbb P}  
\newcommand{\Wb}{\mathbb W} 
\newcommand{\Ac}{\mathcal A}
\newcommand{\Hc}{\mathcal H}
\newcommand{\Lc}{\mathcal L}
\newcommand{\Fc}{\mathcal F}
\newcommand{\Vc}{\mathcal V}
\newcommand{\Wc}{\mathcal W}
\newcommand{\Mcal}{\mathcal M}
\newcommand{\Sc}{\mathcal S}
\newcommand{\vareps}{\varepsilon}
\newcommand{\ic}{{\rm i}}
\newcommand{\dd}{\;{\rm d}}
\DeclareMathOperator*{\loc}{loc}
\DeclareMathOperator*{\supp}{supp}
\DeclareMathOperator*{\ima}{\Im}
\DeclareMathOperator*{\rea}{\Re}
\DeclareMathOperator*{\GN}{GN}
\DeclareMathOperator*{\gn}{gn}
\newcommand{\ee}{{\rm e}}
\numberwithin{equation}{section}
\newtheorem{theorem}{Theorem}[section]
\newtheorem{lemma}[theorem]{Lemma}
\newtheorem{proposition}[theorem]{Proposition}
\newtheorem{definition}[theorem]{Definition}
\newtheoremstyle{remarkstyle}
{}{}{
}{ }{\bfseries}{.}{ }{\thmname{#1}\thmnumber{ #2}\thmnote{ (#3)}}
\theoremstyle{remarkstyle}
\newtheorem{remark}{Remark}[section]
\title[INLS System]{A system of inhomogeneous NLS arising in optical media with a $\chi^{(2)}$ nonlinearity, part I : Dynamics} 
\author[V. D. Dinh \& A. Esfahani]{Van Duong Dinh$^\dagger$ and Amin Esfahani$^*$}
\subjclass[2020]{35Q55; 35B44; 35P25\\ 
	$^\dagger$Ecole Normale Sup\'erieure de Lyon \& CNRS, UMPA (UMR 5669), France, contact@duongdinh.com\\
	$^*$Corresponding Author. Department of Mathematics, Nazarbayev University, Astana 010000, Kazakhstan, saesfahani@gmail.com, amin.esfahani@nu.edu.kz
	}
\begin{document}

\begin{abstract}
We study a system of inhomogeneous nonlinear Schrödinger equations that emerge in optical media with a $\chi^{(2)}$ nonlinearity. This nonlinearity, whose local strength is subject to a cusp-shaped spatial modulation, $\chi^{(2)}\sim |x|^{-\alpha}$ where $\alpha > 0$, can be induced by spatially non-uniform poling. Our first step is to establish a vectorial Gagliardo--Nirenberg type inequality related to the system. This allows us to identify the necessary conditions on the initial data that lead to the existence of global in time solutions. By exploiting the spatial decay at infinity of the nonlinearity, we demonstrate the non-radial energy scattering in the mass-supercritical regime for global solutions. These solutions have initial data that lie below a mass-energy threshold, regardless of whether the system is mass-resonant or non-mass resonant. Lastly, we provide the criteria for the existence of non-radial blow-up solutions with mass-critical and mass-supercritical nonlinearities in both mass and non-mass resonance cases.
\\ \\
Keywords: Inhomogeneous Nonlinear Schr\"odinger Equation; Global Existence; Scattering; Blow-up
\end{abstract}

\maketitle


\section{Introduction}
\label{S-intro}
\setcounter{equation}{0}

The Nonlinear Schrödinger Equation (NLS) plays a crucial role in the field of nonlinear optics, particularly in modeling the propagation of intense laser beams within a homogeneous bulk medium exhibiting Kerr nonlinearity. It's a well-established fact that the NLS, which governs beam propagation, cannot support stable high-power propagation within a homogeneous bulk medium. However, towards the end of the previous century, it was proposed that stable high-power propagation could be realized in plasma. This could be achieved by transmitting an initial laser beam that forms a channel with diminished electron density, thereby reducing the nonlinearity within the channel (refer to \cite{Gill, LT} for more details).

Under such circumstances, the beam propagation can be represented by the inhomogeneous nonlinear Schrödinger equation, given by:
\begin{align} \label{INLS-K}
\ic \partial_t  u + \Delta u + K(x) |u|^p u =0, \quad (t,x) \in \Rb \times \Rb^d,
\end{align}
In this equation, $u$ represents the electric field in laser and optics, $\alpha >0$ denotes the power of nonlinear interaction, and the potential $K(x)$ is proportional to the electron density. Towers and Malomed \cite{TM}, through variational approximation and direct simulations, observed that for a specific type of nonlinear medium, the equation \eqref{INLS-K} results in completely stable beams.

When $K(x)$ is a constant, \eqref{INLS-K} is the standard NLS equation which is a subject of thorough study over the past several decades (for more details, refer to the monographs \cite{Cazenave, SS, Tao}).

When $K(x)$ is a non-constant bounded function, Merle \cite{Merle} proved the existence and nonexistence of minimal blow-up solutions with $p=\frac{4}{d}$. Rapha\"el and Szeftel \cite{RS} established sufficient conditions for the existence, uniqueness and characterization of minimial blow-up solutions to the equation.

When $K(x)$ behaves like $|x|^{-\alpha}$ with $\alpha>0$, equation \eqref{INLS-K} has been extensively studied over the past several years. Genoud and Stuart \cite{GS} demonstrated local well-posedness in $H^1(\mathbb{R}^d)$. Farah \cite{Farah} established the global existence of $H^1$-solutions and proved the existence of finite time blow-up solutions for initial data with finite variance. Dinh \cite{Dinh-NA} extended the existence of blow-up solutions to radial data, while Cardoso and Farah \cite{CF}, and Bai and Li \cite{BL} did so for non-radial initial data. Energy scattering was initially proven for radial initial data by Farah and Guzmán \cite{FG-1, FG-2}, Campos \cite{Campos}, and Dinh \cite{Dinh-JHDE}, and was later extended to non-radial initial data by Miao, Murphy, and Zheng \cite{MMZ}, as well as Cardoso, Farah, Guzmán, and Murphy \cite{CFGM} (see also \cite{DK, Murphy, CC-PAMS}).

In this paper, we consider a system of inhomogeneous nonlinear Schr\"odinger equations in optical media, namely
\begin{align} \label{INLS}
\left\{
\renewcommand*{\arraystretch}{1.3}
\begin{array}{l}
\ic \partial_t u + \frac{1}{2}\Delta u +|x|^{-\alpha} \overline{u} v =0,\\
\ic \partial_t v + \frac{\kappa}{2} \Delta v - \gamma v +\frac{1}{2} |x|^{-\alpha} u^2 =0, 
\end{array}
\right.
\end{align}
where $u, v: \Rb_t \times \Rb^d_x \rightarrow \Cb$ are unknown wave functions, $d\geq 1$, $\alpha>0$, $\kappa>0$, and $\gamma \in \Rb$. This system describes the two-wave (degenerate, alias type-I) quadratic interactions for the complex fundamental-frequency (FF) and second-harmonic (SH) amplitudes in the presence of the spatial singular modulation of the $\chi^{(2)}$ nonlinearity. The real coefficient $\gamma$ represents the SH-FF mismatch and, by rescaling, can be $0, \pm 1$. We refer the readers to \cite{LM-PRA, LM-EPJST} for a derivation of this system from the physical context.

\vspace{2mm}

In the nonsingular case $\alpha=0$, where \eqref{INLS} was introduced as a non-relativistic version of some Klein-Gordon systems (see \cite{HOT}), the well-posedness in $L^2(\Rb^d)\times L^2(\Rb^d)$ with $1\leq d\leq 4$ and in $H^1(\Rb^d)\times H^1(\Rb^d)$ with $1\leq d\leq 6$ were completely investigated in \cite{HOT} by the contraction argument combined with Strichartz estimates. Moreover, the global well-posedness in $L^2(\Rb^d)\times L^2(\Rb^d)$ when $d = 4$ as well as the global existence in $H^1(\Rb^d)\times H^1(\Rb^d)$ when $1 \leq d \leq 3$ were obtained due to the conservation of mass and energy. In addition, for initial data in $H^1(\Rb^5)\times H^1(\Rb^5)$, a condition for global existence was found which depends on the size of the initial data when compared to the associated ground states (\cite{HOT}). The global existence and energy scattering were studied in \cite{Hamano, HIN, WY, MX}, while the finite time blow-up were showed in \cite{Dinh, NP, IKN}. See also \cite{NP-CCM, NP-PDEA, NP-CVPDE, DF} for other results related to more general systems of NLS with quadratic interaction.  

\vspace{2mm}

By applying the abstract argument due to Cazenave \cite{Cazenave}, we show that system \eqref{INLS} is locally well-posed in $\Hc^1 := H^1(\Rb^d) \times H^1(\Rb^d)$. More precisely, we have the following local wellposedness result.

\begin{proposition}[Local wellposedness] \label{prop-lwp-inls}
	Let $1\leq d\leq 5$, $\kappa>0$, $\gamma \in \Rb$, $0<\alpha <\min\left\{2,d\right\}$, and $\alpha<\frac{6-d}{2}$ if $3\leq d\leq 5$. For any $(u_0,v_0)\in \Hc^1$, there exists a unique maximal solution 
	\[
	(u,v)\in C((-T_*,T^*), \Hc^1) \cap C^1((-T_*,T^*), \Hc^{-1})
	\]
	to \eqref{INLS} with initial data $\left.(u,v)\right|_{t=0}=(u_0,v_0)$, where $\Hc^{-1}:= H^{-1}(\Rb^d) \times H^{-1}(\Rb^d)$ is the dual space of $\Hc^1$.  The maximal time satisfies the blow-up alternative: if $T^*<\infty$ (resp. $T_*<\infty$), then 
	\[
	\lim_{t\nearrow T^*}\|(u(t),v(t))\|_{\Hc^1}=\infty \quad \left( \text{resp. } \lim_{t\searrow -T_*}\|(u(t),v(t))\|_{\Hc^1}=\infty \right).
	\] 
	In addition, there are conservation laws of mass and energy, i.e.,
	\begin{align*}
		\Mb(u(t),v(t)) &= \|u(t)\|^2_{L^2} + 2\|v(t)\|^2_{L^2} = \Mb(u_0,v_0), \\
		\Eb(u(t),v(t)) &= \frac{1}{2} \|\nabla u(t)\|^2_{L^2} + \frac{\kappa}{2}\|\nabla v(t)\|^2_{L^2} + \gamma\|v(t)\|^2_{L^2} - \rea \int_{\Rb^d} |x|^{-\alpha} u^2(t) \overline{v}(t) \dd x = \Eb(u_0,v_0), 
	\end{align*}
	for all $t\in (-T_*,T^*)$.
\end{proposition}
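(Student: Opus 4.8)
The plan is to recast \eqref{INLS} as an abstract semilinear Schr\"odinger equation on the Hilbert space $\Hc^1$ and then apply the vector-valued form of the abstract local well-posedness theory of Cazenave \cite{Cazenave}, which packages the Strichartz estimates and a contraction argument once a few structural and analytic properties of the nonlinearity are checked. Writing the unknown as $(u,v)$, the system becomes $\ii\partial_t(u,v)=\Ac(u,v)+\Nc(u,v)$, where $\Ac:=\mathrm{diag}\big({-}\tfrac12\Delta,\;{-}\tfrac{\kappa}2\Delta+\gamma\big)$ and $\Nc(u,v):=\big({-}|x|^{-\alpha}\,\overline{u}\,v,\;{-}\tfrac12|x|^{-\alpha}u^2\big)$. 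Since $\gamma\in\Rb$ only multiplies the second component of $\ee^{-\ii t\Ac}$ by the scalar phase $\ee^{-\ii\gamma t}$, the group $(\ee^{-\ii t\Ac})_{t\in\Rb}$ is unitary on $\Lc^2$, bounded on $\Hc^1$ and on $\Hc^{-1}$, and satisfies the Strichartz estimates componentwise. There are two structural facts to exploit. First, writing $\Nc=(\Nc_1,\Nc_2)$, the algebraic identity $\ima\int_{\rd}\big(\Nc_1(u,v)\,\overline{u}+2\,\Nc_2(u,v)\,\overline{v}\big)\dd x=0$ holds, because $\int_{\rd}|x|^{-\alpha}\big(\overline{u}^{2}v+u^2\overline{v}\big)\dd x=2\,\Pb(u,v)$ is real; this, together with the invariance $(u,v)\mapsto(\ee^{\ii\theta}u,\ee^{2\ii\theta}v)$, is the mechanism behind conservation of $\Mb$. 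Second, up to a fixed positive constant $\Ac(u,v)+\Nc(u,v)$ is the $\Lc^2$-gradient of $\Eb$, with the potential $\Pb$ in $C^1(\Hc^1,\Rb)$, which is the mechanism behind conservation of $\Eb$. It then only remains to verify the analytic hypothesis: $\Nc\in C(\Hc^1,\Hc^{-1})$ is bounded on bounded sets, and for some Schr\"odinger-admissible pair $(q,r)$ the map $\Nc$ sends $\Hc^1$-balls boundedly into $\Wc^{1,r'}$ together with a matching Lipschitz estimate $\|\Nc(u_1,v_1)-\Nc(u_2,v_2)\|_{\Lc^{r'}}\le C(M)\,\|(u_1,v_1)-(u_2,v_2)\|_{\Lc^{r}}$ whenever $\|(u_1,v_1)\|_{\Hc^1},\|(u_2,v_2)\|_{\Hc^1}\le M$.

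I expect this weighted nonlinear estimate to be the heart --- and the only real difficulty --- of the proof. On $\{|x|>1\}$ one has $|x|^{-\alpha}\le1$, so the estimate there reduces to that of the homogeneous quadratic system ($\alpha=0$) and follows from H\"older's inequality together with $H^1\hookrightarrow L^{s}$, as in \cite{HOT}. On $B:=\{|x|\le1\}$ the weight must be absorbed: since $\alpha<d$ one has $|x|^{-\alpha}\in L^{p}(B)$ for every $p<d/\alpha$, so that H\"older peels it off:
\[
\big\||x|^{-\alpha}u^2\big\|_{L^{r'}(B)}\le\big\||x|^{-\alpha}\big\|_{L^{p}(B)}\,\|u\|_{L^{2s}}^{2},\qquad\tfrac1{r'}=\tfrac1p+\tfrac1s,
\]
and then $H^1\hookrightarrow L^{2s}$ closes that term. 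Controlling $\Nc$ in $\Wc^{1,r'}$ forces us also to estimate $\nabla\Nc$, whose dangerous component carries the more singular weight $|x|^{-\alpha-1}$ coming from differentiating $|x|^{-\alpha}$; this term is handled by splitting $|x|^{-\alpha-1}=|x|^{-\alpha}|x|^{-1}$ and invoking a Hardy (or Hardy--Sobolev) inequality of the type $\||x|^{-1}u\|_{L^{r}}\lesssim\|\nabla u\|_{L^{r}}$, the factor $\|\nabla u\|_{L^r}$ being part of the Strichartz norm. The whole scheme amounts to choosing $(q,r)$ Schr\"odinger-admissible so that all the exponents arising are simultaneously consistent: $r'$ in the dual admissible range (so that the inhomogeneous Strichartz estimate and $L^{r'}\hookrightarrow H^{-1}$ apply), $p<d/\alpha$, $2s\le\frac{2d}{d-2}$ (or $2s<\infty$ if $d\le2$), and the Hardy exponent strictly below $d$. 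Tracking this system of inequalities is exactly what pins down the admissible range of $\alpha$, namely $0<\alpha<\min\{2,d\}$ in all dimensions together with the extra restriction $\alpha<\frac{6-d}{2}$ when $3\le d\le5$ (for $d\ge3$ the naive choice $r'=2$ is too lossy near the threshold, so one genuinely needs $r'<2$, and the admissibility of $(q,r)$ then forces the stated bound). The accompanying Lipschitz estimate needs no new idea: by the quadratic structure, $\|u_1^2-u_2^2\|\le(\|u_1\|+\|u_2\|)\|u_1-u_2\|$ and $\|\overline{u_1}v_1-\overline{u_2}v_2\|\le\|u_1\|\,\|v_1-v_2\|+\|v_2\|\,\|u_1-u_2\|$, applied in the same H\"older triples.

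With these estimates in hand the abstract theorem of \cite{Cazenave} applies: the Duhamel map
\[
(u,v)\longmapsto \ee^{-\ii t\Ac}(u_0,v_0)-\ii\int_0^t \ee^{-\ii(t-\tau)\Ac}\,\Nc\big(u(\tau),v(\tau)\big)\dd\tau
\]
is a contraction on a suitable ball of $C([-T,T],\Hc^1)\cap\Lc^q([-T,T],\Wc^{1,r})$, metrized by the weaker norm of $\Lc^q([-T,T],\Lc^{r})$, provided $T=T\big(\|(u_0,v_0)\|_{\Hc^1}\big)>0$ is small enough; this produces a unique local solution depending continuously on $(u_0,v_0)$. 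Patching local solutions gives the maximal solution on $(-T_*,T^*)$ and the blow-up alternative (if $\limsup_{t\nearrow T^*}\|(u(t),v(t))\|_{\Hc^1}$ were finite, one could re-solve with data taken close to $T^*$ and continue past $T^*$, a contradiction), and reading $\partial_t(u,v)=-\ii\Ac(u,v)-\ii\Nc(u,v)\in\Hc^{-1}$ yields $(u,v)\in C^1((-T_*,T^*),\Hc^{-1})$. Finally, conservation of $\Mb$ and $\Eb$ is part of the conclusion and rests on the two structural facts above: for smoother data (say in $H^2(\rd)\times H^2(\rd)$, where the solution persists for a short time and all the pairings are classical) one differentiates $\Mb(u(t),v(t))$ and $\Eb(u(t),v(t))$ in time, substitutes the equation, and the vanishing of the $\ima$-pairing together with the gradient structure forces these time derivatives to vanish; the general case $(u_0,v_0)\in\Hc^1$ then follows by approximation with such data and passage to the limit via the continuous dependence --- a density argument the abstract framework of \cite{Cazenave} is built to carry out precisely because $\Nc$ is, up to a constant, a gradient.
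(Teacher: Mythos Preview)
Your proposal is correct in spirit but takes a somewhat different, and heavier, route than the paper. The paper invokes the Kato-method version of Cazenave's abstract theorem (stated here as Proposition~\ref{prop-lwp-nls}), which only asks for Lipschitz estimates of the form $\|g_j(\vec u)-g_j(\vec v)\|_{\Lc^{\rho_j'}}\le C(M)\|\vec u-\vec v\|_{\Lc^{r_j}}$ on $\Hc^1$-balls of radius $M$, together with the gradient structure and the mass-type identity. No $\Wc^{1,r'}$ bound on the nonlinearity is required: the $\Hc^1$ control on the solution comes not from differentiating the Duhamel formula but from the energy conservation built into the abstract result. Concretely, the paper splits $g=g_1+g_2+g_3$ with $g_1=(0,-\gamma v)$ (you instead absorbed $\gamma$ into the linear group, which is equally fine), $g_2$ the quadratic term restricted to $B_1$, and $g_3$ the quadratic term on $B_1^c$. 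For $g_3$ one takes $\rho_3=r_3=3$ and uses $H^1\hookrightarrow L^3$; for $g_2$ one peels off $|x|^{-\alpha}$ in $L^{\gamma_2}(B_1)$ with $\frac1{\gamma_2}=\frac\alpha d+\varepsilon$ and takes $\rho_2=r_2=\frac{3d}{d-\alpha-d\varepsilon}$, whose compatibility with $H^1\hookrightarrow L^{\rho_2}$ is exactly what produces the range $\alpha<\frac{6-d}{2}$ for $3\le d\le 5$.

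What each approach buys: the paper's route avoids differentiating $|x|^{-\alpha}$ altogether, so no Hardy inequality enters and the argument is uniform across $1\le d\le 5$. Your direct-contraction route in $C_t\Hc^1\cap \Lc^q_t\Wc^{1,r}$ is perfectly legitimate and more self-contained, but the Hardy step $\||x|^{-1}f\|_{L^\rho}\lesssim\|\nabla f\|_{L^\rho}$ needs $1<\rho<d$, which is vacuous in $d=1$ and tight in $d=2$; you would have to substitute an ad hoc argument (e.g.\ $H^1\hookrightarrow L^\infty$ in $d=1$, or different exponent bookkeeping in $d=2$) to cover those cases. In dimensions $3\le d\le 5$ your gradient estimate does go through---indeed the paper uses exactly that Hardy trick later, in Lemma~\ref{lem-non-est-1}, for the scattering theory---but for local well-posedness it is an unnecessary complication.
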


When $\gamma=0$, we notice that system \eqref{INLS} has a scaling invariance
\[
(u_\lambda(t,x), v_\lambda(t,x)) = \left(\lambda^{2-\alpha} u(\lambda^2 t, \lambda x), \lambda^{2-\alpha} v(\lambda^2 t, \lambda x) \right), \quad \lambda>0.
\]
That is, if $(u(t),v(t))$ is a solution to \eqref{INLS} with initial data $(u_0,v_0)$, then $(u_\lambda(t),v_\lambda(t))$ is also a solution to \eqref{INLS} with initial data $(u_\lambda(0), v_\lambda(0))$. This scaling leaves the $\dot{\Hc}^{s_c}$-norm of initial data invariant, i.e.,
\[
\|(u_\lambda(0), v_\lambda(0))\|_{\dot{\Hc}^{s_c}}:=\|u_\lambda(0)\|_{\dot{H}^{s_c}} + \|v_\lambda(0)\|_{\dot{H}^{s_c}} 
= \|(u_0,v_0)\|_{\dot{\Hc}^{s_c}}, \quad \forall \lambda>0,
\] 
where
\begin{align} \label{sc}
s_c:=\frac{d}{2}- 2+\alpha
\end{align}
is the critical Sobolev exponent. When $s_c=0$ or $\alpha=\frac{4-d}{2}$, system \eqref{INLS} is called mass-critical; while for $s_c=1$ or $\alpha=\frac{6-d}{2}$, system \eqref{INLS} is known as energy-critical. According to this terminology, our local well-posedness result is available only for the energy-subcritical regime.
The case of energy-critical nonlinearity $\alpha=\frac{6-d}{2}$ with $3\leq d\leq 5$ is special and will be addressed in a forthcoming work. 

\vspace{2mm}

The main purpose of this paper is to give necessary and sufficient conditions on initial data which clarify the
global existence, energy scattering, and blowing-up behavior of solutions to \eqref{INLS}. To obtain these results, we 
show, in Section \ref{S-ineq}, the following vectorial Gagliardo--Nirenberg type inequality
\[
\Pb(u,v)\leq C_{\GN} \left(\Kb(u,v)\right)^{\frac{d+2\alpha}{4}} \left(\Mb(u,v)\right)^{\frac{6-d-2\alpha}{4}}, \quad \forall (u,v)\in \Hc^1,
\]
where
\begin{align*}
\Kb(u,v) &:= \|\nabla u\|^2_{L^2}+ \kappa \|\nabla v\|^2_{L^2}, \\
\Pb(u,v) &:= \Re \int_{\Rb^d} |x|^{-\alpha} u^2 \overline{v} \dd x.
\end{align*}
The proof of \eqref{GN-ineq} follows from a standard argument of Weinstein \cite{Weinstein} (see also \cite{Farah}). The only difference here is that instead of using the Schwarz symmetrization to exploit the radical compact embedding, we make use of the decay factor $|x|^{-\alpha}$ to show a compactness result (see Lemma \ref{lem-comp}) in the non-radial setting. The sharp constant $C_{\GN}$ is attained by a non-trivial solution to 
\begin{align} \label{equ-var-psi}
	\left\{
	\renewcommand*{\arraystretch}{1.3}
	\begin{array}{l}
		\frac{1}{2}\Delta \varphi -\varphi +|x|^{-\alpha} \overline{\varphi} \psi =0,\\
		\frac{\kappa}{2} \Delta \psi-2\psi +\frac{1}{2} |x|^{-\alpha} \varphi^2 =0,
	\end{array}
	\right.
	\quad x\in \Rb^d.
\end{align}
\begin{definition} [Nonlinear ground state] \label{def-non-gs}
	A non-trivial solution to \eqref{equ-var-psi} is called a nonlinear ground state if it optimizes the Gagliardo--Nirenberg type inequality \eqref{GN-ineq}.  
\end{definition}

As a direct consequence of the vectorial Gagliardo--Nirenberg inequality, we provide sufficient conditions on the initial data under which the solution of \eqref{INLS} exists globally in time. 

\begin{proposition}[Global existence] \label{prop-gwp}
	Let $d, \kappa, \gamma, \alpha$ be as in Proposition \ref{prop-lwp-inls} and $(u_0,v_0) \in \mathcal H^1$. Let $(\varphi,\psi)$ be a nonlinear ground state. Assume that one of the following conditions is satisfied:
	\begin{itemize}
		\item[(1)] (Mass-subcritical) $1\leq d\leq 3$ and $0<\alpha<\frac{4-d}{2}$;
		\item[(2)] (Mass-critical) $2\leq d\leq 3$, $\alpha=\frac{4-d}{2}$, and
		\begin{align} \label{cond-gwp-crit}
		\mathbb M(u_0,v_0) < \mathbb M(\varphi,\psi);
		\end{align}
		\item[(3)] (Mass-supercritical) $2\leq d\leq 5$, $\frac{4-d}{2}<\alpha<\frac{6-d}{2}$, and
		\begin{align} 
			\Hb(u_0,v_0) \left(\Mb(u_0,v_0)\right)^\sigma &< \Eb_0(\varphi,\psi) \left(\Mb(\varphi,\psi)\right)^\sigma, \label{cond-gwp-supe-1}\\
			\Kb(u_0,v_0) \left(\Mb(u_0,v_0)\right)^\sigma &<\Kb(\varphi,\psi) \left(\Mb(\varphi,\psi)\right)^\sigma, \label{cond-gwp-supe-2}
		\end{align}
		where $\sigma = \frac{6-d-2\alpha}{d+2\alpha-4}$, 
		\begin{align} \label{Hb}
			\Hb(u,v) := \left\{
			\begin{array}{ccl}
				\Eb(u,v) &\text{if}& \gamma \geq 0, \\
				\Eb(u,v) +\frac{|\gamma|}{2} \Mb(u,v) &\text{if}& \gamma<0.
			\end{array}
			\right.
		\end{align}
		and
		\begin{align} \label{Eb0}
			\Eb_0(\varphi,\psi):= \frac{1}{2} \Kb(\varphi,\psi) - \Pb(\varphi,\psi).
		\end{align}
	\end{itemize}
	Then the corresponding solution to \eqref{INLS} exists globally in time. In addition, we have
	$$
	\sup_{t\in \mathbb R} \|(u(t),v(t))\|_{\mathcal H^1} \leq C(u_0,v_0,\varphi,\psi).
	$$
\end{proposition}

\begin{remark}
	In the mass-critical case, the mass of nonlinear ground state remains unaffected by the selection of nonlinear ground states. Similarly, in the mass-supercritical regime, the quantities on the right hand side of \eqref{cond-gwp-supe-1} and \eqref{cond-gwp-supe-2} are independent of the choice of nonlinear ground states.
\end{remark}


Once global solutions exist, it is natural to study the long time behavior of these solutions. We have the following energy scattering for global solutions in the mass-supercritical regime.

\begin{theorem}[Energy scattering] \label{theo-scat}
	Let $3\leq d\leq 5$, $0<\alpha<\min\left\{2,\frac{d}{2}\right\}$, $\frac{4-d}{2}<\alpha<\frac{6-d}{2}$, $\kappa>0$, and $\gamma \in \Rb$. Let $(u_0,v_0) \in \Hc^1$ satisfy \eqref{cond-gwp-supe-1} and \eqref{cond-gwp-supe-2}. Then the corresponding solution to \eqref{INLS} scatters in $\Hc^1$ in both directions in the sense that there exist $(u_\pm,v_\pm)\in \Hc^1$ such that
	\[
	\lim_{t\rightarrow \pm \infty} \|(u(t),v(t))-(\Sc_1(t) u_\pm, \Sc_2(t) v_\pm)\|_{\Hc^1}=0,
	\]
	where $(\Sc_1(t), \Sc_2(t)) = \left(e^{it\frac{1}{2} \Delta}, e^{it \left(\frac{\kappa}{2} \Delta -\gamma\right)}\right)$ is the linear propagator.
\end{theorem}

\begin{remark}
	The restriction $\alpha<\frac{d}{2}$ is technical due to our nonlinear estimates (see Lemma \ref{lem-non-est-1}). This prevents us to show the energy scattering in two dimensions. 
\end{remark}

The main interest of Theorem \ref{theo-scat} is the energy scattering for general (non-radial) initial data, irrespective of whether it falls within mass-resonance or non-mass-resonance cases, provided it remains below a certain mass-energy threshold. To our knowledge, the energy scattering for quadratic system of NLS (i.e., $\alpha=0$ and $\gamma=0$) with non-radial data was known only for a slight perturbation of the mass resonance case, i.e., $|\kappa-1/2|\ll 1$ (see e.g., \cite{Hamano, HIN, MX, WY, NP-PDEA}). Here we prove the non-radial scattering for all $\kappa>0$. Our strategy is based on a recent work of Murphy \cite{Murphy} which is a combination of a scattering criterion and a space-time estimate. Here we make an effective use of the spatial decay at infinity of $|x|^{-\alpha}$. 

We next turn our attention to finding conditions under which the solutions of \eqref{INLS} blow up or grow up. To this end, we introduce
\begin{align} \label{Gb}
\Gb(u,v) = \Kb(u,v) -\frac{d+2\alpha}{2} \Pb(u,v)
\end{align}
which is nothing but the Pohozaev functional obtained by 
\[
\frac{\dd}{\dd t} \left( 2 \ima \int_{\Rb^d} x \cdot (\nabla u(t) \overline{u}(t) + \nabla v(t) \overline{v}(t)) \dd x\right) = 2\Gb(u(t),v(t)).
\]
We note that the left hand side is exactly the time derivative of the standard virial quantity 
\[
\mathcal V(t) = \int_{\Rb^d} |x|^2\left(|u(t)|^2+2|v(t)|^2\right) \dd x
\]
only if $\kappa=\frac{1}{2}$, which is usually referred to as the mass-resonance condition. We have the following blow-up or grow-up result in the mass-critical case.

\begin{theorem}[Blow-up or grow-up in the mass-critical case] \label{theo-blow-crit}
	Let $2\leq d\leq 3$, $\alpha=\frac{4-d}{2}$, $\kappa>0$, and $\gamma \in \Rb$. Let $(u_0,v_0) \in \Hc^1$ and $(u,v) \in C((-T_*,T^*), \Hc^1)$ be the corresponding maximal solution to \eqref{INLS}. Assume that 
	\begin{align} \label{blow-cond}
		\sup_{t\in(-T_*,T^*)} \Gb(u(t),v(t)) \leq -\delta
	\end{align}
	for some constant $\delta>0$, where $\Gb(u,v)$ is as in \eqref{Gb}.
	\begin{itemize}
		\item If $\kappa=\frac{1}{2}$, then the solution blows up in finite time, i.e., $T_*,T^*<\infty$ and 
		\[
		\lim_{t\nearrow T^*} \|(u(t),v(t))\|_{\Hc^1} =\infty, \quad \lim_{t\searrow -T_*} \|(u(t),v(t))\|_{\Hc^1} = \infty.
		\]
		\item If $\kappa \ne \frac{1}{2}$, then the solution either blows up in finite time or it blows up in infinite time in the sense that $T^*=\infty$ and there exists $C>0$ such that
		\[
		\Kb(u(t),v(t)) \geq Ct^2, \quad \forall t\geq t_0
		\]
		for some $t_0>0$ sufficiently large. A similar result holds for negative times.
	\end{itemize} 
	In particular, if $\Hb(u_0,v_0)<0$, where $\Hb$ is as in \eqref{Hb}, then the above blow-up/grow-up results hold.
\end{theorem}

\begin{theorem}[Blow-up or grow-up in the mass-supercritical case] \label{theo-blow-supe}
	Let $2\leq d\leq 5$, $0<\alpha<\min\{2,d\}$, $\alpha>\frac{4-d}{2}$, $\alpha<\frac{6-d}{2}$ if $3\leq d\leq 5$, $\kappa>0$, and $\gamma \in \Rb$. Let $(u_0,v_0) \in \Hc^1$ and $(u,v) \in C((-T_*,T^*), \Hc^1)$ be the corresponding maximal solution to \eqref{INLS}. Assume that \eqref{blow-cond} holds.
	\begin{itemize}
		\item If $2\leq d\leq 4$, then the solution blows up in finite time.
		\item If $d=5$, then the solution either blows up in finite time or it blows up in infinite time in the sense that $T^*=\infty$ and 
		\begin{align} \label{blow-infi-supe}
			\sup_{t\in [0,\infty)} \Kb(u(t),v(t)) =\infty.
		\end{align}
		Moreover, we have for all $T>0$,
		\begin{align} \label{blow-rate-T}
			\sup_{t\in [0,T]} \Kb(u(t),v(t)) \geq \left\{
			\begin{array}{lcl}
				C T^4 &\text{if}& \kappa=\frac{1}{2}, \\
				C T^{4/3} &\text{if}& \kappa \ne \frac{1}{2}.
			\end{array}
			\right.
		\end{align}
		A similar statement holds for negative times.
	\end{itemize} 
	In particular, if $(u_0,v_0) \in \Hc^1$ satisfies either $\Hb(u_0,v_0)<0$ or, if $\Hb(u_0,v_0) \geq 0$, we assume that
	\begin{align} 
		\Hb(u_0,v_0) \left(\Mb(u_0,v_0)\right)^\sigma &<\Eb_0(\varphi,\psi) \left(\Mb(\varphi,\psi)\right)^\sigma, \label{cond-blow-supe-1} \\
		\Kb(u_0,v_0) \left(\Mb(u_0,v_0)\right)^\sigma &>\Kb(\varphi,\psi) \left(\Mb(\varphi,\psi)\right)^\sigma, \label{cond-blow-supe-2}
	\end{align} 
	where $(\varphi,\psi)$ is a nonlinear ground state and $\Hb$ is as in \eqref{Hb}, then the above blow-up/grow-up results hold.
\end{theorem}

Our blow-up or grow-up results do not assume any symmetric condition or finite invariance of initial data and they hold regardless the mass-resonance or non-mass resonance cases (i.e., for all $\kappa>0$). These extend recent results on the blow-up/grow-up of the single inhomogeneous nonlinear Schr\"odinger equation (see \cite{BL,CF}). Comparing to the single inhomogeneous NLS, system \eqref{INLS} has an additional difficulty appeared in the non mass-resonance case $\kappa \ne \frac{1}{2}$, namely 
$$
\frac{d^2}{dt^2} \mathcal V(t) \ne 2 \Gb(u(t),v(t)).
$$
This prevents us from using the convexity argument as in \cite{BL, CF}. The proofs of Theorems \ref{theo-blow-crit} and \ref{theo-blow-supe} are based on localized virial estimates which was proposed by Ogawa and Tsutsumi \cite{OT-1, OT-2}. Here instead of making use of the radial assumption, we exploit the spatial decay at infinity of the nonlinearity. In the non-mass resonance case, we exploit an ODE technique which is inspired by Boulenger, Himmelsbach and Lenzmann's work \cite{BHL}. 

We stress that blow-up/grow-up results for the quadratic system of NLS (i.e. $\alpha=\gamma=0$) in the non-mass resonance case $\kappa \ne 1/2$ is only available under the radial symmetry or finite variance condition (see \cite{Dinh-NA, NP, IKN}) and the cylindrical symmetry assumption (see \cite{DF}). Therefore, we obtain stronger results comparing to the ones for system of NLS with quadratic interaction.

\vspace{2mm}

We finish the introduction by listing some notations which will be used throughout the sequel.

\paragraph{\bf Notations}
Denote
\begin{align*}
\Hc^1:&= H^1(\Rb^d) \times H^1(\Rb^d), \quad \dot{\Hc}^1:= \dot{H}^1(\Rb^d) \times \dot{H}^1(\Rb^d), \\
\Lc^p :&= L^p(\Rb^d) \times L^p(\Rb^d), \quad \Wc^{1,p}:= W^{1,p}(\Rb^d)\times W^{1,p}(\Rb^d)
\end{align*}
with the norms
\begin{align*}
\|(u,v)\|_{\Hc^1} :&= \|u\|_{H^1} + \|v\|_{H^1},\quad \|(u,v)\|_{\dot{\Hc}^1} := \|u\|_{\dot{H}^1} + \|v\|_{\dot{H}^1}, \\
\|(u,v)\|_{\Lc^p}:&=\|u\|_{L^p} +\|v\|_{L^p}, \quad \|(u,v)\|_{\Wc^{1,p}} := \|u\|_{W^{1,p}} + \|v\|_{W^{1,p}}.
\end{align*}
When $p=2$, we use $\Hc^1$ instead of $\Wc^{1,2}$. Let $I\subset \Rb$ be an interval. We denote
\[
\Lc^p(I, \Lc^q):= L^p(I, L^q(\Rb^d)) \times L^p(I, L^q(\Rb^d)), \quad \Lc^p(I, \Wc^{1,q}) := L^p(I, W^{1,q}(\Rb^d)) \times L^p(I, W^{1,q}(\Rb^d))
\]
with norms
\[
\|(u,v)\|_{\Lc^p(I, \Lc^q)} := \|u\|_{L^p(I,L^q)} + \|v\|_{L^p(I,L^q)}, \quad \|(u,v)\|_{\Lc^p(I,\Wc^{1,q})}:= \|u\|_{L^p(I, W^{1,q})} +\|v\|_{L^p(I, W^{1,q})}.
\]

\section{A vectorial Gagliardo-Nirenberg inequality}
\label{S-ineq}

\begin{proposition}[Gagliardo--Nirenberg inequality] \label{prop-GN-ineq}
	Let $1\leq d \leq 5$, $\kappa>0$, $0<\alpha<\min\{2,d\}$ and $\alpha<\frac{6-d}{2}$ if $3\leq d\leq 5$. Then the following inequality holds for all $(u,v)\in \Hc^1$:
	\begin{align}\label{GN-ineq}
	\Pb(u,v)\leq C_{\GN} \left(\Kb(u,v)\right)^{\frac{d+2\alpha}{4}} \left(\Mb(u,v)\right)^{\frac{6-d-2\alpha}{4}}.
	\end{align}
	The sharp constant $C_{\GN}$ is attained by a pair of functions $(\varphi,\psi)$ which is a non-trivial solution to \eqref{equ-var-psi}. In particular, we can take $(\varphi,\psi)$ to be positive, radially symmetric, and radially decreasing.
\end{proposition}

Before giving the proof of Proposition \ref{prop-GN-ineq}, let us recall the corresponding scalar weighted Gagliardo--Nirenberg inequality. 
\begin{proposition} [\cite{CKN,Farah, Yanagida}]\label{prop-gn-sob-ineq} Let $1\leq d\leq 5$, $0<\alpha<\min\{2,d\}$, and $\alpha<\frac{6-d}{2}$ if $3\leq d \leq 5$. Then we have
		\begin{align} \label{gn-ineq}
		\int_{\Rb^d} |x|^{-\alpha} |f(x)|^3 \dd x \leq C_{\gn} \|\nabla f\|^{\frac{d+2\alpha}{2}}_{L^2} \|f\|^{\frac{6-d-2\alpha}{2}}_{L^2}, \quad f\in H^1(\Rb^d).
		\end{align}
		The optimal constant $C_{\gn}$ is attained by a unique positive radial solution to 
		\begin{align} \label{Q}
		\Delta Q-Q+|\alpha|^{-\alpha} Q^2=0, \quad x \in \Rb^d.
		\end{align}
\end{proposition}

\begin{remark}
	The inequality \eqref{gn-ineq} (without optimal constants) is a special case of the so-called Cafferelli-Kohn-Nirenberg inequalities \cite{CKN}. The sharp constant in \eqref{gn-ineq} was proved in \cite{Farah}. The uniqueness of positive radial solutions to \eqref{Q} is due to \cite{Yanagida}. 
\end{remark}

To prove Proposition \ref{prop-GN-ineq}, we need the following compactness result which makes use of the spatial decay at infinity of the weighted term $|x|^{-\alpha}$.
\begin{lemma}\label{lem-comp}
	Let $1\leq d\leq 5$, $0<\alpha<\min\{2,d\}$, and $\alpha<\frac{6-d}{2}$ if $3\leq d\leq 5$. Let $\{(u_n,v_n)\}_n$ be a bounded sequence in $\Hc^1$. Then there exist $(u,v)\in \Hc^1$ and a subsequence still denoted by $\{(u_n,v_n)\}_n$ such that $(u_n,v_n) \rightharpoonup (u,v)$ weakly in $\Hc^1$ and
	\begin{align} \label{P-un-vn}
	\Pb(u_n,v_n)\rightarrow \Pb(u,v) \text{ as } n\rightarrow \infty.
	\end{align}
\end{lemma}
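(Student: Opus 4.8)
The obstruction to a one‑line argument is that $\Hc^1$ embeds into $\Lc^3=L^3(\Rb^d)\times L^3(\Rb^d)$ only continuously, never compactly, so weak convergence by itself cannot force $\Pb(u_n,v_n)\to\Pb(u,v)$; the compactness must be squeezed out of the two structural features of the weight $|x|^{-\alpha}$: its local integrability (plus a mild gain of integrability) near the origin, quantified by $\alpha<d$ and, for $3\le d\le5$, by $\alpha<\tfrac{6-d}{2}$, and its decay at infinity, quantified by $\alpha>0$. Accordingly the plan is a three–region truncation into a small ball $B_\epsilon=\{|x|<\epsilon\}$, a bounded annulus $A_{\epsilon,R}=\{\epsilon\le|x|\le R\}$, and the exterior $B_R^c=\{|x|>R\}$. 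First I would set $M:=\sup_n\|(u_n,v_n)\|_{\Hc^1}<\infty$ and, using Banach--Alaoglu together with the Rellich--Kondrachov theorem on each ball $\{|x|<k\}$ followed by a diagonal extraction, pass to a subsequence with $(u_n,v_n)\rightharpoonup(u,v)$ weakly in $\Hc^1$, $(u_n,v_n)\to(u,v)$ strongly in $L^3(K)\times L^3(K)$ for every compact $K\subset\Rb^d$, and $(u_n,v_n)\to(u,v)$ a.e.; weak lower semicontinuity then gives $\|(u,v)\|_{\Hc^1}\le M$.

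Since $\rea$ is $1$‑Lipschitz, it suffices to show that $\big|\int_{\Rb^d}|x|^{-\alpha}(u_n^2\overline v_n-u^2\overline v)\dd x\big|\to0$, and I estimate this integral over the three regions. I will use that for $1\le d\le5$ one has $H^1(\Rb^d)\subset L^3(\Rb^d)$, and more generally $H^1(\Rb^d)\subset L^r(\Rb^d)$ for $2\le r\le\tfrac{2d}{d-2}$ if $d\ge3$ (for all $2\le r<\infty$ if $d\le2$). On the tail, $|x|^{-\alpha}\le R^{-\alpha}$, so by H\"older and Sobolev
\[
\left|\int_{B_R^c}|x|^{-\alpha}u_n^2\overline v_n\dd x\right|\le R^{-\alpha}\|u_n\|_{L^3}^2\|v_n\|_{L^3}\le CM^3R^{-\alpha},
\]
with the identical bound for $(u,v)$. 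Near the origin I would fix an exponent $r$ with $3<r\le\tfrac{2d}{d-2}$ (any $r\in(3,\infty)$ if $d\le2$) so small that $q:=\tfrac{r}{r-3}$ satisfies $q<\tfrac{d}{\alpha}$; this choice is possible exactly because $\tfrac{3d}{d-\alpha}<\tfrac{2d}{d-2}$, which is equivalent to $\alpha<\tfrac{6-d}{2}$ (and is no constraint when $d\le2$). Then $\||x|^{-\alpha}\|_{L^q(B_\epsilon)}=C\epsilon^{d/q-\alpha}$ with $d/q-\alpha>0$, so weighted H\"older gives
\[
\left|\int_{B_\epsilon}|x|^{-\alpha}u_n^2\overline v_n\dd x\right|\le \||x|^{-\alpha}\|_{L^q(B_\epsilon)}\|u_n\|_{L^r}^2\|v_n\|_{L^r}\le CM^3\epsilon^{d/q-\alpha},
\]
again with the same bound for $(u,v)$. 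Hence, given $\eta>0$, I first choose $R$ large and $\epsilon$ small so that the $B_R^c$‑ and $B_\epsilon$‑contributions of both $(u_n,v_n)$ and $(u,v)$ are each at most $\eta/3$, uniformly in $n$.

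It remains to handle the fixed bounded annulus $A=A_{\epsilon,R}$, where $|x|^{-\alpha}\le\epsilon^{-\alpha}$. Writing $u_n^2\overline v_n-u^2\overline v=(u_n-u)(u_n+u)\overline v_n+u^2(\overline v_n-\overline v)$ and using H\"older with three $L^3(A)$ factors, the $A$‑contribution is bounded by
\[
\epsilon^{-\alpha}\Big(\|u_n-u\|_{L^3(A)}\|u_n+u\|_{L^3(A)}\|v_n\|_{L^3(A)}+\|u\|_{L^3(A)}^2\|v_n-v\|_{L^3(A)}\Big),
\]
which tends to $0$ as $n\to\infty$ by the strong $L^3_{\loc}$‑convergence, the factors $\|u_n+u\|_{L^3(A)}$ and $\|v_n\|_{L^3(A)}$ remaining bounded. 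So this term is $\le\eta/3$ for $n$ large, and summing the three regions yields $|\Pb(u_n,v_n)-\Pb(u,v)|\le\eta$ for all large $n$, which proves \eqref{P-un-vn}.

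The only genuinely delicate point is the near‑origin estimate: one must verify that a Sobolev exponent $r$ exists which is simultaneously large enough to put $|x|^{-\alpha}$ into $L^q(B_\epsilon)$ (forcing $q<d/\alpha$, i.e.\ $r>\tfrac{3d}{d-\alpha}$) and small enough to stay below the critical Sobolev exponent ($r\le\tfrac{2d}{d-2}$). The compatibility of these two requirements is precisely what the standing hypotheses $\alpha<\min\{2,d\}$ and, for $3\le d\le5$, $\alpha<\tfrac{6-d}{2}$ guarantee; in dimensions $d=1,2$ there is no constraint on $r$ and the argument is slightly easier.
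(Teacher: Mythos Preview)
Your proof is correct and follows essentially the same strategy as the paper: extract a weakly convergent subsequence with strong $L^q_{\loc}$ convergence, exploit the decay $|x|^{-\alpha}\le R^{-\alpha}$ at infinity, and handle the remaining bounded region by combining the local integrability of $|x|^{-\alpha}$ (this is where $\alpha<\tfrac{6-d}{2}$ enters) with Rellich--Kondrachov compactness. The only cosmetic difference is that the paper uses a two-region split $\{|x|\le R\}\cup\{|x|>R\}$, absorbing the origin directly via $\||x|^{-\alpha}\|_{L^\gamma(|x|\le R)}\lesssim R^{d\nu}$ and then letting the local strong convergence kill this factor, whereas you further split off a small ball $B_\epsilon$ to isolate the singularity; both bookkeepings lead to the same estimate.
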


\begin{proof}
	Let $\{(u_n,v_n)\}_n$ be a bounded sequence in $\Hc^1$. Then $\{u_n\}_n$ and $\{v_n\}_n$ are bounded sequences in $H^1(\Rb^d)$. Thus there exist $u,v \in H^1(\Rb^d)$ and subsequences still denoted by $\{u_n\}_n$ and $\{v_n\}_n$ such that 
	\begin{itemize}
		\item $u_n\rightharpoonup u$ and $v_n\rightharpoonup v$ weakly in $H^1(\Rb^d)$.
		\item $u_n \rightarrow u$ and $v_n \rightarrow v$ strongly in $L^q_{\loc}(\Rb^d)$ for all $q\geq 1$ and $q<\frac{2d}{d-2}$ if $d\geq 3$.
	\end{itemize}
	The idea is to use the smallness of $|x|^{-\alpha}$ for large $|x|$ and the strong convergence in $L^q_{\loc}(\Rb^d)$ for small $|x|$. More precisely, let $\vareps>0$. We first write
	\[
	\Pb(u_n,v_n) - \Pb(u,v) = \rea \int_{\Rb^d} |x|^{-\alpha} (u_n^2 - u^2)\overline{v}_n \dd x + \rea \int_{\Rb^d} |x|^{-\alpha} u^2 (\overline{v}_n-\overline{v})\dd x =: (\text{I}) + (\text{II}).
	\]
	For $R>0$ depending on $\vareps$ to be chosen shortly, we write
	\[
	(\text{I}) = \rea \int_{|x|\leq R} |x|^{-\alpha} (u_n^2-u^2)\overline{v}_n \dd x + \rea\int_{|x|\geq R} |x|^{-\alpha} (u_n^2-u^2)\overline{v}_n \dd x = (\text{I}_1) + (\text{I}_2).
	\]
	
	$\bullet$ For $(\text{I}_2)$, we have 
	\[
	|(\text{I}_2)| \leq R^{-\alpha} \|u_n+u\|_{L^3} \|u_n-u\|_{L^3} \|v_n\|_{L^3} \leq CR^{-\alpha} =\frac{\vareps}{4}
	\]
	provided that $R=\left(\frac{\vareps}{4C}\right)^{-\frac{1}{\alpha}}$, where we have used the Sobolev embedding $H^1(\Rb^d) \subset L^3(\Rb^d)$ for $1\leq d\leq 5$.
	
	$\bullet$ For $(\text{I}_1)$, we estimate
	\[
	|(\text{I}_1)| \leq \||x|^{-\alpha}\|_{L^\gamma(|x|\leq R)} \|u_n+u\|_{L^\rho} \|u_n-u\|_{L^\rho(|x|\leq R)} \|v_n\|_{L^\rho},
	\]
	where $1\leq \gamma, \rho\leq \infty$ are such that $1=\frac{1}{\gamma} + \frac{3}{\rho}$. We choose $\frac{1}{\gamma}=\frac{\alpha}{d}+\nu$ for some $\nu >0$ to be determined shortly, hence $	\rho=\frac{3d}{d-\alpha-d\nu}>2$. With this choice, we have 
	\[
	\||x|^{-\alpha}\|_{L^\gamma(|x|\leq R)} \leq C R^{d\nu}
	\]
	for some universal constant $C>0$.
	
	If $d=1,2$, we use the Sobolev embedding $H^1(\Rb^d)\subset L^q(\Rb^d)$ for $q\geq 2$ and the fact that $u_n \rightarrow u$ strongly in $L^q(|x|\leq R)$ for $q\geq 1$ to have for $n$ large enough depending on $\vareps$,
	\begin{align} \label{small-ball}
	|(\text{I}_1)| \leq C R^{d\nu}\|u_n-u\|_{L^\rho(|x|\leq R)} <\frac{\vareps}{4}.
	\end{align}
	
	If $3\leq d\leq 5$, we observe that $\rho<\frac{2d}{d-2}$ for $\nu>0$ sufficiently small due to $\alpha<\frac{6-d}{2}$. Hence, the Sobolev embedding $H^1(\Rb^d)\subset L^q(\Rb^d)$ for $2\leq q\leq \frac{2d}{d-2}$ and the convergence $u_n\rightarrow u$ strongly in $L^q(|x|\leq R)$ for all $1\leq q <\frac{2d}{d-2}$ imply \eqref{small-ball} for $n$ large enough depending on $\vareps$.
	 
	Collecting the above estimates, we have for $n$ sufficiently large depending on $\vareps$, $|(\text{I})|<\frac{\vareps}{2}$. A similar argument goes for $(\text{II})$ and we obtain for $n$ large enough depending on $\vareps$,
	\[
	|\Pb(u_n,v_n)-\Pb(u,v)|<\vareps.
	\]
	As $\vareps>0$ is arbitrary, we conclude the proof.
\end{proof}

\begin{proof}[Proof of Proposition \ref{prop-GN-ineq}]
	The proof follows from an idea of Weinstein \cite{Weinstein} (see also \cite{Farah}). However, instead of reducing the problem to radial setting using Schwarz' symmetrization, we work directly on general $\Hc^1$-functions. We only consider $(u,v) \ne (0,0)$ otherwise the result holds trivially. Define the Weinstein functional
	\[
	\Wb(u,v):= \frac{\Pb(u,v)}{\left(\Kb(u,v)\right)^{\frac{d+2\alpha}{4}} \left(\Mb(u,v)\right)^{\frac{6-d-2\alpha}{4}}},
	\]
	thus
	\[
	C_{\GN}:= \sup \left\{\Wb(u,v) : (u,v)\in \Hc^1\backslash \{(0,0)\}\right\}.
	\]
	Thanks to \eqref{gn-ineq}, we observe that $0<C_{\GN}<\infty$. Let $\{(u_n,v_n)\}_n$ be an optimizing sequence for $C_{\GN}$. As the Weinstein functional is invariant under the scaling $(\mu u(\lambda x), \mu v(\lambda x))$ with $\mu, \lambda>0$, we can use a suitable choice of scaling to assume (without loss of generality) that $\Mb(u_n,v_n) = \Kb(u_n,v_n) =1$ for all $n$ 
	and $\Pb(u_n,v_n) \rightarrow C_{\GN}$ as $n \to \infty$. Thus $\{(u_n,v_n)\}_n$ is a bounded sequence in $\Hc^1$. By Lemma \ref{lem-comp}, there exist $(u,v)$ and a subsequence still denoted by $\{(u_n,v_n)\}_n$ such that $(u_n,v_n)\rightharpoonup (u,v)$ weakly in $\Hc^1$ and $\Pb(u_n,v_n) \rightarrow \Pb(u,v)$ as $n\rightarrow \infty$. By the weak convergence in $\Hc^1$, we have
	\[
	\Kb(u,v) \leq \liminf_{n\rightarrow \infty} \Kb(u_n,v_n) =1, \quad \Mb(u,v) \leq \liminf_{n\rightarrow \infty} \Mb(u_n,v_n)=1.
	\]
	We infer that
	\[
	C_{\GN}=\lim_{n\rightarrow \infty} \Pb(u_n,v_n) = \Pb(u,v) \leq \Wb(u,v) \leq C_{\GN}
	\]
	hence
	\[
	\Kb(u,v)=\Mb(u,v)=1, \quad \Pb(u,v)=C_{\GN}
	\]
	or $(u,v)$ is an optimizer for $C_{\GN}$. It follows that
	\begin{align} \label{deri-W}
	\left.\frac{d}{d\vareps}\right|_{\vareps=0} \Wb(u+\vareps \chi, v+\vareps \vartheta) =0, \quad \forall \chi, \vartheta \in C^\infty_0(\Rb^d).
	\end{align}
	A direct computation yields
	\begin{align*}
	&\frac{1}{\Pb(u,v)} \rea \int_{\Rb^d} |x|^{-\alpha} \left( 2 \overline{u} v \overline{\chi} +  u^2 \overline{\vartheta}\right) \dd x \\
	&\mathrel{\phantom{===}} \phantom{} -\frac{d+2\alpha}{4 \Kb(u,v)} \rea \int_{\Rb^d} \left(-2\Delta u \overline{\chi} - 2\kappa \Delta v \overline{\vartheta}\right) \dd x - \frac{6-d-2\alpha}{4 \Mb(u,v)} \rea \int_{\Rb^d} \left(2u \overline{\chi} + 4 v \overline{\vartheta}\right) \dd x =0.
	\end{align*}	
	Testing again with $(\ic \chi, \ic \vartheta)$ instead of $(\chi,\vartheta)$, we obtain the same equality but with the imaginary part instead of the real one. In particular, we get
	\[
	\left\{
	\renewcommand*{\arraystretch}{1.5}
	\begin{array}{l}
	\frac{d+2\alpha}{2\Kb(u,v)} \Delta u -\frac{6-d-2\alpha}{2 \Mb(u,v)} u +\frac{2}{\Pb(u,v)} |x|^{-\alpha} \overline{u} v=0, \\
	\frac{d+2\alpha}{2\Kb(u,v)} \kappa \Delta v- \frac{6-d-2\alpha}{\Mb(u,v)} v + \frac{1}{\Pb(u,v)}|x|^{-\alpha} u^2 =0,
	\end{array}
	\right.
	\]
	in the weak sense. By setting 
	\[
	u(x) = \mu \varphi(\lambda x), \quad v(x)=\mu \psi(\lambda x)
	\]
	with
	\begin{align*}
	\lambda^2 = \frac{6-d-2\alpha}{d+2\alpha} \frac{\Kb(u,v)}{\Mb(u,v)}, \quad \mu = \frac{6-d-2\alpha}{2\lambda^\alpha} \frac{\Pb(u,v)}{\Mb(u,v)},
	\end{align*}
	we see that $\Wb(\varphi,\psi)=\Wb(u,v)=C_{\GN}$ or $(\varphi,\psi)$ is an optimizer for $C_{\GN}$, and $(\varphi,\psi)$ is a non-trivial solution to \eqref{equ-var-psi}.
	
	Now let $|\varphi|^*$ and $|\psi|^*$ be the Schwarz symmetric rearrangement of $|\varphi|$ and $|\psi|$ respectively. We have $\||\varphi|^*\|_{L^2}=\|\varphi\|_{L^2}$ and, by Polya-Szeg\"o's inequality, 
	\[
	\|\nabla |\varphi|^*\|_{L^2} \leq \|\nabla |\varphi|\|_{L^2} \leq \|\nabla \varphi\|_{L^2}. 
	\]
	By the extended Hardy-Littlewood inequality (see \cite[Theorem 1]{BH}), we also have
	\[
	\Pb(\varphi,\psi) \leq \int_{\Rb^d} |x|^{-\alpha} |\varphi|^2 |\psi| \dd x \leq \int_{\Rb^d} |x|^{-\alpha} (|\varphi|^*)^2 |\psi|^* \dd x = \Pb(|\varphi|^*, |\psi|^*).
	\]
	In particular, $\Wb(\varphi, \psi) \leq \Wb(|\varphi|^*, |\psi|^*)$ or $(|\varphi|^*,|\psi|^*)$ is also an optimizer for $C_{\GN}$. Thus we can assume, without loss of generality, that $\varphi$ and $\psi$ are non-negative, radially symmetric, and radially decreasing. It follows that
	\[
	\left\{
	\renewcommand*{\arraystretch}{1.3}
	\begin{array}{l}
	\frac{1}{2}\Delta \varphi -\varphi =-|x|^{-\alpha} \overline{\varphi} \psi \leq 0,\\
	\frac{\kappa}{2} \Delta \psi -2\psi =-\frac{1}{2} |x|^{-\alpha} \varphi^2 \leq 0,
	\end{array}
	\right. \text{ on } \Rb^d.
	\]
	By the maximum principle (see e.g., \cite[Theorem 3.5]{GT}), both $\varphi$ and $\psi$ are positive. The proof is complete.
\end{proof}

The existence of nonlinear ground states (see Definition \ref{def-non-gs}) is proved in Proposition \ref{prop-GN-ineq}. We collect some properties of solutions of \eqref{equ-var-psi} in the following lemma.

\begin{lemma} \label{lem-var-psi}
	Let $(\varphi,\psi)$ be a non-trivial solution to \eqref{equ-var-psi}. Then the following identities hold:
	\begin{align}
	\frac{1}{2} \Kb(\varphi,\psi) +\Mb(\varphi,\psi) - \frac{3}{2} \Pb(\varphi,\psi) &=0, \label{iden-1}\\
	\frac{d-2}{4} \Kb(\varphi,\psi) + \frac{d}{2} \Mb(\varphi,\psi) - \frac{d-\alpha}{2} \Pb(\varphi,\psi) &=0. \label{iden-2}
	\end{align}
	In particular, we have
	\begin{align} \label{iden-var-psi}
	\Kb(\varphi,\psi) = \frac{d+2\alpha}{2} \Pb(\varphi,\psi) = \frac{2(d+2\alpha)}{6-d-2\alpha} \Mb(\varphi,\psi),
	\end{align}
\end{lemma}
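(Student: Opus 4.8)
The plan is to derive \eqref{iden-1} as the Nehari identity and \eqref{iden-2} as the Pohozaev identity for \eqref{equ-var-psi}, and then to solve the resulting $2\times 2$ linear system for $\Kb(\varphi,\psi)$ and $\Mb(\varphi,\psi)$ in terms of $\Pb(\varphi,\psi)$, which immediately gives \eqref{iden-var-psi}.

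For \eqref{iden-1} I would pair the first equation of \eqref{equ-var-psi} with $\overline\varphi$ and the second with $\overline\psi$, integrate over $\Rb^d$, take real parts, and add. Since $\rea\int_{\Rb^d}\overline\varphi\Delta\varphi\dd x=-\|\nabla\varphi\|_{L^2}^2$, $\rea\int_{\Rb^d}\overline\psi\Delta\psi\dd x=-\|\nabla\psi\|_{L^2}^2$, and $\rea\int_{\Rb^d}|x|^{-\alpha}\overline\varphi^2\psi\dd x=\rea\int_{\Rb^d}|x|^{-\alpha}\varphi^2\overline\psi\dd x=\Pb(\varphi,\psi)$, the first pairing yields $-\tfrac12\|\nabla\varphi\|_{L^2}^2-\|\varphi\|_{L^2}^2+\Pb(\varphi,\psi)=0$ and the second $-\tfrac\kappa2\|\nabla\psi\|_{L^2}^2-2\|\psi\|_{L^2}^2+\tfrac12\Pb(\varphi,\psi)=0$; adding these produces \eqref{iden-1}. (Equivalently, noting that \eqref{equ-var-psi} is exactly the Euler--Lagrange system of $\mathcal{S}(u,v):=\tfrac12\Kb(u,v)+\Mb(u,v)-\Pb(u,v)$ --- the derivatives of $\Pb,\Kb,\Mb$ being those already computed in the proof of Proposition \ref{prop-GN-ineq} --- one may simply differentiate $s\mapsto\mathcal{S}(s\varphi,s\psi)=\tfrac{s^2}{2}\Kb+s^2\Mb-s^3\Pb$ at $s=1$.)

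For \eqref{iden-2} I would run the standard Pohozaev argument: multiply the first equation of \eqref{equ-var-psi} by $x\cdot\nabla\overline\varphi$, the second by $x\cdot\nabla\overline\psi$, take real parts, integrate, and add. Integration by parts gives $\rea\int_{\Rb^d}\Delta\varphi\,\overline{x\cdot\nabla\varphi}\dd x=\tfrac{d-2}{2}\|\nabla\varphi\|_{L^2}^2$ and $\rea\int_{\Rb^d}\varphi\,\overline{x\cdot\nabla\varphi}\dd x=-\tfrac d2\|\varphi\|_{L^2}^2$, with the analogous identities for $\psi$, so the quadratic terms contribute $\tfrac{d-2}{4}\Kb(\varphi,\psi)+\tfrac d2\Mb(\varphi,\psi)$; using $\overline\varphi\,\overline{x\cdot\nabla\varphi}=\tfrac12 x\cdot\nabla(\overline\varphi^2)$ and $\nabla\cdot(|x|^{-\alpha}x)=(d-\alpha)|x|^{-\alpha}$, the two coupling terms combine into $\tfrac12\rea\int_{\Rb^d}|x|^{-\alpha}\,x\cdot\nabla(\varphi^2\overline\psi)\dd x=-\tfrac{d-\alpha}{2}\Pb(\varphi,\psi)$. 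Summing the two identities yields \eqref{iden-2}. (Alternatively, using $\|\nabla\varphi(\lambda\cdot)\|_{L^2}^2=\lambda^{2-d}\|\nabla\varphi\|_{L^2}^2$, $\|\varphi(\lambda\cdot)\|_{L^2}^2=\lambda^{-d}\|\varphi\|_{L^2}^2$ and $\Pb(\varphi(\lambda\cdot),\psi(\lambda\cdot))=\lambda^{\alpha-d}\Pb(\varphi,\psi)$, one differentiates $\lambda\mapsto\mathcal{S}(\varphi(\lambda\cdot),\psi(\lambda\cdot))=\lambda^{2-d}\tfrac12\Kb+\lambda^{-d}\Mb-\lambda^{\alpha-d}\Pb$ at $\lambda=1$.)

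Finally, writing \eqref{iden-1} as $\Kb+2\Mb=3\Pb$ and \eqref{iden-2} as $(d-2)\Kb+2d\Mb=2(d-\alpha)\Pb$ (all evaluated at $(\varphi,\psi)$) and eliminating $\Mb$ gives $\Kb=\tfrac{d+2\alpha}{2}\Pb$, and then $\Mb=\tfrac{6-d-2\alpha}{4}\Pb$; since $6-d-2\alpha>0$ throughout the admissible range of $(d,\alpha)$, this rearranges into \eqref{iden-var-psi}. The one genuinely delicate point is the rigorous justification of the Pohozaev computation, since $x\cdot\nabla\overline\varphi$ need not belong to $\Hc^1$ and the weight $|x|^{-\alpha}$ is singular at the origin. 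This is handled in the usual way: by elliptic regularity an $\Hc^1$-solution of \eqref{equ-var-psi} is $W^{2,p}_{\loc}(\Rb^d\setminus\{0\})$ and, together with the decay of $(\varphi,\psi)$ at infinity (which follows from standard elliptic arguments, the weight $|x|^{-\alpha}$ vanishing at infinity), this legitimizes all integrations by parts after inserting a cutoff $\chi(x/R)$ and letting $R\to\infty$; near the origin the hypotheses $\alpha<\min\{2,d\}$ ensure both $|x|^{-\alpha}\in L^1_{\loc}$ and $\nabla\cdot(|x|^{-\alpha}x)=(d-\alpha)|x|^{-\alpha}\in L^1_{\loc}$, so no boundary contribution arises there. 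The remaining manipulations are routine.
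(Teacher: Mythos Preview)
Your proposal is correct and follows essentially the same route as the paper: pair the two equations of \eqref{equ-var-psi} with $(\overline\varphi,\overline\psi)$ for the Nehari identity \eqref{iden-1}, with $(x\cdot\nabla\overline\varphi,\,x\cdot\nabla\overline\psi)$ for the Pohozaev identity \eqref{iden-2}, and then solve the resulting linear system. The only cosmetic difference is that the paper evaluates the two coupling integrals $\rea\int|x|^{-\alpha}\overline\varphi\psi\,x\cdot\nabla\overline\varphi\dd x$ and $\rea\int|x|^{-\alpha}\varphi^2\,x\cdot\nabla\overline\psi\dd x$ separately (obtaining $0$ and $-(d-\alpha)\Pb$ after a small $2\times2$ elimination), whereas you combine them into $\tfrac12\rea\int|x|^{-\alpha}x\cdot\nabla(\varphi^2\overline\psi)\dd x$ and integrate by parts once; your added remarks on the scaling derivation and on the cutoff/regularity justification go beyond what the paper records but are standard and correct.
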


\begin{proof}
	Multiplying the first equation with $\overline{\varphi}$ and the second one with $\overline{\psi}$, integrating over $\Rb^d$, and taking the real part, we get \eqref{iden-1}. Multiplying the first equation with $x\cdot \nabla \overline{\varphi}$ and the second one with $x \cdot \nabla \overline{\psi}$, integrating over $\Rb^d$, and taking the real part, we obtain \eqref{iden-2}. From \eqref{iden-1} and \eqref{iden-2}, we infer \eqref{iden-var-psi}. 
\end{proof}

\section{Global existence} \label{S-gwp}
\setcounter{equation}{0}

\subsection{Local wellposedness}
In this paragraph, we explain how to get the local existence of $\Hc^1$ solutions to \eqref{INLS} using an abstract theory of Cazenave \cite[Theorem 3.3.9, Remark 3.3.12, and Theorem 4.3.1]{Cazenave}. Let
\[
\vec{u}:=(u_1, u_2), \quad \Ac \vec{u} := (\alpha_1 \Delta u_1, \alpha_2 \Delta u_2)
\]
and consider
\begin{align}\label{NLS}
	\ic \partial_t \vec{u} + \Ac \vec{u} + g(\vec{u})=0.
\end{align}

\begin{proposition}[\cite{Cazenave}] \label{prop-lwp-nls}
	Let $g=g_1+g_2+g_3$ satisfy the following conditions: 
	\begin{itemize}
		\item[(1)] For $j=1,2,3$, $g_j \in C(\Hc^1, \Hc^{-1})$, $g_j(\vec{0}) = \vec{0}$, and there exists $G_j \in C^1(\Hc^1,\Rb)$ such that $g_{jk}=\partial_kG_j$ for $k=1,2$, where $g_j=(g_{j1},g_{j2})$ and $\partial_k$ stands for the Fr\'echet derivative with respect to the $k$-th variable. 
		\item[(2)] For $j=1,2,3$, there exist $r_j, \rho_j \in \left[2,\frac{2d}{d-2}\right)$ if $d\geq 2$ or $r_j,\rho_j \in [2,\infty]$ if $d=1$ such that for every $M>0$, there exists $C(M)>0$ so that
		\[
		\|g_j (\vec{u}) - g_j (\vec{v})\|_{\Lc^{\rho'_j}} \leq C(M) \|\vec{u} -\vec{v}\|_{\Lc^{r_j}}
		\]
		for all $\vec{u}, \vec{v} \in \Hc^1$ satisfying $\|\vec{u}\|_{\Hc^1} +\|\vec{v}\|_{\Hc^1} \leq M$. Here $(\rho_j, \rho'_j)$ is a H\"older conjugate pair.	
		\item[(3)] There exists $(\beta_1, \beta_2) \in \Rb^2$ such that for $j=1,2,3$ and every $\vec{u} \in \Hc^1$, 
		\[
		\ima(\beta_1 g_{j1}(\vec{u}) \overline{u}_1 + \beta_2 g_{j2}(\vec{u}) \overline{u}_2) =0 \text{ a.e. in } \Rb^d.
		\]
	\end{itemize}
	Then for any $\vec{u}_0\in \Hc^1$, there exists a unique maximal solution 
	\[
	\vec{u} \in C((-T_*,T^*), \Hc^1) \cap C^1((-T_*,T^*),\Hc^{-1})
	\]
	to \eqref{NLS} with initial data $\left.u\right|_{t=0} = \vec{u}_0$. The maximal time satisfies the blow-up alternative: if $T^*<\infty$ (resp. $T_*<\infty$), then $\lim_{t\nearrow T^*}\|\vec{u}(t)\|_{\Hc^1}=\infty$ (resp. $\lim_{t\searrow -T_*}\|\vec{u}(t)\|_{\Hc^1}=\infty$). In addition, there are conservation laws of mass and energy, i.e.,
	\begin{align*}
		\Mb(\vec{u}(t)) &= \beta_1 \|u_1(t)\|^2_{L^2} + \beta_2 \|u_2(t)\|^2_{L^2} = \Mb(\vec{u}_0), \tag{Mass} \\
		\Eb(\vec{u}(t)) &= \alpha_1 \|\nabla u_1(t)\|^2_{L^2} + \alpha_2\|\nabla u_2(t)\|^2_{L^2} - 2\sum_{j=1}^3 G_j(\vec{u}(t)) = \Eb(\vec{u}_0), \tag{Energy} 
	\end{align*}
	for all $t\in (-T_*,T^*)$.
\end{proposition}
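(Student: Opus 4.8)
The statement is the two-component (vectorial) version of the abstract Schr\"odinger well-posedness theory developed in Cazenave's book, and the plan is to reproduce that argument, pointing out only where the system structure enters. Since each component $\alpha_k\Delta$ is a nonzero real multiple of the Laplacian, $\Ac$ generates a unitary group $e^{\ic t\Ac}=(e^{\ic t\alpha_1\Delta},e^{\ic t\alpha_2\Delta})$ on $\Lc^2$, and after the trivial rescaling $t\mapsto\alpha_k t$ each component is a standard free Schr\"odinger propagator. Consequently the dispersive estimate and the $TT^*$/Christ--Kiselev machinery yield the full set of Strichartz estimates for $e^{\ic t\Ac}$, acting componentwise and hence on the product norms $\Lc^q(I,\Lc^r)$: for every Schr\"odinger-admissible pair $(q,r)$ one has $\|e^{\ic t\Ac}\vec u_0\|_{\Lc^q(\Rb,\Lc^r)}\lesssim\|\vec u_0\|_{\Lc^2}$ together with its inhomogeneous (retarded) counterpart, and both remain valid with one derivative since $\Ac$ commutes with $\nabla$. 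The ranges $r_j,\rho_j\in[2,\tfrac{2d}{d-2})$ (or $[2,\infty]$ when $d=1$) in condition (2) are precisely what guarantees the existence of the admissible time exponents completing $(q_j,r_j)$ and $(\gamma_j,\rho_j)$.

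First I would recast the Cauchy problem in Duhamel form,
\[
\vec u(t)=e^{\ic t\Ac}\vec u_0+\ic\int_0^t e^{\ic(t-s)\Ac}g(\vec u(s))\dd s,
\]
and solve it by a fixed-point argument. For each $j$, condition (2) supplies a local Lipschitz bound $\|g_j(\vec u)-g_j(\vec v)\|_{\Lc^{\rho_j'}}\lesssim_M\|\vec u-\vec v\|_{\Lc^{r_j}}$; combining this pointwise-in-time estimate with the inhomogeneous Strichartz estimate shows that the Duhamel map is a contraction on a suitable ball of $C([-T,T],\Hc^1)\cap\bigcap_j\Lc^{q_j}([-T,T],\Wc^{1,r_j})$ once $T$ is small, depending only on $\|\vec u_0\|_{\Hc^1}$. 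This produces a unique local solution with values in $\Hc^1$; the regularity $\vec u\in C^1((-T_*,T^*),\Hc^{-1})$ then follows by reading off $\ic\partial_t\vec u=-\Ac\vec u-g(\vec u)$ in $\Hc^{-1}$, using $g_j\in C(\Hc^1,\Hc^{-1})$ from condition (1). The maximal solution and the blow-up alternative are obtained by the standard continuation argument: since the local existence time depends only on the $\Hc^1$-norm, the solution persists as long as $\|\vec u(t)\|_{\Hc^1}$ stays bounded, so finiteness of $T^*$ (resp. $T_*$) forces blow-up of this norm.

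The substantive content, and the main obstacle, is the rigorous derivation of the two conservation laws, since an $\Hc^1$-solution is a priori too rough for the formal computations. The plan is the usual regularization scheme: approximate $\vec u_0$ by smooth data to obtain $\Hc^2$-solutions, for which $\vec u\in C^1$ into $\Lc^2$ and all manipulations are legitimate, establish the identities there, and pass to the limit using the continuous dependence furnished by the contraction estimate. For the mass, one pairs the equation with $(\beta_1\overline u_1,\beta_2\overline u_2)$ in $\Lc^2$ and takes imaginary parts: the kinetic term is real because $\Ac$ is self-adjoint, and condition (3) forces the nonlinear contribution $\ima\sum_k\beta_k g_k(\vec u)\overline u_k$ to vanish pointwise, giving $\tfrac{\dd}{\dd t}\Mb(\vec u)=0$. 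For the energy, condition (1) is decisive: pairing the linear part with $\partial_t\vec u$ and taking real parts produces $\tfrac{\dd}{\dd t}\bigl(\alpha_1\|\nabla u_1\|^2_{L^2}+\alpha_2\|\nabla u_2\|^2_{L^2}\bigr)$ up to a harmless factor, while the gradient structure $g_{jk}=\partial_k G_j$ turns the nonlinear term into the exact time derivative $\tfrac{\dd}{\dd t}\,2\sum_j G_j(\vec u)$ via the chain rule for the real-valued $G_j$; the two combine to $\tfrac{\dd}{\dd t}\Eb(\vec u)=0$.

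The delicate point throughout is the limiting argument. One must verify that the regularized solutions converge in the Strichartz norms on a common time interval, and that each $G_j$, being only $C^1$ on $\Hc^1$, is continuous along this convergence, so that the conserved quantities pass to the limit and the identities persist for genuine $\Hc^1$-data. This is exactly where hypotheses (1) and (2) are used in combination, and it is the part that requires the most care; everything else follows the scalar template verbatim, the vectorial structure entering only through the componentwise Strichartz estimates and the sum of the two hypotheses over the two unknowns.
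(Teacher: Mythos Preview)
The paper does not prove this proposition; it is stated as a direct citation of Cazenave \cite[Theorem~3.3.9, Remark~3.3.12, and Theorem~4.3.1]{Cazenave} and is used only as a black box to derive Proposition~\ref{prop-lwp-inls}. Your sketch faithfully reproduces the standard Cazenave argument (componentwise Strichartz estimates, Duhamel contraction in $C_t\Hc^1\cap\bigcap_j\Lc^{q_j}_t\Wc^{1,r_j}_x$, blow-up alternative from the $\Hc^1$-norm dependence of the local time, and conservation laws via $\Hc^2$-regularization), so there is nothing to compare against in the paper itself.
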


\begin{proof}[Proof of Proposition \ref{prop-lwp-inls}]
	We apply Proposition \ref{prop-lwp-nls} with 
	\[
	(\alpha_1,\alpha_2)=\left(\frac{1}{2},\frac{\kappa}{2}\right), \quad (\beta_1, \beta_2) = (1,2)
	\]
	and
	\begin{align*}
		g_1(u,v) &= \left(0,-\gamma v\right), \\ 
		g_2(u,v) &= \left( \mathds{1}_{B_1} |x|^{-\alpha} \overline{u} v, \frac{1}{2} \mathds{1}_{B_1} |x|^{-\alpha} u^2\right), \\ g_3(u,v) &= \left(\mathds{1}_{B_1^c} |x|^{-\alpha} \overline{u} v, \frac{1}{2} \mathds{1}_{B_1^c} |x|^{-\alpha} u^2  \right),
	\end{align*}
	where 
	\[
	B_1 := \left\{x \in \Rb^d : |x| <1\right\}, \quad B_1^c:= \Rb^d \backslash B_1.
	\]
	We have
	\begin{align*}
		G_1(u,v) &= -\frac{\gamma}{2} \|v\|^2_{L^2}, \\
		G_2(u,v) &= \frac{1}{2} \rea \int_{B_1} |x|^{-\alpha} u^2\overline{v} \dd x, \\
		G_3(u,v) &= \frac{1}{2} \rea \int_{B_1^c} |x|^{-\alpha} u^2\overline{v} \dd x.
	\end{align*}
	One readily check that the conditions (1) and (3) are fulfilled. It remains to check the condition (2). 
	
	$\bullet$ For $g_1$, we simply take $r_1=\rho_1=2$ and get
	\[
	\|g_1(u_1,v_1)-g_1(u_2,v_2)\|_{\Lc^2} = \gamma \|v_1-v_2\|_{L^2} \leq \gamma \|(u_1,v_1)-(u_2,v_2)\|_{\Lc^2}.
	\]
	
	$\bullet$ For $g_2$, we write
	\begin{align*}
		\|g_2(u_1,v_1)-g_2(u_2,v_2)\|_{\Lc^{\rho'_2}} =\|\mathds{1}_{B_1} |x|^{-\alpha}(\overline{u}_1 v_1 - \overline{u}_2 v_2)\|_{L^{\rho_2'}} + \|\mathds{1}_{B_1}|x|^{-\alpha}(u_1^2-u_2^2)\|_{L^{\rho_2'}} =: (\text{I}) + (\text{II}).
	\end{align*}
	We only estimate $(\text{I})$ since the one for $(\text{II})$ is treated in a similar manner. We have
	\begin{align*}
		(\text{I}) \leq \|\mathds{1}_{B_1} |x|^{-\alpha}(\overline{u}_1-\overline{u}_2) v_1\|_{L^{\rho_2'}} + \|\mathds{1}_{B_1} |x|^{-\alpha}\overline{u}_2(v_1-v_2)\|_{L^{\rho_2'}} =:(\text{I}_1) + (\text{I}_2).
	\end{align*}
	By H\"older's inequality, 
	\begin{align*}
		(\text{I}_1) \leq \|\mathds{1}_{B_1} |x|^{-\alpha}\|_{L^{\gamma_2}} \|v_1\|_{L^{\rho_2}} \|u_1-u_2\|_{L^{\rho_2}}
	\end{align*}
	with $\gamma_2, \rho_2 \in [1,\infty]$ satisfying
	\[
	\frac{1}{\rho_2'} = \frac{1}{\gamma_2} + \frac{2}{\rho_2} \text{ or } 1-\frac{1}{\gamma_2} = \frac{3}{\rho_2}.
	\]
	We first choose $\frac{1}{\gamma_2} = \frac{\alpha}{d}+\vareps$ with $\vareps>0$ to be chosen later. This ensures $\|\mathds{1}_{B_1} |x|^{-\alpha}\|_{L^{\gamma_2}}<\infty$. It follows that
	\[
	\rho_2=\frac{3d}{d-\alpha-d\vareps} >2
	\]
	for any $\vareps>0$. 
	
	For $d=1,2$, we use the Sobolev embedding $H^1(\Rb^d) \subset L^q(\Rb^d)$ for all $2\leq q<\infty$ to get
	\[
	(\text{I}_1) \leq C \|v_1\|_{H^1} \|u_1-u_2\|_{L^{\rho_2}} \leq C(M)\|u_1-u_2\|_{L^{\rho_2}} \leq C(M)\|(u_1,v_1)-(u_2,v_2)\|_{\Lc^{\rho_2}}
	\]
	provided that $\|(u_1,v_1)\|_{\Hc^1}+\|(u_2,v_2)\|_{\Hc^1}\leq M$. 
	
	For $3\leq d\leq 5$, we observe that $\rho_2 <\frac{2d}{d-2}$ for $\vareps>0$ small enough due to $\alpha<\frac{6-d}{2}$. The Sobolev embedding $H^1(\Rb^d)\subset L^q(\Rb^d)$ for all $2\leq q \leq \frac{2d}{d-2}$ yields
	\begin{align*}
		(\text{I}_1) \leq C(M)\|(u_1,v_1)-(u_2,v_2)\|_{\Lc^{\rho_2}}
	\end{align*}
	provided that $\|(u_1,v_1)\|_{\Hc^1}+\|(u_2,v_2)\|_{\Hc^1}\leq M$. The term $(\text{I}_2)$ is treated similarly, and thus the condition (2) is satisfied by $g_2$ with $\rho_2=r_2$.
	
	$\bullet$ For $g_3$, we choose $\rho_3=r_3=3$ and we estimate
	\begin{align*}
		\|g_3(u_1,v_1)-g_3(u_2,v_2)\|_{\Lc^{\frac{3}{2}}} =\|\mathds{1}_{B^c_1} |x|^{-\alpha}(\overline{u}_1 v_1 - \overline{u}_2 v_2)\|_{L^{\frac{3}{2}}} + \|\mathds{1}_{B^c_1}|x|^{-\alpha}(u_1^2-u_2^2)\|_{L^{\frac{3}{2}}} =: (\text{III}) + (\text{IV}).
	\end{align*}
	As above, we only consider $(\text{III})$. We write
	\begin{align*}
		(\text{III}) \leq \|\mathds{1}_{B_1^c} |x|^{-\alpha}(\overline{u}_1-\overline{u}_2) v_1\|_{L^{\frac{3}{2}}} + \|\mathds{1}_{B_1^c} |x|^{-\alpha}\overline{u}_2(v_1-v_2)\|_{L^{\frac{3}{2}}} =:(\text{III}_1) + (\text{III}_2).
	\end{align*}
	By H\"older's inequality,  we have
	\begin{align*}
		(\text{III}_1) \leq \|\mathds{1}_{B^c_1} |x|^{-\alpha}\|_{L^\infty} \|v_1\|_{L^3} \|u_1-u_2\|_{L^3}.
	\end{align*}
	As $H^1(\Rb^d)\subset L^3(\Rb^d)$ for $1\leq d\leq 5$, we get
	\[
	(\text{III}_1)\leq C\|v_1\|_{H^1} \|u_1-u_2\|_{L^3} \leq C(M)\|(u_1,v_1)-(u_2,v_2)\|_{\Lc^3}
	\]
	provided that $\|(u_1,v_1)\|_{\Hc^1} + \|(u_2,v_2)\|_{\Hc^1}\leq M$. Thus the condition (2) is also satisfied by $g_3$.
\end{proof}

\subsection{Global existence}
We now prove the global existence results for \eqref{INLS} given in Proposition \ref{prop-gwp}. The proof follows from a standard argument (see e.g. \cite{Farah}), but for the sake of completeness, we provide some details. It suffices to show
\begin{align} \label{blow-alte}
	\sup_{t\in (-T_*,T^*)} \Kb(u(t),v(t)) \leq C
\end{align}
which together with the blow-up alternative (see Proposition \ref{prop-lwp-inls}) yields $T_*=T^*=\infty$.

\begin{proof}[\underline{Proof of Proposition \ref{prop-gwp} in the mass-subcritical case}]
	By \eqref{GN-ineq} and Young's inequality with $d+2\alpha<4$, we have
	\begin{align*}
	\Pb(u(t),v(t)) &\leq C_{\GN} \left( \Kb(u(t),v(t))\right)^{\frac{d+2\alpha}{4}} \left( \Mb(u(t),v(t))\right)^{\frac{6-d-2\alpha}{4}} \\
	&\leq \frac{1}{4} \Kb(u(t),v(t)) + C(C_{\GN}, \Mb(u(t),v(t))), \quad \forall t\in (-T_*,T^*).
	\end{align*}
	Using
	\begin{align} \label{est-gamma}
	-\gamma \|v(t)\|^2_{L^2} \leq \frac{|\gamma|}{2} \Mb(u(t),v(t)), 
	\end{align}
	we get
	\begin{align*}
	\Eb(u(t),v(t)) + \frac{|\gamma|}{2}\Mb(u(t),v(t)) &\geq	\Eb(u(t),v(t))-\gamma \|v(t)\|^2_{L^2} \\
	&=\frac{1}{2}\Kb(u(t),v(t)) - \Pb(u(t),v(t)) \\
	&\geq \frac{1}{4} \Kb(u(t),v(t)) - C(C_{\GN}, \Mb(u(t),v(t)))
	\end{align*}
	which together with the mass and energy conservation yields \eqref{blow-alte}.
\end{proof}

\begin{proof}[\underline{Proof of Proposition \ref{prop-gwp} in the mass-critical case}]
	We first observe that in the mass-critical case $\alpha=\frac{4-d}{2}$, by using \eqref{iden-var-psi}, we have
	\[
	C_{\GN} =\frac{1}{2 \left(\Mb(\varphi,\psi)\right)^{1/2}}.
	\]
	By \eqref{GN-ineq}, we get
	\[
	\Pb(u(t),v(t))\leq \frac{1}{2} \left(\frac{\Mb(u(t),v(t))}{\Mb(\varphi,\psi)}\right)^{1/2} \Kb(u(t),v(t))
	\]
	which along with \eqref{est-gamma} yield
	\[
	\frac{1}{2}\left(1-\left(\frac{\Mb(u(t),v(t))}{\Mb(\varphi,\psi)}\right)^{1/2}\right) \Kb(u(t),v(t)) \leq \Eb(u(t),v(t)) + \frac{|\gamma|}{2} \Mb(u(t),v(t)), \quad \forall t\in (-T_*,T^*).
	\]
	By the conservation of mass and energy and using \eqref{cond-gwp-crit}, we get \eqref{blow-alte}.
\end{proof}

\begin{proof}[\underline{Proof of Proposition \ref{prop-gwp} in the mass-supercritical case}]
	By the Gagliardo--Nirenberg inequality \eqref{GN-ineq}, we have 
	\begin{align*}
	(\Eb(u(t),v(t)) &- \gamma \|v(t)\|^2_{L^2}) \left(\Mb(u(t),v(t))\right)^\sigma \\
	&= \frac{1}{2} \Kb(u(t),v(t)) \left(\Mb(u(t),v(t))\right)^\sigma - \Pb(u(t),v(t)) \left(\Mb(u(t),v(t))\right)^\sigma \\
	&\geq \frac{1}{2} \Kb(u(t),v(t)) \left(\Mb(u(t),v(t))\right)^\sigma - C_{\GN} \left(\Kb(u(t),v(t))\right)^{\frac{d+2\alpha}{4}} \left(\Mb(u(t),v(t))\right)^{\frac{6-d-2\alpha}{4}+\sigma} \\
	&= g \left(\Kb(u(t),v(t)) \left(\Mb(u(t),v(t))\right)^\sigma\right), \quad \forall t\in (-T_*,T^*),
	\end{align*}
	where
	\begin{align} \label{defi-g}
	g(\lambda):= \frac{1}{2} \lambda - C_{\GN} \lambda^{\frac{n+2\alpha}{4}}.
	\end{align}
	On the other hand, using \eqref{est-gamma}, we have
	\[
	\Eb(u(t),v(t)) - \gamma \|v(t)\|^2_{L^2} \leq \left\{
	\begin{array}{ccl}
	\Eb(u(t),v(t)) &\text{if}& \gamma \geq 0, \\
	\Eb(u(t),v(t))+\frac{|\gamma|}{2} \Mb(u(t),v(t)) &\text{if}& \gamma<0,
	\end{array}
	\right.
	\]
	or
	\begin{align} \label{est-Hb}
	\Eb(u(t),v(t)) - \gamma \|v(t)\|^2_{L^2} \leq \Hb(u(t),v(t)), \quad \forall t \in (-T_*,T^*). 
	\end{align}
	By the conservation of mass and energy, we get
	\begin{align*} 
	g\left(\Kb(u(t),v(t)) \left(\Mb(u(t),v(t))\right)^\sigma\right) \leq \Hb(u_0,v_0) \left(\Mb(u_0,v_0)\right)^\sigma,
	\end{align*}
	which together with \eqref{cond-gwp-supe-1} imply
	\begin{align} \label{est-g}
	g\left(\Kb(u(t),v(t)) \left(\Mb(u(t),v(t))\right)^\sigma\right) < \Eb_0(\varphi,\psi) \left(\Mb(\varphi,\psi)\right)^\sigma, \quad \forall t\in (-T_*,T^*).
	\end{align}
	Thanks to the continuity of the map $t\mapsto \Kb(u(t),v(t)) \left(\Mb(u(t),v(t))\right)^\sigma$ (as $(u,v)\in C((-T_*,T^*), \Hc^1)$) and the fact that
	\begin{align} \label{g-GN}
		g\left( \Kb(\varphi,\psi) \left(\Mb(\varphi,\psi)\right)^\sigma\right) = \frac{d+2\alpha-4}{2(d+2\alpha)} \Kb(\varphi,\psi) \left(\Mb(\varphi,\psi)\right)^\sigma = \Eb_0(\varphi,\psi) \left(\Mb(\varphi,\psi)\right)^\sigma, 
	\end{align}
	we deduce from \eqref{cond-gwp-supe-2} that
	\begin{align} \label{claim-gwp-supe}
	\Kb(u(t),v(t)) \left( \Mb(u(t),v(t))\right)^\sigma < \Kb(\varphi,\psi) \left(\Mb(\varphi,\psi)\right)^\sigma, \quad \forall t\in (-T_*,T^*). 
	\end{align}	
	From \eqref{claim-gwp-supe} and the mass conservation, we obtain \eqref{blow-alte}.
\end{proof}

\section{Energy scattering}
\label{S-scat} 
\setcounter{equation}{0}
The purpose of this section is to prove the following asymptotic behavior (or energy scattering) of $\Hc^1$-solutions to \eqref{INLS} in the mass-supercritical regime.

\subsection{Dispersive and Strichartz estimates}
Let $\beta_1, \beta_2 \in \Rb$ and denote $\Sc(t):= \ee^{\ic t(\beta_1\Delta -\beta_2)}$ the Schr\"odinger operator. We have the following dispersive estimates (see e.g., \cite{Cazenave}): for $2\leq r\leq \infty$,
\begin{align} \label{dis-est}
	\|\Sc(t) f\|_{L^r_x} \lesssim |t|^{-\left(\frac{d}{2}-\frac{d}{r}\right)} \|f\|_{L^{r'}_x}, \quad \forall t\ne 0
\end{align}
for all $f\in L^{r'}_x(\Rb^d)$. 

Let $0\leq s< \min\left\{1,\frac{d}{2}\right\}$. A pair $(q,r)$ is called $\dot{H}^s$-admissible if 
\[
\frac{2}{q}+\frac{d}{r} =\frac{d}{2}-s
\]
and
\begin{align} \label{cond-r}
	\left\{
	\begin{array}{ccl}
		\frac{2d}{d-2s}<r<\frac{2d}{d-2} &\text{if}& d\geq 3, \\
		\frac{2}{1-s}<r<\infty &\text{if}& d=2, \\
		\frac{2}{1-2s}<r<\infty &\text{if} & d=1.
	\end{array}
	\right.
\end{align}
The set of all $\dot{H}^s$-admissible pairs is denoted by $\Ac_s$. Similarly, a pair $(q,r)$ is called $\dot{H}^{-s}$-admissible if 
\[
\frac{2}{q}+\frac{d}{r}=\frac{d}{2}+s
\]
and $r$ satisfies \eqref{cond-r}. We denote by $\Ac_{-s}$ the set of all $\dot{H}^{-s}$-admissible pairs. 
\begin{proposition}[Strichartz estimates \cite{Cazenave, KT, Fos, Gue}]
	Then for $d\geq 1$ and $0\leq s <\min\left\{1, \frac{d}{2}\right\}$, we have
	\[
	\|\Sc(t) f\|_{L^q_t(\Rb, L^r_x)} \lesssim \|f\|_{\dot{H}^s_x}
	\]
	for any $f\in \dot{H}^s(\Rb^d)$ and any $(q,r) \in \Ac_s$. Moreover, for any interval $I\subset \Rb$ containing $0$, there exists $C>0$ independent of $I$ such that 
	\[
	\left\|\int_0^t \Sc(t-\tau) F(\tau) \dd\tau\right\|_{L^q_t(I, L^r_x)} \lesssim \|F\|_{L^{a'}_t(I,L^{b'}_x)}
	\]
	for any $F\in L^{a'}_t(I, L^{b'}_x(\Rb^d))$, any $(q,r) \in \Ac_s$, and any $(a,b) \in \Ac_{-s}$. 
\end{proposition}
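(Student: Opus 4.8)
The plan is to reduce the entire proposition to two elementary facts already at hand: the dispersive bound \eqref{dis-est} and the $L^2$-isometry $\|\Sc(t)f\|_{L^2_x}=\|f\|_{L^2_x}$, which holds for all real $\beta_1,\beta_2$ because $\Sc(t)$ is a unimodular Fourier multiplier. Everything else is an assembly of standard tools---the Ginibre--Velo/Keel--Tao $TT^*$ argument, the Sobolev and Hardy--Littlewood--Sobolev inequalities, Foschi's off-diagonal inhomogeneous estimates, and the Christ--Kiselev lemma---so the only genuine work is bookkeeping with the exponents.

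For the homogeneous estimate I would first record the $L^2$-admissible case: feeding \eqref{dis-est} and the $L^2$-isometry into the $TT^*$ machinery yields $\|\Sc(t)f\|_{L^q_t(\Rb,L^r_x)}\lesssim\|f\|_{L^2_x}$ whenever $\frac{2}{q}+\frac{d}{r}=\frac{d}{2}$ with $r$ strictly between $2$ and $\frac{2d}{d-2}$ when $d\ge3$ (and in the analogous non-endpoint ranges in $d=1,2$); this is \cite{Cazenave,KT}. Now take $(q,r)\in\Ac_s$ and set $\frac{1}{r_1}=\frac{1}{r}+\frac{s}{d}$. A one-line computation gives $\frac{2}{q}+\frac{d}{r_1}=\frac{d}{2}$, while \eqref{cond-r} together with $s<1$ forces $2<r_1<\frac{2d}{d-2}$, so $(q,r_1)$ is $L^2$-admissible in the above sense. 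Writing $f=|\nabla|^{-s}g$ with $\|g\|_{L^2_x}=\|f\|_{\dot{H}^s_x}$, the Hardy--Littlewood--Sobolev (Sobolev) inequality $\||\nabla|^{-s}h\|_{L^r_x}\lesssim\|h\|_{L^{r_1}_x}$---valid precisely because $1<r_1<r<\infty$ and $0<s<\frac{d}{2}$---gives $\|\Sc(t)f\|_{L^r_x}\lesssim\|\Sc(t)g\|_{L^{r_1}_x}$ pointwise in $t$; taking the $L^q_t$ norm and applying the $L^2$-admissible estimate to $(q,r_1)$ closes the argument (the case $s=0$ being the classical one just recorded). This is the fractional-order statement of \cite{Gue}.

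For the inhomogeneous estimate I would first establish the full-line version $\bigl\|\int_{\Rb}\Sc(t-\tau)F(\tau)\dd\tau\bigr\|_{L^q_t(\Rb,L^r_x)}\lesssim\|F\|_{L^{a'}_t(\Rb,L^{b'}_x)}$ for $(q,r)\in\Ac_s$ and $(a,b)\in\Ac_{-s}$. This is one of Foschi's off-diagonal inhomogeneous Strichartz estimates \cite{Fos} (see also \cite{Gue}); its proof rests again on \eqref{dis-est} and the $L^2$-isometry combined with a Hardy--Littlewood--Sobolev inequality in the time variable, and one checks that the scaling and acceptability conditions required there are exactly those encoded in the definitions of $\Ac_s$, $\Ac_{-s}$ and in \eqref{cond-r}. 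To pass from $\int_{\Rb}$ to the retarded integral $\int_0^t$ over an arbitrary interval $I\ni0$ I would invoke the Christ--Kiselev lemma; this is legitimate because the input and output time exponents are strictly separated, $a'<q$, an inequality that reduces to $\frac{1}{r}+\frac{1}{b}>1-\frac{2}{d}$ and hence follows from the upper bounds $r,b<\frac{2d}{d-2}$ in \eqref{cond-r}. Since the full-line estimate has a constant independent of $I$ and Christ--Kiselev preserves this, the constant on $I$ is uniform, as claimed.

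I expect the only delicate point to be the inhomogeneous bound for non-dual, non-sharp-admissible pairs: unlike the homogeneous estimate it does not follow from $TT^*$ by duality alone, so one must either cite Foschi directly or redo the time-variable fractional-integration estimate by hand, taking care that every exponent sits strictly inside the ranges \eqref{cond-r} (so no endpoint phenomenon intervenes) and that $a'<q$ for the Christ--Kiselev step. All the remaining ingredients---the exponent identity $(q,r_1)\in\Ac_0$, the inequality $a'<q$, and the verification of the ranges---are routine.
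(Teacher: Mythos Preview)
The paper does not supply a proof of this proposition at all: it is stated as a known result with citations to \cite{Cazenave, KT, Fos, Gue} and used as a black box thereafter. Your sketch is a faithful outline of the standard route found in those references---reducing to the $L^2$-admissible case via the dispersive bound and $TT^*$, upgrading to $\dot H^s$ by Sobolev embedding on the space side, and invoking Foschi's off-diagonal inhomogeneous estimates together with Christ--Kiselev for the retarded integral---so there is nothing to compare against in the paper itself.
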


\subsection{Nonlinear estimates}
Let $\theta>0$ be a small parameter. We introduce the following exponents
\begin{align*}
	q&=\frac{4}{2-\theta}, &r&= \frac{2d}{d-2+\theta}, \\
	\overline{q}&=\frac{4(2-\theta)}{d-2+2\alpha-\theta(d-5+2\alpha)}, &\overline{r}&=\frac{2d(2-\theta)}{d+2-2\alpha -\theta(5-2\alpha)}, \\
	\overline{a}&=\frac{4(2-\theta)}{6-d-2\alpha+\theta}, &\tilde{a}&=\frac{2(3-\theta)}{2d-6+4\alpha-\theta(d-4+2\alpha)}, \\
	\hat{a}&=\frac{2(3-\theta)}{6-d-2\alpha}, &\hat{r}&=\frac{d(3-\theta)}{d-\alpha-\theta(2-\alpha)}, \\
	\overline{m}_\pm&=\frac{d}{2-\alpha \mp d\theta}. & & 
\end{align*}
\begin{lemma} \label{lem-expo}
	Let $2\leq d \leq 5$, $0<\alpha<2$, and $\frac{4-d}{2}<\alpha <\frac{6-d}{2}$. Then there exists $\theta>0$ sufficiently small so that
	\begin{align*}
		\left\{
		\renewcommand*{\arraystretch}{1.3}
		\begin{array}{l}
			(q,r), (\overline{q},\overline{r}) \in \Ac_0, \\
			(\overline{a}, \overline{r}), (\hat{a},\hat{r}) \in \Ac_{s_c}, \\
			(\tilde{a}, \hat{r}) \in \Ac_{-s_c}, \\
			2<\overline{m}_\pm <\frac{2d}{d-2},
		\end{array}
		\right.
	\end{align*}
	where $s_c$ is the critical Sobolev exponent given in \eqref{sc}. 
\end{lemma}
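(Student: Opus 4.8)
The plan is a perturbation argument off the endpoint value $\theta = 0$. First I would check that the five admissibility identities behind membership in $\Ac_0$, $\Ac_{s_c}$ and $\Ac_{-s_c}$ hold as algebraic identities in $\theta$ (for fixed $d,\alpha$):
\[
\frac{2}{q}+\frac{d}{r}=\frac{2}{\overline{q}}+\frac{d}{\overline{r}}=\frac{d}{2},\qquad \frac{2}{\overline{a}}+\frac{d}{\overline{r}}=\frac{2}{\hat{a}}+\frac{d}{\hat{r}}=\frac{d}{2}-s_c,\qquad \frac{2}{\tilde{a}}+\frac{d}{\hat{r}}=\frac{d}{2}+s_c .
\]
Each follows by a direct computation once one substitutes $s_c=\tfrac d2-2+\alpha$, so that $\tfrac d2-s_c=2-\alpha$ and $\tfrac d2+s_c=d-2+\alpha$: the exponents have been rigged so that, upon putting everything over the common denominator $2(2-\theta)$ (resp.\ $3-\theta$), the numerators collapse. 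For example $\tfrac{2}{\overline{a}}+\tfrac{d}{\overline{r}}=\tfrac{(6-d-2\alpha+\theta)+(d+2-2\alpha-\theta(5-2\alpha))}{2(2-\theta)}=\tfrac{2(2-\alpha)(2-\theta)}{2(2-\theta)}=2-\alpha$, and the remaining four are entirely analogous; in the write-up I would display one or two of them and leave the rest to the reader.

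Next I would verify the $r$-range conditions in \eqref{cond-r} and the two-sided bound on $\overline{m}_\pm$. Setting $\theta=0$ one gets $\tfrac1r=\tfrac{d-2}{2d}$, $\tfrac{1}{\overline{r}}=\tfrac{d+2-2\alpha}{4d}$, $\tfrac{1}{\hat{r}}=\tfrac{d-\alpha}{3d}$ and $\overline{m}_\pm=\tfrac{d}{2-\alpha}$. For $d\ge3$ the relevant ranges are open, namely $\tfrac{d-2}{2d}<\tfrac1r<\tfrac12$ for $\Ac_0$ and $\tfrac{d-2}{2d}<\tfrac1r<\tfrac{2-\alpha}{d}$ for $\Ac_{\pm s_c}$ (using $d-2s_c=4-2\alpha$); after clearing denominators the inclusions $\tfrac{d+2-2\alpha}{4d}$ and $\tfrac{d-\alpha}{3d}$ in $\big(\tfrac{d-2}{2d},\tfrac{2-\alpha}{d}\big)$, as well as $2<\tfrac{d}{2-\alpha}<\tfrac{2d}{d-2}$, all reduce to the standing hypothesis $\tfrac{4-d}{2}<\alpha<\tfrac{6-d}{2}$ and hold strictly (note also $\tfrac{2-\alpha}{d}<\tfrac12$ in this regime, so the $\Ac_{s_c}$-range lies inside the $\Ac_0$-range, which settles $\overline{r}$ simultaneously for $(\overline{q},\overline{r})$ and $(\overline{a},\overline{r})$). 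The lone exception is $(q,r)$: at $\theta=0$ one has $r=\tfrac{2d}{d-2}$, sitting exactly on the right endpoint of the $\Ac_0$-interval $\big(2,\tfrac{2d}{d-2}\big)$, so instead I would simply observe that $2<\tfrac{2d}{d-2+\theta}<\tfrac{2d}{d-2}$ for every $\theta\in(0,2)$, i.e.\ $(q,r)\in\Ac_0$ on the whole window $0<\theta<2$. The case $d=2$ is handled the same way using the alternative ranges $2<r<\infty$ and $\tfrac{2}{1-s_c}<r<\infty$ of \eqref{cond-r}, where $s_c=\alpha-1\in(0,1)$ and the required strict inequalities follow from $1<\alpha<2$.

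Finally, apart from $(q,r)$ itself, every exponent in the list is a rational function of $\theta$ whose denominator is nonzero at $\theta=0$ — here I would just record the elementary facts $2-\alpha>0$, $d-\alpha>0$, $d-2+2\alpha>0$, $6-d-2\alpha>0$, $2d-6+4\alpha>0$, each immediate from the hypotheses — hence continuous near $0$, so the strict inequalities of the previous step survive for $|\theta|$ small. Together with the explicit window $(0,2)$ for $(q,r)$, this yields a threshold $\theta_0\in(0,2)$ below which all four membership relations hold at once. There is no genuine obstacle in this argument; the only points that need a moment's care are the boundary behaviour of $(q,r)$ at $\theta=0$ (which is why we must take $\theta$ strictly positive, not merely small) and the separate bookkeeping for $d=2$, where \eqref{cond-r} has a different shape and the endpoints at infinity must be tracked.
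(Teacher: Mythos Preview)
Your proposal is correct and follows precisely the approach the paper has in mind: the paper's own proof is a single line (``straightforward computations'' plus the remark that the $r$-range condition \eqref{cond-r} follows from $\frac{4-d}{2}<\alpha<\frac{6-d}{2}$ for $\theta$ small), and you have simply written out those computations, including the one subtlety the paper glosses over --- that $(q,r)$ sits on the boundary of the $\Ac_0$-range at $\theta=0$ and requires $\theta>0$ strictly.
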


\begin{proof}
	The proof follows from straightforward computations. Note that the condition \eqref{cond-r} follows by taking $\theta>0$ sufficiently small and using $\frac{4-d}{2}<\alpha<\frac{6-d}{2}$. 
\end{proof}

\begin{lemma} \label{lem-non-est-1}
	Let $3\leq d\leq 5$, $0<\alpha<\min\left\{2,\frac{d}{2}\right\}$, $\frac{4-d}{2}<\alpha<\frac{6-d}{2}$, and $I\subset \Rb$ be an interval. Let $\theta>0$ be a small parameter as in Lemma \ref{lem-expo}. Then we have
	\begin{align}
		\||x|^{-\alpha} \overline{u} v\|_{L^{\tilde{a}'}_t(I,L^{\hat{r}'}_x)} &\lesssim \|u\|^\theta_{L^\infty_t(I,H^1_x)} \|u\|^{1-\theta}_{L^{\hat{a}}_t(I, L^{\hat{r}}_x)} \|v\|_{L^{\hat{a}}_t (I,L^{\hat{r}}_x)}, \label{non-est-1} \\
		\||x|^{-\alpha} \overline{u} v\|_{L^{q'}_t(I,L^{r'}_x)} &\lesssim \|u\|^\theta_{L^\infty_t(I,H^1_x)} \|u\|^{1-\theta}_{L^{\overline{a}}_t(I, L^{\overline{r}}_x)} \|v\|_{L^{\overline{q}}_t (I,L^{\overline{r}}_x)}, \label{non-est-2} \\
		\|\nabla(|x|^{-\alpha} \overline{u} v)\|_{L^{q'}_t(I,L^{r'}_x)} &\lesssim \|u\|^\theta_{L^\infty_t(I,H^1_x)} \|u\|^{1-\theta}_{L^{\overline{a}}_t(I,L^{\overline{r}}_x)} \|\nabla v\|_{L^{\overline{q}}_t(I,L^{\overline{r}}_x)} + \|v\|^\theta_{L^\infty_t(I,H^1_x)} \|v\|^{1-\theta}_{L^{\overline{a}}_t(I,L^{\overline{r}}_x)} \|\nabla u\|_{L^{\overline{q}}_t(I,L^{\overline{r}}_x)}. \label{non-est-3}
	\end{align}
\end{lemma}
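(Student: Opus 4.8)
The plan is to prove all three estimates by a single scheme: split the singular weight as $|x|^{-\alpha}=|x|^{-\alpha}\mathds{1}_{\{|x|\leq 1\}}+|x|^{-\alpha}\mathds{1}_{\{|x|>1\}}$, estimate each piece by H\"older's inequality in the space variable (placing the weight in $L^{\gamma}$ with $\gamma$ just below $d/\alpha$ on $\{|x|\leq1\}$, where it is locally integrable, and with $\gamma$ just above $d/\alpha$ on $\{|x|>1\}$, where it decays fast enough to be integrable), distribute $u$, $v$ (and, for \eqref{non-est-3}, $\nabla u$ or $\nabla v$) into Lebesgue spaces, recover a power $\theta$ of an $H^1$-norm by interpolating one Lebesgue norm of $u$ between $L^{\hat r}$ (resp.\ $L^{\overline r}$) and a Sobolev endpoint $L^{p}$, $p\in[2,\tfrac{2d}{d-2})$, via $\|u\|_{L^{p}_x}\lesssim\|u\|_{H^1_x}^{\theta}\|u\|_{L^{\hat r}_x}^{1-\theta}$, and finally apply H\"older in the time variable. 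Because the interpolation routes $\theta$-worth of $u$ into $L^\infty_t H^1_x$, that factor costs nothing in the time budget, so it all comes down to checking that the various H\"older exponents add up correctly. A small parameter $\theta$ has to be introduced precisely so that all these exponents stay in admissible ranges, and this is exactly the content of Lemma~\ref{lem-expo}: for $\theta$ small the relevant space-time pairs belong to $\Ac_0$, $\Ac_{s_c}$, or $\Ac_{-s_c}$ (with $s_c$ as in \eqref{sc}), while the defining formulas for $\hat r$ and $\overline r$ are such that the weight-exponent $\gamma$ forced by H\"older tends to $d/\alpha$ as $\theta\to0$ and hence can be taken on either side of $d/\alpha$ as needed.

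For \eqref{non-est-1} and \eqref{non-est-2} I would, for fixed $t\in I$, apply H\"older in $x$ on each of $\{|x|\leq1\}$ and $\{|x|>1\}$ to get $\||x|^{-\alpha}\overline uv\|_{L^{\hat r'}_x}\lesssim(\|u\|_{L^{p_{-}}_x}+\|u\|_{L^{p_{+}}_x})\|v\|_{L^{\hat r}_x}$ with Lebesgue exponents $p_{\pm}\in[2,\tfrac{2d}{d-2})$ coming from the two regions, then interpolate $\|u\|_{L^{p_{\pm}}_x}\lesssim\|u\|_{H^1_x}^{\theta}\|u\|_{L^{\hat r}_x}^{1-\theta}$ to obtain, pointwise in $t$, $\||x|^{-\alpha}\overline uv\|_{L^{\hat r'}_x}\lesssim\|u(t)\|_{H^1_x}^{\theta}\|u(t)\|_{L^{\hat r}_x}^{1-\theta}\|v(t)\|_{L^{\hat r}_x}$, and conclude with H\"older in time using $\tfrac1{\tilde a'}=\tfrac{1-\theta}{\hat a}+\tfrac1{\hat a}$. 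Estimate \eqref{non-est-2} is identical after replacing $\hat r,\hat a,\tilde a$ by $\overline r,\overline a,\overline q$, the time balance then reading $\tfrac1{q'}=\tfrac{1-\theta}{\overline a}+\tfrac1{\overline q}$; both identities are verified by plugging in the definitions of the exponents.

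For \eqref{non-est-3} the plan is to start from the product rule, writing $\nabla(|x|^{-\alpha}\overline uv)$ as the sum of $|x|^{-\alpha}(\nabla\overline u)\,v$, $|x|^{-\alpha}\,\overline u\,(\nabla v)$, and $\nabla(|x|^{-\alpha})\,\overline u\,v$, the last term having modulus $\alpha|x|^{-\alpha-1}|u||v|$. The first two terms are estimated exactly as \eqref{non-est-2} with $\nabla u$ (resp.\ $\nabla v$) in place of $u$ (resp.\ $v$), producing the two summands on the right-hand side of \eqref{non-est-3}. For the third term, which carries the extra factor $|x|^{-1}$ but no derivative, I would absorb $|x|^{-1}$ into one of the factors, say $v$, and trade it for a gradient by a Hardy--Sobolev (Caffarelli--Kohn--Nirenberg) inequality of the form $\||x|^{-1}v\|_{L^{m}_x}\lesssim\|\nabla v\|_{L^{\overline r}_x}$ for an appropriately chosen $m$, then run the same H\"older/interpolation argument as above on $|x|^{-\alpha}\cdot u\cdot(|x|^{-1}v)$; symmetrising in $u$ and $v$ reproduces both summands of \eqref{non-est-3}. (One also needs $\alpha+1<d$ so that $|x|^{-\alpha-1}$ is locally integrable to the required power on $\{|x|\leq1\}$.)

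The main obstacle I anticipate is concentrated in \eqref{non-est-3}. First, the Hardy--Sobolev trade has to be arranged so that the factor it produces is exactly $\|\nabla v\|_{L^{\overline r}_x}$ rather than $\|v\|_{H^1_x}$, while the $H^1$-norm of $u$ still appears only to the power $\theta$; this forces an additional interpolation and a delicate choice of the intermediate exponent $m$. Second, one must juggle all constraints at once --- the weight integrable on each of $\{|x|\leq1\}$ and $\{|x|>1\}$, the interpolation exponent for the $H^1$-power inside $[2,\tfrac{2d}{d-2})$, the Hardy--Sobolev inequality actually applicable, and the resulting space-time pairs equal to those in Lemma~\ref{lem-expo}. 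It is this last cluster of conditions, in particular the validity of the Hardy--Sobolev step, that genuinely needs $d\geq3$ (equivalently $\alpha<d/2$), which is why the scattering theorem, and hence this lemma, is not stated in two dimensions. Once the exponents are frozen as in Lemma~\ref{lem-expo}, verifying \eqref{non-est-1}--\eqref{non-est-3} is a routine, if lengthy, computation.
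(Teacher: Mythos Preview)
Your scheme for \eqref{non-est-1} and \eqref{non-est-2} is essentially the paper's: split $|x|^{-\alpha}$ near and far from the origin, take $\tfrac{1}{\gamma}=\tfrac{\alpha}{d}\pm\theta^2$ so the weight lies in $L^\gamma$ on each piece, H\"older the product into $\|u\|^\theta_{L^{\overline m_\pm}}\|u\|^{1-\theta}_{L^{\hat r}}\|v\|_{L^{\hat r}}$ (resp.\ with $\overline r$), embed $L^{\overline m_\pm}\supset H^1$, and finish with the time identities $\tfrac{1}{\tilde a'}=\tfrac{2-\theta}{\hat a}$ and $\tfrac{1}{q'}=\tfrac{1-\theta}{\overline a}+\tfrac{1}{\overline q}$, which you verified correctly.

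The gap is in your treatment of the singular term in \eqref{non-est-3}. You propose to absorb $|x|^{-1}$ into a single factor and use $\||x|^{-1}v\|_{L^m}\lesssim\|\nabla v\|_{L^{\overline r}}$. By scaling, a homogeneous Hardy--Sobolev inequality of this form forces $m=\overline r$, and the classical Hardy inequality then requires $\overline r<d$. But $\overline r\to\tfrac{4d}{d+2-2\alpha}$ as $\theta\to0$, so $\overline r<d$ is equivalent to $\alpha<\tfrac{d-2}{2}$, which is \emph{incompatible} with the hypothesis $\alpha>\tfrac{4-d}{2}$ when $d=3$ (there one needs $\alpha\in(\tfrac12,\tfrac32)$, yet your route demands $\alpha<\tfrac12$). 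Replacing Hardy by a Sobolev embedding $\|v\|_{L^{\overline r^*}}\lesssim\|\nabla v\|_{L^{\overline r}}$ runs into the same constraint $\overline r<d$.

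The paper avoids this by applying Hardy to the \emph{product}: after H\"older with $\tfrac{1}{r'}=\tfrac{1}{\gamma}+\tfrac{1}{\rho}$, it uses
\[
\||x|^{-1}(\overline uv)\|_{L^\rho_x}\;\lesssim\;\|\nabla(\overline uv)\|_{L^\rho_x}\;\leq\;\|\overline u\,\nabla v\|_{L^\rho_x}+\|v\,\nabla\overline u\|_{L^\rho_x},
\]
valid for $1<\rho<d$. Since $\rho\to\tfrac{2d}{d+2-2\alpha}=\tfrac{\overline r}{2}$ as $\theta\to0$, the condition $\rho<d$ is exactly $\alpha<\tfrac{d}{2}$, matching the lemma's hypothesis. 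After this step the singular term has been converted into the same two expressions $|x|^{-\alpha}\overline u\,\nabla v$ and $|x|^{-\alpha}v\,\nabla\overline u$ that your product rule already produced, and the remaining H\"older/interpolation is exactly as you described. So the only missing idea is: trade $|x|^{-1}$ for a gradient on $\overline uv$ jointly, not on a single factor. Your parenthetical condition $\alpha+1<d$ is then irrelevant --- the weight you place in $L^\gamma$ is still $|x|^{-\alpha}$, not $|x|^{-\alpha-1}$.
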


\begin{proof}
	We follow an argument of \cite{Campos, DK-NA}. By H\"older's  inequality, we have
	\[
	\||x|^{-\alpha}\overline{u} v\|_{L^{\hat{r}'}_x} \leq \||x|^{-\alpha}\|_{L^\gamma_x(A)} \|\overline{u} v\|_{L^\rho_x}
	\]
	provided that $\gamma, \rho>1$ is such that $\frac{1}{\hat{r}'}=\frac{1}{\gamma}+\frac{1}{\rho}$, where $A$ is either $B_1=B(0,1)$ the unit ball or $B_1^c=\Rb^d\backslash B_1$. To ensure $\||x|^{-\alpha}\|_{L^\gamma_x(A)}<\infty$, we take
	\[
	\frac{1}{\gamma} =\frac{\alpha}{d} \pm \theta^2
	\]
	with the plus sign for $A=B_1$ and the minus sign for $A=B_1^c$. Using the fact that
	\[
	\frac{1}{\rho}=\frac{1}{\hat{r}'}-\frac{1}{\gamma} = \frac{2d-2\alpha-(d-2)\theta}{d(3-\theta)} \mp \theta^2 =\frac{\theta}{\overline{m}_\pm} + \frac{2-\theta}{\hat{r}},
	\]
	we have
	\[
	\||x|^{-\alpha}\overline{u} v\|_{L^{\hat{r}'}_x} \lesssim \|u\|^\theta_{L^{\overline{m}_\pm}_x} \|u\|^{1-\theta}_{L^{\hat{r}}_x} \|v\|_{L^{\hat{r}}_x}.
	\]
	As $\frac{1}{\tilde{a}'} = \frac{2-\theta}{\hat{a}}$, H\"older's  inequality and Sobolev embedding with $2<\overline{m}_\pm<\frac{2d}{d-2}$  yield
	\begin{align*}
		\||x|^{-\alpha}\overline{u} v\|_{L^{\tilde{a}'}_t(I,L^{\hat{r}'}_x)} &\lesssim \|u\|^\theta_{L^\infty_t(I,L^{\overline{m}_\pm}_x)} \|u\|^{1-\theta}_{L^{\hat{a}}_t(I,L^{\hat{r}}_x)} \|v\|_{L^{\hat{a}}_t(I,L^{\hat{r}}_x)} \\
		&\lesssim \|u\|^\theta_{L^\infty_t(I,H^1_x)} \|u\|^{1-\theta}_{L^{\hat{a}}_t(I,L^{\hat{r}}_x)} \|v\|_{L^{\hat{a}}_t(I,L^{\hat{r}}_x)}.
	\end{align*}
	This shows \eqref{non-est-1}. We also have the following estimate which will be useful later
	\begin{align} \label{est-A}
		\||x|^{-\alpha} \overline{u} v\|_{L^{\hat{r}'}_x} \leq \||x|^{-\alpha}\|_{L^\gamma_x(A)} \|u\|^\theta_{L^{\overline{m}_\pm}_x} \|u\|^{1-\theta}_{L^{\hat{r}}_x} \|v\|_{L^{\hat{r}}_x}.
	\end{align}
	
	We next prove \eqref{non-est-3}. We have
	\[
	|\nabla(|x|^{-\alpha}\overline{u}v)| \leq |x|^{-\alpha} |\nabla(\overline{u}v)| + \alpha|x|^{-\alpha} ||x|^{-1}(\overline{u}v)|,
	\]
	hence
	\[
	\|\nabla(|x|^{-\alpha}\overline{u}v)\|_{L^{r'}_x} \leq \||x|^{-\alpha} \nabla(\overline{u}v)\|_{L^{r'}_x} + \alpha\||x|^{-\alpha}|x|^{-1}(\overline{u} v)\|_{L^{r'}_x}.
	\]
	We estimate
	\[
	\||x|^{-\alpha} \nabla (\overline{u}v)\|_{L^{r'}_x} \leq \||x|^{-\alpha}\|_{L^\gamma_x(A)} \|\nabla (\overline{u} v)\|_{L^\rho_x} 
	\]
	provided that $\gamma,\rho>1$ and $\frac{1}{r'}=\frac{1}{\gamma}+\frac{1}{\rho}$. To make $\||x|^{-\alpha}\|_{L^\gamma_x(A)} <\infty$, we take $\frac{1}{\gamma}=\frac{\alpha}{d}\pm \theta^2$ as before. In particular, we have
	\[
	\frac{1}{\rho}=\frac{1}{r'}-\frac{1}{\gamma}=\frac{d+2-2\alpha-\theta}{2d} \mp \theta^2.
	\]
	Similarly, we have
	\[
	\||x|^{-\alpha} |x|^{-1}(\overline{u}v)\|_{L^{r'}_x} \leq \||x|^{-\alpha}\|_{L^\gamma_x(A)} \||x|^{-1}(\overline{u} v)\|_{L^\rho_x} \lesssim \||x|^{-1}(\overline{u} v)\|_{L^\rho_x}.
	\]
	As $\alpha<\frac{d}{2}$, we see that $1<\rho<d$ by taking $\theta>0$ sufficiently small. Applying the following Hardy inequality (see e.g., \cite{OK}): for $1<\rho<d$,
	\[
	\||x|^{-1} f\|_{L^\rho_x} \leq \frac{\rho}{d-\rho} \|\nabla f\|_{L^\rho_x},
	\]
	we get
	\[
	\||x|^{-\alpha} |x|^{-1}(\overline{u}v)\|_{L^{r'}_x} \lesssim \|\nabla(\overline{u} v)\|_{L^\rho_x}.
	\]
	In particular, we obtain
	\begin{align*}
		\|\nabla(|x|^{-\alpha}\overline{u}v)\|_{L^{r'}_x} &\lesssim \|\nabla (\overline{u} v)\|_{L^\rho_x} \\
		&\lesssim \|\overline{u} \nabla v\|_{L^\rho_x} + \|v\nabla \overline{u}\|_{L^\rho_x} \\
		&\lesssim \|u\|^\theta_{L^{\overline{m}_\pm}_x} \|u\|^{1-\theta}_{L^{\overline{r}}_x} \|\nabla v\|_{L^{\overline{r}}_x} + \|v\|^\theta_{L^{\overline{m}_\pm}_x} \|v\|^{1-\theta}_{L^{\overline{r}}_x} \|\nabla u\|_{L^{\overline{r}}_x}, 
	\end{align*}
	where we have used the fact that $\frac{1}{\rho}=\frac{\theta}{\overline{m}_\pm}+\frac{2-\theta}{\overline{r}}$. Another application of the H\"older  inequality with $\frac{1}{q'}=\frac{1-\theta}{\overline{a}} +\frac{1}{\overline{q}}$ and Sobolev embedding  yields
	\begin{align*}
		\|\nabla(|x|^{-\alpha}\overline{u}v)\|_{L^{q'}_t(I,L^{r'}_x)} &\lesssim \|u\|^\theta_{L^\infty_t(I,L^{\overline{m}_\pm}_x)} \|u\|^{1-\theta}_{L^{\overline{a}}_t(I,L^{\overline{r}}_x)} \|\nabla v\|_{L^{\overline{q}}_t(I,L^{\overline{r}}_x)} + \|v\|^\theta_{L^\infty_t(I,L^{\overline{m}_\pm}_x)} \|v\|^{1-\theta}_{L^{\overline{a}}_t(I,L^{\overline{r}}_x)} \|\nabla u\|_{L^{\overline{q}}_t(I,L^{\overline{r}}_x)} \\
		&\lesssim  \|u\|^\theta_{L^\infty_t(I,H^1_x)} \|u\|^{1-\theta}_{L^{\overline{a}}_t(I,L^{\overline{r}}_x)} \|\nabla v\|_{L^{\overline{q}}_t(I,L^{\overline{r}}_x)} + \|v\|^\theta_{L^\infty_t(I,H^1_x)} \|v\|^{1-\theta}_{L^{\overline{a}}_t(I,L^{\overline{r}}_x)} \|\nabla u\|_{L^{\overline{q}}_t(I,L^{\overline{r}}_x)}
	\end{align*}
	which is \eqref{non-est-3}. The estimate \eqref{non-est-2} is treated similarly (even simpler) as for \eqref{non-est-3}. The proof is complete.
\end{proof}

\begin{lemma} \label{lem-non-est-2}
	Let $3\leq d\leq 5$, $0<\alpha<2$, $\frac{4-d}{2}<\alpha<\frac{6-d}{2}$, and $\frac{2d}{d+4} <m <\frac{2d}{d+2}$, we have
	\begin{align} \label{non-est-4}
		\||x|^{-\alpha} \overline{u} v\|_{L^m_x} \lesssim \|u\|_{H^1_x}\|v\|_{H^1_x}.
	\end{align}
\end{lemma}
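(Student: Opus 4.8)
The plan is to split $|x|^{-\alpha}$ according to the regions $B_1=\{|x|<1\}$ and $B_1^c$, estimate each piece by H\"older's inequality with an appropriate exponent for the singular weight, and then absorb the resulting Lebesgue norms of $u$ and $v$ into $H^1_x$ via Sobolev embedding (and, near the origin, the Hardy inequality if needed). First I would write
\[
\||x|^{-\alpha}\overline{u}v\|_{L^m_x} \leq \||x|^{-\alpha}\overline{u}v\|_{L^m_x(B_1)} + \||x|^{-\alpha}\overline{u}v\|_{L^m_x(B_1^c)}.
\]
For the region $B_1^c$, since $|x|^{-\alpha}\leq 1$ there, we simply have $\||x|^{-\alpha}\overline{u}v\|_{L^m_x(B_1^c)}\leq \|\overline{u}v\|_{L^m_x}\leq \|u\|_{L^{2m}_x}\|v\|_{L^{2m}_x}$, and the condition $\frac{2d}{d+4}<m<\frac{2d}{d+2}$ guarantees $2<2m<\frac{2d}{d-2}$, so Sobolev embedding $H^1(\Rb^d)\subset L^{2m}(\Rb^d)$ finishes this piece.

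For the region $B_1$, I would apply H\"older with exponents $\gamma$ and $\rho$ satisfying $\frac{1}{m}=\frac{1}{\gamma}+\frac{1}{\rho}$, choosing $\frac{1}{\gamma}=\frac{\alpha}{d}+\varepsilon$ for a small $\varepsilon>0$ so that $\||x|^{-\alpha}\|_{L^\gamma_x(B_1)}<\infty$. This forces $\frac{1}{\rho}=\frac{1}{m}-\frac{\alpha}{d}-\varepsilon$, and one then writes $\|\overline{u}v\|_{L^\rho_x}\leq \|u\|_{L^{2\rho}_x}\|v\|_{L^{2\rho}_x}$. The task is to check that $2\leq 2\rho\leq \frac{2d}{d-2}$ (with strict inequalities away from endpoints) for $\varepsilon$ small, which is where the precise range $\frac{2d}{d+4}<m<\frac{2d}{d+2}$ and the constraint $\alpha<\frac{6-d}{2}$ enter: a direct computation shows $\frac{1}{2\rho}=\frac{1}{2m}-\frac{\alpha}{2d}-\frac{\varepsilon}{2}$ lies strictly between $\frac{d-2}{2d}$ and $\frac12$ precisely under these hypotheses. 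Then Sobolev embedding again bounds everything by $\|u\|_{H^1_x}\|v\|_{H^1_x}$.

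The main (minor) obstacle is purely bookkeeping: verifying that the exponent $\rho$ produced by the choice $\frac{1}{\gamma}=\frac{\alpha}{d}+\varepsilon$ stays in the Sobolev-admissible range $[2,\frac{2d}{d-2}]$ uniformly for small $\varepsilon$, simultaneously in dimensions $d=3,4,5$ and across the full $\alpha$-range $\frac{4-d}{2}<\alpha<\frac{6-d}{2}$. There is no analytic difficulty beyond this; the weight is locally integrable to a sufficiently high power because $\alpha<d$, and the decay at infinity is trivial since $|x|^{-\alpha}$ is bounded there. (Alternatively, one may absorb the $B_1$ piece by first writing $|x|^{-\alpha}=|x|^{-\alpha}\cdot 1$ and using the Hardy--Sobolev inequality $\||x|^{-\alpha}f\|_{L^m_x}\lesssim\|f\|_{\dot H^{s}}$ for suitable $s<1$, but the elementary H\"older splitting above is cleaner and self-contained.)
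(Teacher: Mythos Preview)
Your overall strategy---split into $B_1$ and $B_1^c$, apply H\"older with $\frac{1}{\gamma}=\frac{\alpha}{d}\pm\varepsilon$, then Sobolev embedding---is exactly the paper's approach. However, your treatment of $B_1^c$ has a small gap: you claim that $\frac{2d}{d+4}<m$ forces $2m>2$, but for $d=3$ the lower endpoint is $\frac{6}{7}<1$, so $m$ may be less than $1$ and then $2m<2$. In that regime $H^1(\Rb^3)\not\hookrightarrow L^{2m}(\Rb^3)$ (no decay at infinity), and the trivial bound $|x|^{-\alpha}\leq 1$ on $B_1^c$ is not enough.

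The fix is immediate and is what the paper does: on $B_1^c$ do not throw away the decay of $|x|^{-\alpha}$ but instead take $\frac{1}{\gamma}=\frac{\alpha}{d}-\varepsilon$ (so that $\||x|^{-\alpha}\|_{L^\gamma(B_1^c)}<\infty$), giving $\frac{1}{\rho}=\frac{1}{m}-\frac{\alpha}{d}+\varepsilon$. Then the condition $2\rho\geq 2$ becomes $m\geq\frac{d}{d+\alpha-d\varepsilon}$, and since $\alpha>\frac{4-d}{2}$ implies $\frac{d}{d+\alpha}<\frac{2d}{d+4}$, this holds for $\varepsilon$ small. Your verification of the $B_1$ piece and of the upper bound $2\rho\leq\frac{2d}{d-2}$ (using $\alpha<\frac{6-d}{2}$) is correct; the paper carries out the same check, arriving at the sufficient range $\frac{d}{d+\alpha}<m<\frac{d}{d-2+\alpha}$, which contains $\big(\frac{2d}{d+4},\frac{2d}{d+2}\big)$ precisely because $\frac{4-d}{2}<\alpha<\frac{6-d}{2}$.
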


\begin{proof}
	We estimate
	\begin{align} \label{est-theta}
		\||x|^{-\alpha} \overline{u} v\|_{L^m_x} \leq \||x|^{-\alpha}\|_{L^\gamma_x(A)} \|\overline{u} v\|_{L^\rho_x} \lesssim \|u\|_{L^{2\rho}_x} \|v\|_{L^{2\rho}_x} \lesssim \|u\|_{H^1_x} \|v\|_{H^1_x}
	\end{align}
	provided that $\gamma,\rho>1$, $\frac{1}{m}=\frac{1}{\gamma}+\frac{1}{\rho}$, $\||x|^{-\alpha}\|_{L^\gamma_x(A)} <\infty$, and $2\rho \in \left[2,\frac{2d}{d-2} \right]$, where $A=B_1$ or $B_1^c$. To make $\||x|^{-\alpha}\|_{L^\gamma_x(A)} <\infty$, we take $\gamma>1$ so that
	\[
	\frac{1}{\gamma}=\frac{\alpha}{d}\pm \theta^2
	\]
	with the plus sign for $A=B_1$ and the minus sign for $A=B_1^c$. It follows that
	\[
	\rho = \frac{dm}{d-\alpha m \mp \theta^2 d m}.
	\]
	For $\theta>0$ sufficiently small, the condition $2\rho \in \left[2,\frac{2d}{d-2}\right]$ is fulfilled provided that $\frac{d}{d+\alpha}<m<\frac{d}{d-2+\alpha}$. As $\frac{4-d}{2}<\alpha<\frac{6-d}{2}$, we infer that
	\[
	\frac{d}{d+\alpha} <\frac{2d}{d+4}, \quad \frac{2d}{d+2}<\frac{d}{d-2+\alpha}.
	\]
	Thus for $\frac{2d}{d+4}<m<\frac{2d}{d+2}$, we can choose $\theta>0$ sufficiently small so that \eqref{est-theta} holds. 
\end{proof}

\subsection{Scattering criterion}

Let us start with the following small data scattering which follows from the nonlinear estimates \eqref{non-est-1}--\eqref{non-est-3}. Since the proof is standard, we omit it. 

\begin{lemma}[Small data scattering] \label{lem-small-scat}
	Let $3\leq d\leq 5$, $0<\alpha<\min\left\{2,\frac{d}{2}\right\}$, $\frac{4-d}{2}<\alpha<\frac{6-d}{2}$, $\kappa>0$, and $\gamma \in \Rb$. Suppose that $(u,v)$ is a global $\Hc^1$-solution to \eqref{INLS} satisfying
	\begin{align} \label{E}
		\sup_{t\in \Rb} \|(u(t),v(t))\|_{\Hc^1} \leq E
	\end{align}
	for some constant $E>0$. Then there exists $\delta=\delta(E)>0$ sufficiently small such that if
	\begin{align} \label{small-cond-scat}
		\|(\Sc_1(t-T) u(T),\Sc_2(t-T) v(T))\|_{\Lc^{\overline{a}}([T,+\infty), \Lc^{\overline{r}}) \cap  \Lc^{\hat{a}}([T,+\infty), \Lc^{\hat{r}})} <\delta
	\end{align}
	for some $T>0$, then the solution scatters in $\Hc^1$ forward in time.	A similar statement holds for the negative time direction.
\end{lemma}

\begin{lemma}[Scattering criterion] \label{lem-scat-crite}
	Let $3\leq d\leq 5$, $0<\alpha<\min\left\{2,\frac{d}{2}\right\}$, $\frac{4-d}{2}<\alpha<\frac{6-d}{2}$, $\kappa>0$, and $\gamma \in \Rb$. Suppose that $(u,v)$ is a global $\Hc^1$-solution to \eqref{INLS} satisfying \eqref{E} for some constant $E>0$. Then there exist $\vareps=\vareps(E)>0$ and $R=R(E)>0$ such that if
	\begin{align} \label{scat-cond}
		\liminf_{t\rightarrow +\infty} \int_{|x| \leq R} |u(t,x)|^2 +2|v(t,x)|^2 \dd x \leq \vareps^2,
	\end{align}
	then the solution scatters in $\Hc^1$ forward in time. A similar statement holds for the negative time direction.
\end{lemma}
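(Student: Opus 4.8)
The plan is to reduce matters to the small‑data scattering criterion of Lemma~\ref{lem-small-scat} and to verify its hypothesis \eqref{small-cond-scat} at a suitably chosen time $T$, by decomposing the Duhamel formula into a free evolution, a ``distant past'' term, and a ``recent past'' term. Let $\delta=\delta(E)>0$ be the constant furnished by Lemma~\ref{lem-small-scat}, fix a small parameter $p>0$ to be specified at the very end, and set $\beta:=\vareps^{-p}$ and $R=R(E):=\vareps^{-p-2}E^2$, where $\vareps=\vareps(E)>0$ will be taken small. Using \eqref{scat-cond} together with, for the free evolution, the Strichartz estimates and dominated convergence (recall $(\overline{a},\overline{r}),(\hat{a},\hat{r})\in\Ac_{s_c}$, $0<s_c<1$, and $u_0,v_0\in H^1\hookrightarrow\dot H^{s_c}$), we may pick $T>0$ arbitrarily large so that $\int_{|x|\le R}(|u(T)|^2+2|v(T)|^2)\dd x\le\vareps^2$ and, simultaneously, $\|(\Sc_1(t)u_0,\Sc_2(t)v_0)\|_{\Lc^{\overline{a}}([T,\infty),\Lc^{\overline{r}})\cap\Lc^{\hat{a}}([T,\infty),\Lc^{\hat{r}})}<\delta/3$. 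Writing the Duhamel formula from $t=0$ and restricting to $t\ge T$,
\begin{align*}
\big(\Sc_1(t-T)u(T),\Sc_2(t-T)v(T)\big)&=\big(\Sc_1(t)u_0,\Sc_2(t)v_0\big)+\ic\int_0^{T-\beta}\big(\Sc_1(t-s)|x|^{-\alpha}\overline u(s)v(s),\ \Sc_2(t-s)|x|^{-\alpha}u^2(s)\big)\dd s\\
&\quad+\ic\int_{T-\beta}^{T}\big(\Sc_1(t-s)|x|^{-\alpha}\overline u(s)v(s),\ \Sc_2(t-s)|x|^{-\alpha}u^2(s)\big)\dd s,
\end{align*}
so it remains to bound the two nonlinear integrals by $\delta/3$ each in $\Lc^{\overline{a}}([T,\infty),\Lc^{\overline{r}})\cap\Lc^{\hat{a}}([T,\infty),\Lc^{\hat{r}})$.

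For the distant‑past term, $t-s\ge\beta$ whenever $t\ge T$ and $0\le s\le T-\beta$, so I would use a decaying inhomogeneous Strichartz estimate (of Kato type, in the spirit of \cite{Murphy}) combined with the uniform bound $\||x|^{-\alpha}\overline u(s)v(s)\|_{L^m_x}\lesssim\|u(s)\|_{H^1_x}\|v(s)\|_{H^1_x}\lesssim E^2$ from Lemma~\ref{lem-non-est-2}, valid for $m\in(\tfrac{2d}{d+4},\tfrac{2d}{d+2})$ (an admissible choice by Lemma~\ref{lem-expo}, for which \eqref{dis-est} moreover gives an integrable time‑decay rate). This bounds the distant‑past term by $C(E)\,\beta^{-c}=C(E)\,\vareps^{pc}$ for some $c=c(d,\alpha)>0$, hence below $\delta/3$ once $\vareps$ is small.

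For the recent‑past term I would first propagate the near‑origin smallness across $J:=[T-\beta,T]$ via the local conservation law of the combined mass density: by Lemma~\ref{lem-loca-viri-iden}, for a cutoff $\phi_R$ with $\supp\phi_R\subset\{|x|\le R\}$ and $\phi_R\equiv1$ on $\{|x|\le R/2\}$,
\[
\Big|\frac{\dd}{\dd t}\int_{\Rb^d}\phi_R\big(|u|^2+2|v|^2\big)\dd x\Big|=\Big|\ima\int_{\Rb^d}\nabla\phi_R\cdot\big(\nabla u\,\overline u+2\kappa\,\nabla v\,\overline v\big)\dd x\Big|\lesssim\|\nabla\phi_R\|_{L^\infty}E^2\lesssim R^{-1}E^2,
\]
so that, integrating over $[s,T]\subset J$ and using the choice of $R$, $\int_{|x|\le R/2}(|u(s)|^2+2|v(s)|^2)\dd x\lesssim\vareps^2+\beta R^{-1}E^2\lesssim\vareps^2$ for every $s\in J$. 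By the inhomogeneous Strichartz estimate (pairing $\Ac_{s_c}$ with $\Ac_{-s_c}$ as in the proof of Lemma~\ref{lem-small-scat}), the recent‑past term is controlled by $\||x|^{-\alpha}\overline u v\|_{L^{\tilde{a}'}_t(J,L^{\hat{r}'}_x)}+\||x|^{-\alpha}u^2\|_{L^{\tilde{a}'}_t(J,L^{\hat{r}'}_x)}$, and I split $\Rb^d=\{|x|>R/2\}\cup\{|x|\le R/2\}$. On $\{|x|>R/2\}$ one gains $\|\mathds{1}_{|x|>R/2}|x|^{-\alpha}\|_{L^\gamma_x}\lesssim R^{-\alpha'}$ for a suitable $\gamma>d/\alpha$ and $\alpha'>0$, and together with the crude finite‑interval bounds $\|u\|_{L^{\hat{a}}_t(J,L^{\hat{r}}_x)}\lesssim|J|^{1/\hat{a}}E$ (using $H^1\hookrightarrow L^{\hat{r}}$, valid since $\alpha<\tfrac{6-d}{2}$) and the pointwise $\Hc^1$ bound, following the proof of Lemma~\ref{lem-non-est-1} gives a contribution $\lesssim R^{-\alpha'}\beta^{(2-\theta)/\hat{a}}E^2$. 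On $\{|x|\le R/2\}$ one only has $\|\mathds{1}_{|x|\le R/2}|x|^{-\alpha}\|_{L^\gamma_x}\lesssim R^{d/\gamma-\alpha}$, but one compensates by interpolating the factors of $u,v$ between their now‑small $L^2(\{|x|\le R/2\})$ norm and their bounded $L^{2d/(d-2)}$ norm, $\|u(s)\|_{L^{2\rho}_x(|x|\le R/2)}\lesssim\vareps^{1-\vartheta}E^\vartheta$ with $\rho\in(\tfrac{3d}{2d+\alpha},\tfrac{d}{d-2}]$ chosen near the lower endpoint, which yields a contribution $\lesssim R^{d/\gamma-\alpha}\beta^{1/\tilde{a}'}\vareps^{2(1-\vartheta)}E^{2\vartheta}$. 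Recalling $\beta=\vareps^{-p}$ and $R=\vareps^{-p-2}E^2$, each of these is $C(E)$ times a positive power of $\vareps$ provided $p$ is chosen small enough, hence both are $<\delta/3$ for $\vareps$ small. Summing the three pieces gives \eqref{small-cond-scat} at this $T$, and Lemma~\ref{lem-small-scat} delivers the forward scattering; the backward‑in‑time direction is identical.

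The main obstacle is this recent‑past bookkeeping: $\beta$ must be long enough that the distant‑past dispersive gain $\beta^{-c}$ is small, yet short enough --- and $R$ large enough --- that the local mass density stays small throughout $J$ while the smallness factor $\vareps^{2(1-\vartheta)}$ still outweighs the positive powers $R^{d/\gamma-\alpha}\beta^{1/\tilde{a}'}$ of the singular‑weight and time‑window factors on $\{|x|\le R/2\}$. Balancing these competing requirements is exactly what forces the scaling $\beta=\vareps^{-p}$, $R\sim\vareps^{-p-2}$ and dictates how small $p$ must be, and it is where the structural hypotheses $3\le d\le5$, $0<\alpha<\min\{2,d/2\}$, $\tfrac{4-d}{2}<\alpha<\tfrac{6-d}{2}$ --- through the admissibility of Lemma~\ref{lem-expo} and the nonlinear estimates of Lemmas~\ref{lem-non-est-1}--\ref{lem-non-est-2} --- come into play.
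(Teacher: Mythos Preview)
Your proposal is correct and follows essentially the same architecture as the paper's proof: reduce to Lemma~\ref{lem-small-scat}, split the Duhamel formula into free evolution, distant past, and recent past, and estimate each piece. The only noteworthy difference is in the distant-past term: the paper rewrites $\int_0^{T-\beta}$ as the difference of two \emph{free} evolutions (from time $T-\beta$ and from time $0$), then interpolates between a Strichartz bound in an $\Ac_0$ pair and a pointwise-in-time dispersive bound in $L^{2d/(d-2-4\theta)}_x$ coming from \eqref{dis-est} and Lemma~\ref{lem-non-est-2}; you instead invoke a Kato-type inhomogeneous Strichartz estimate directly. The paper's route is slightly more self-contained (it avoids appealing to a separate Kato-type lemma), while your route is a legitimate shortcut once such an estimate is in hand; both yield the same $\beta^{-c}$ gain. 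The recent-past bookkeeping---propagating the local mass via Lemma~\ref{lem-loca-viri-iden}, splitting into $\{|x|\le R/2\}$ and $\{|x|>R/2\}$, and interpolating the near-origin piece between the small $L^2$ and the bounded $L^{2d/(d-2)}$ norms---matches the paper exactly.
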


\begin{proof}
	We follow an idea of Murphy \cite{Murphy} (see also \cite{CC-PAMS}) which is inspired by earlier works \cite{DM-PAMS, Tao}. Thanks to Lemma \ref{lem-small-scat}, it suffices to justify the smallness condition \eqref{small-cond-scat}. Let $\vareps>0$ be a small constant. Let $T>0$ be a large time to be chosen later depending on $\vareps$. We will show that
	\begin{align} \label{est-T}
		\|(\Sc_1(t-T) u(T), \Sc_2(t-T) v(T))\|_{\Lc^{\overline{a}}([T,+\infty), \Lc^{\overline{r}}) \cap \Lc^{\hat{a}}([T,+\infty), \Lc^{\hat{r}})} \lesssim \vareps^\rho
	\end{align}
	for some constant $\rho>0$. To this end, we write, using the Duhamel formula, for $T>\vareps^{-\beta}$ with some $\beta>0$ to be determined later,
	\begin{align}
		(\Sc_1(t-T) u(T) &, \Sc_2(t-T) v(T)) \nonumber \\
		&= (\Sc_1(t) u_0, \Sc_2(t) v_0) + i \int_0^T \left(\Sc_1(t-\tau) |x|^{-\alpha} \overline{u}(\tau) v(\tau), \frac{1}{2}\Sc_2(t-\tau) |x|^{-\alpha} u^2(\tau)\right) \dd\tau \nonumber\\
		&=\Fc_0(t) + \Fc_1(t) + \Fc_2(t), \label{F012}
	\end{align}
	where $(u_0,v_0)$ is initial data at time $t=0$ and
	\begin{align*}
		\Fc_0(t) :&= (\Sc_1(t) u_0, \Sc_2(t) v_0), \\
		\Fc_1(t):&= i \int_{I_1}\left(\Sc_1(t-\tau) |x|^{-\alpha} \overline{u}(\tau) v(\tau), \frac{1}{2}\Sc_2(t-\tau) |x|^{-\alpha} u^2(\tau)\right) \dd\tau, \\
		\Fc_2(t):&=i \int_{I_2} \left(\Sc_1(t-\tau) |x|^{-\alpha} \overline{u}(\tau) v(\tau), \frac{1}{2}\Sc_2(t-\tau) |x|^{-\alpha} u^2(\tau)\right) \dd\tau,
	\end{align*}
	with $I_1=[0,T-\vareps^{-\beta}]$ and $I_2=[T-\vareps^{-\beta},T]$. The terms $\Fc_0, \Fc_1$, and $\Fc_2$ are referred as the linear, the distance past, and the recent past parts respectively.
	
	{\bf Step 1. The linear part.} To estimate the linear part, we use Strichartz estimates to have
	\[
	\|\Fc_0\|_{\Lc^{\overline{a}}([0,+\infty), \Lc^{\overline{r}}) \cap \Lc^{\hat{a}}([0,+\infty), \Lc^{\hat{r}})} \lesssim \|(u_0,v_0)\|_{\Hc^1}.
	\]
	By the monotone convergence theorem, there exists $T>\vareps^{-\beta}$ large so that
	\begin{align} \label{est-F0}
		\|\Fc_0\|_{\Lc^{\overline{a}}([T,+\infty), \Lc^{\overline{r}}) \cap \Lc^{\hat{a}}([T,+\infty), \Lc^{\hat{r}})} \lesssim \vareps.
	\end{align}
	
	{\bf Step 2. The distance past part.} We first observe that there exists $(\overline{b},\overline{e}) \in \Ac_0$ such that
	\[
	\frac{1}{\overline{a}} = \frac{1-s_c}{\overline{b}}+\theta s_c, \quad \frac{1}{\overline{r}} = \frac{1-s_c}{\overline{e}} + \frac{d-2-4\theta}{2d} s_c.
	\]
	In fact, as $(\overline{a}, \overline{r})\in \Ac_{s_c}$, we readily see that $\frac{2}{\overline{b}}+\frac{d}{\overline{e}} =\frac{d}{2}$. To ensure $(\overline{b}, \overline{e})\in \Ac_0$, it remains to check that $2<\overline{e}<\frac{2d}{d-2}$ or equivalently $\overline{b}>2$. The later is equivalent to 
	\[
	\frac{1-s_c}{\frac{1}{\overline{a}}-\theta s_c} >2 \Longleftrightarrow \frac{6-d-2\alpha}{\frac{6-d-2\alpha+\theta}{2(3-\theta)} - \theta(d-4+2\alpha)}>2.
	\]
	The limit as $\theta \rightarrow 0$ of the right hand side is strictly larger than 2, so the above condition is satisfied by taking $\theta>0$ small. By the H\"older  inequality, we have
	\[
	\|\Fc_1\|_{\Lc^{\overline{a}}([T,+\infty), \Lc^{\overline{r}})} \leq \|\Fc_1\|^{1-s_c}_{\Lc^{\overline{b}}([T,+\infty), \Lc^{\overline{e}})} \|\Fc_1\|^{s_c}_{\Lc^{\frac{1}{\theta}}([T,+\infty), \Lc^{\frac{2d}{d-2-4\theta}})}.
	\]
	Using the fact that
	\[
	\Fc_1(t) = (\Sc_1(t-T+\vareps^{-\beta}) u(T-\vareps^{-\beta}) -\Sc_1(t) u_0, \Sc_2(t-T+\vareps^{-\beta}) v(T-\vareps^{-\beta}) - \Sc_2(t) v_0),
	\]
	Strichartz estimates and \eqref{E} imply
	\[
	\|\Fc_1\|_{\Lc^{\overline{b}}([T,+\infty), \Lc^{\overline{e}})} \lesssim 1.
	\]
	On the other hand, by dispersive estimates \eqref{dis-est}, \eqref{non-est-4}, and \eqref{E}, we have for $t\geq T$,
	\begin{align*}
		\|\Fc_1(t)\|_{L^{\frac{2d}{d-2-4\theta}}_x} &\lesssim \int_0^{T-\vareps^{-\beta}} (t-\tau)^{-1-2\theta} \|(|x|^{-\alpha} \overline{u}v, |x|^{-\alpha} u^2)\|_{\Lc^{\frac{2d}{d+2+4\theta}}} \dd\tau \\
		&\lesssim \int_0^{T-\vareps^{-\beta}} (t-s)^{-1-2\theta} \|(u(\tau),v(\tau))\|^2_{\Hc^1} \dd\tau \\
		&\lesssim (t-T+\vareps^{-\beta})^{-2\theta}.
	\end{align*}
	It follows that
	\[
	\|\Fc_1\|_{\Lc^{\frac{1}{\theta}}([T,+\infty), \Lc^{\frac{2d}{d-2-4\theta}})} \lesssim \|(t-T+\vareps^{-\beta})^{-2\theta}\|_{L^{\frac{1}{\theta}}_t([T,+\infty))} \lesssim \vareps^{\beta \theta}.
	\]
	In particular, we get
	\[
	\|\Fc_1\|_{\Lc^{\overline{a}}([T,+\infty), \Lc^{\overline{r}})} \lesssim \vareps^{\beta \theta s_c}. 
	\]
	The estimate for $\|\Fc_1\|_{\Lc^{\hat{a}}([T,+\infty), \Lc^{\hat{r}})}$ is treated in a similar manner. In fact, we also observe that there exists $(\hat{b}, \hat{e}) \in \Ac_0$ such that
	\[
	\frac{1}{\hat{a}}=\frac{1-s_c}{\hat{b}} +\theta s_c, \quad \frac{1}{\hat{r}} = \frac{1-s_c}{\hat{e}} + \frac{d-2-4\theta}{2d} s_c.
	\] 
	Note that the condition $\hat{b}>2$ is equivalent to 
	\[
	\frac{1-s_c}{\frac{1}{\hat{a}} - \theta s_c} >2 \Longleftrightarrow \frac{6-d-2\alpha}{\frac{6-d-2\alpha}{3-\theta}-\theta(d-4+2\alpha)} >2
	\]
	which is clearly fulfilled for $\theta>0$ small. Estimating as above, we arrive at
	\[
	\|\Fc_1\|_{\Lc^{\hat{a}}([T,+\infty), \Lc^{\hat{r}})} \lesssim \vareps^{\beta \theta s_c}
	\]
	hence
	\begin{align} \label{est-F1}
		\|\Fc_1\|_{\Lc^{\overline{a}}([T,+\infty), \Lc^{\overline{r}}) \cap \Lc^{\hat{a}}([T,+\infty), \Lc^{\hat{r}})} \lesssim \vareps^{\beta \theta s_c}. 
	\end{align}
	
	{\bf Step 3. The recent past part.} Let $R>0$ be a large parameter depending on $\vareps$ to be determined shortly. Using \eqref{scat-cond} and enlarging $T$ if necessary, we have
	\[
	\int_{|x|\leq R} (|u(T,x)|^2 +2|v(T,x)|^2) \dd x \leq \vareps^2,
	\]
	hence
	\[
	\int_{\Rb^d} \varrho_R(x) (|u(T,x)|^2 +2|v(T,x)|^2) \dd x \leq \vareps^2,
	\]
	where $\varrho_R(x)=\varrho(x/R)$ with $C^\infty_0(\Rb^d)$ satisfying $0\leq \varrho \leq 1$ and
	\begin{align} \label{defi-varrho}
		\varrho(x) = \left\{
		\begin{array}{cl}
			1 &\text{if } |x| \leq 1/2, \\
			0 &\text{if } |x| \geq 1.
		\end{array}
		\right.
	\end{align}
	By Lemma \ref{lem-loca-viri-iden}, we have from \eqref{E} that
	\begin{align*}
		\left| \frac{\dd}{\dd t} \int_{\Rb^d} \varrho_R (|u(t)|^2 +2 |v(t)|^2) \dd x\right| &= \left|\ima \int_{\Rb^d} \nabla \varrho_R \cdot (\nabla u(t) \overline{u}(t) + \kappa \nabla v(t) \overline{v}(t)) \dd x \right| \\
		&\leq \|\nabla \varrho_R\|_{L^\infty_x} (\|\nabla u(t)\|_{L^2_x} \|u(t)\|_{L^2_x} + \kappa \|\nabla v(t)\|_{L^2_x} \|\nabla v(t)\|_{L^2_x}) \\
		&\lesssim R^{-1}.
	\end{align*}
	Thus we have for all $t\in I_2 = [T-\vareps^{-\beta}, T]$,
	\begin{align*}
		\int_{\Rb^d} \varrho_R(x)(|u(t,x)|^2+2|v(t,x)|^2) \dd x &=\int_{\Rb^d} \varrho_R(x)(|u(T,x)|^2+2|v(T,x)|^2)\dd x \\
		&\quad + \int_t^T \left( \frac{\dd}{\dd\tau} \int_{\Rb^d} \varrho_R(x) (|u(\tau,x)|^2 +2|v(\tau,x)|^2) \dd x\right) \dd\tau \\
		&\leq \vareps^2 + CR^{-1} (T-t) \\
		&\leq \vareps^2 + C R^{-1} \vareps^{-\beta} \\
		&\lesssim \vareps^2
	\end{align*}
	provided that $R\sim \vareps^{-2-\beta}$. In particular, we have
	\[
	\sup_{t\in I_2} \int_{\Rb^d} \varrho_R(x) (|u(t,x)|^2+2|v(t,x)|^2) \dd x \lesssim \vareps^2
	\]
	hence 
	\[
	\sup_{t\in I_2} \|(\varrho_R u(t), \varrho_Rv(t))\|_{\Lc^2} \lesssim \vareps^2
	\]
	as $\varrho_R^2\leq \varrho_R$. 
	
	On the other hand, we have from \eqref{est-A} and \eqref{E} that
	\begin{align*}
		\|\varrho_R |x|^{-\alpha} \overline{u}(t)v(t)\|_{L^{\hat{r}'}_x} &\leq \||x|^{-\alpha}\|_{L^\gamma_x(|x|\leq R)} \|u(t)\|^\theta_{L^{\overline{m}_+}_x} \|u(t)\|^{1-\theta}_{L^{\hat{r}}_x} \|\varrho_Rv(t)\|_{L^{\hat{r}}_x} \\
		&\lesssim R^{d\theta^2} \|u(t)\|_{H^1_x} \|\varrho_R v(t)\|_{L^{\hat{r}}_x} \\
		&\lesssim \vareps^{-d(2+\beta)\theta^2} \|u(t)\|_{H^1_x} \|\varrho_R v(t)\|_{L^2_x}^{\frac{2d-(d-2)\hat{r}}{2\hat{r}}} \|\varrho_R v(t)\|_{L^{\frac{2d}{d-2}}_x}^{\frac{d(\hat{r}-2)}{2\hat{r}}} \\
		&\lesssim \vareps^{-d(2+\beta)\theta^2} \|u(t)\|_{H^1_x} \|v(t)\|_{H^1_x}^{\frac{d(\hat{r}-2)}{2\hat{r}}}\|\varrho_R v(t)\|_{L^2_x}^{\frac{2d-(d-2)\hat{r}}{2\hat{r}}}  \\
		&\lesssim \vareps^{\frac{2d-(d-2)\hat{r}}{2\hat{r}}-d(2+\beta)\theta^2}, \quad \forall t\in I_2,
	\end{align*}
	where we recall that $\frac{1}{\gamma} = \frac{\alpha}{d} + \theta^2$ inside the ball. We also have from \eqref{est-A} and \eqref{E} that
	\begin{align*}
		\|(1-\varrho_R) |x|^{-\alpha} \overline{u}(t) v(t)\|_{L^{\hat{r}'}_x} &\lesssim \||x|^{-\alpha} \overline{u}(t) v(t)\|_{L^{\hat{r}'}_x(|x|\geq R/2)} \\
		&\leq \||x|^{-\alpha}\|_{L^\gamma_x(|x|\geq R/2)} \|u(t)\|^\theta_{L^{\overline{m}_-}_x} \|u(t)\|^{1-\theta}_{L^{\hat{r}}_x} \|v(t)\|_{L^{\hat{r}}_x} \\
		&\lesssim R^{-d\theta^2} \|u(t)\|_{H^1_x} \|v(t)\|_{H^1_x} \\
		&\lesssim R^{-d\theta^2} \\
		&\lesssim \vareps^{d(2+\beta)\theta^2}, \quad \forall t\in I_2,
	\end{align*}
	where $\frac{1}{\gamma}=\frac{\alpha}{d}-\theta^2$ outside the ball.  Thus we get
	\[
	\||x|^{-\alpha} \overline{u}(t) v(t)\|_{L^{\hat{r}'}_x} \lesssim \vareps^{\frac{2d-(d-2)\hat{r}}{2\hat{r}} - d(2+\beta)\theta^2} + \vareps^{d(2+\beta)\theta^2}, \quad \forall t\in I_2. 
	\]
	A similar estimate goes for $\||x|^{-\alpha} u^2(t)\|_{L^{\hat{r}'}_x}$ and we obtain
	\[
	\|(|x|^{-\alpha} \overline{u}v, |x|^{-\alpha} u^2)\|_{\Lc^\infty(I_2, \Lc^{\hat{r}'})} \lesssim \vareps^{\frac{2d-(d-2)\hat{r}}{2\hat{r}} - d(2+\beta)\theta^2} + \vareps^{d(2+\beta)\theta^2}.
	\]
	By Strichartz estimates, we have
	\begin{align*}
		\|\Fc_2\|_{\Lc^{\hat{a}}([T,+\infty), \Lc^{\hat{r}}) \cap \Lc^{\overline{a}}([T,+\infty), \Lc^{\overline{r}})} &\lesssim \|(|x|^{-\alpha} \overline{u}v, |x|^{-\alpha} u^2)\|_{\Lc^{\tilde{a}'}(I_2, \Lc^{\hat{r}'})} \\
		&\lesssim |I_2|^{\frac{1}{\tilde{a}'}} \|(|x|^{-\alpha} \overline{u}v, |x|^{-\alpha} u^2)\|_{\Lc^{\infty}(I_2, \Lc^{\hat{r}'})} \\
		&\lesssim \vareps^{\frac{2d-(d-2)\hat{r}}{2\hat{r}} - d(2+\beta)\theta^2-\frac{\beta}{\tilde{a}'}} + \vareps^{d(2+\beta)\theta^2-\frac{\beta}{\tilde{a}'}}.
	\end{align*}
	Now taking $\beta =\frac{2d\theta^2 \tilde{a}'}{2-d\theta^2 \tilde{a}'}$, we get
	\begin{align} \label{est-F2}
		\|\Fc_2\|_{\Lc^{\hat{a}}([T,+\infty), \Lc^{\hat{r}}) \cap \Lc^{\overline{a}}([T,+\infty), \Lc^{\overline{r}})} \lesssim \vareps^{\frac{2d-(d-2)\hat{r}}{2\hat{r}} -\frac{6d\theta^2}{2-d\theta^2\tilde{a}'}} + \vareps^{\frac{2d\theta^2\tilde{a}'}{2-d\theta^2 \tilde{a}'}}.
	\end{align}
	Collecting \eqref{F012}, \eqref{est-F0}, \eqref{est-F1}, and \eqref{est-F2}, we obtain
	\[
	\|(\Sc_1(t-T)u(T),\Sc_2(t-T)v(T))\|_{\Lc^{\hat{a}}([T,+\infty), \Lc^{\hat{r}}) \cap \Lc^{\overline{a}}([T,+\infty), \Lc^{\overline{r}})} \lesssim \vareps + \vareps^{\frac{2d\theta^3 \tilde{a}' s_c}{2-d\theta^2\tilde{a}'}} + \vareps^{\frac{2d-(d-2)\hat{r}}{2\hat{r}} -\frac{6d\theta^2}{2-d\theta^2\tilde{a}'}} + \vareps^{\frac{2d\theta^2\tilde{a}'}{2-d\theta^2 \tilde{a}'}}
	\]
	for some $T>\vareps^{-\frac{2d\theta^2\tilde{a}'}{2-d\theta^2\tilde{a}'}}$ which proves \eqref{est-T} by choosing $\theta>0$ sufficiently small. The proof is complete.
\end{proof}

\subsection{Energy scattering}
\begin{proposition}[Coercivity property] \label{prop-coer}
	Let $2\leq d\leq 5$, $0<\alpha<\min\{2,d\}$, $\frac{4-d}{2}<\alpha<\frac{6-d}{2}$, $\kappa>0$, and $\gamma \in \Rb$. Let $(u_0,v_0) \in \Hc^1$ satisfy \eqref{cond-gwp-supe-1} and \eqref{cond-gwp-supe-2}. Then there exists $R_0=R_0(\kappa, u_0,v_0,\varphi,\psi)>0$ such that the corresponding solution to \eqref{INLS} satisfies for all $R\geq R_0$,
	\begin{align} \label{coer-prop}
		\Gb(\varrho_Ru(t),\varrho_Rv(t)) \geq \delta \int_{\Rb^d} |x|^{-\alpha} (|\varrho_R(x)u(t,x)|^3+|\varrho_R(x)v(t,x)|^3) \dd x, \quad t\in \Rb
	\end{align}
	for some constant $\delta>0$ depending on $\kappa, u_0,v_0, \varphi$, and $\psi$, where $\varrho_R$ is as in \eqref{defi-varrho}.  
\end{proposition}
\begin{proof}
	The proof is divided into several steps. 
	
	{\bf Step 1. A uniform bound.} We first show that there exists $\rho=\rho(u_0,v_0,\varphi,\psi) \in (0,1)$ such that 
	\begin{align} \label{est-rho}
		\Kb(u(t),v(t)) (\Mb(u(t),v(t)))^\sigma \leq (1-\rho) \Kb(\varphi,\psi) (\Mb(\varphi,\psi))^\sigma, \quad \forall t\in \Rb.
	\end{align}
	We first observe that for $(u_0,v_0) \ne (0,0)$ satisfying \eqref{cond-gwp-supe-1} and \eqref{cond-gwp-supe-2}, we have $\Hb(u_0,v_0) >0$. To see this, let $g$ be as in \eqref{defi-g}. Using \eqref{iden-var-psi}, we observe that
	\begin{align} \label{C-GN}
	C_{\GN} = \frac{\Pb(\varphi,\psi)}{\left(\Kb(\varphi,\psi)\right)^{\frac{d+2\alpha}{4}} \left(\Mb(\varphi,\psi)\right)^{\frac{6-d-2\alpha}{4}}}= \frac{2}{d+2\alpha} \left(\Kb(\varphi,\psi) \left(\Mb(\varphi,\psi)\right)^\sigma\right)^{-\frac{d+2\alpha-4}{4}}.
	\end{align}
	Thus $g'(\lambda_0)=0$ with
	\[
	\lambda_0 = \left(\frac{2}{(d+2\alpha)C_{\GN}}\right)^{\frac{4}{d+2\alpha-4}} = \Kb(\varphi,\psi)(\Mb(\varphi,\psi))^\sigma
	\]
	and $g$ is strictly increasing on $(0,\lambda_0)$. This together with $(u_0,v_0)\ne(0,0)$ and \eqref{cond-gwp-supe-2} yield
	\[
	\Hb(u_0,v_0) (\Mb(u_0,v_0))^\sigma \geq g\left(\Kb(u_0,v_0)(\Mb(u_0,v_0))^\sigma\right) >g(0)=0 
	\]
	which implies $\Hb(u_0,v_0)>0$. 
	
	Now, using \eqref{cond-gwp-supe-1}, we take $\vartheta=\vartheta(u_0,v_0,\varphi,\psi) \in (0,1)$ so that
	\[
	\Hb(u_0,v_0) (\Mb(u_0,v_0))^\sigma \leq (1-\vartheta) \Eb_0(\varphi,\psi) (\Mb(\varphi,\psi))^\sigma.
	\]
	Since (see the proof of Proposition \ref{prop-gwp} in the super-critical case)
	\[
	g\left(\Kb(u(t),v(t)) (\Mb(u(t),v(t)))^\sigma \right) \leq \Hb(u_0,v_0) (\Mb(u_0,v_0))^\sigma, \quad \forall t\in \Rb,
	\] 
	and
	\[
	\Eb_0(\varphi,\psi) (\Mb(\varphi,\psi))^\sigma = \frac{d+2\alpha-4}{2(d+2\alpha)} \Kb(\varphi,\psi) (\Mb(\varphi,\psi))^\sigma = \frac{d+2\alpha-4}{4} C_{\GN} \left( \Kb(\varphi,\psi) (\Mb(\varphi,\psi))^\sigma\right)^{\frac{d+2\alpha}{4}},
	\]
	we get
	\begin{multline*}
		\frac{d+2\alpha}{d+2\alpha-4} \frac{\Kb(u(t),v(t)) (\Mb(u(t),v(t)))^\sigma}{\Kb(\varphi,\psi) (\Mb(\varphi,\psi))^\sigma} \\
		- \frac{4}{d+2\alpha-4} \left(\frac{\Kb(u(t),v(t)) (\Mb(u(t),v(t)))^\sigma}{\Kb(\varphi,\psi) (\Mb(\varphi,\psi))^\sigma}  \right)^{\frac{d+2\alpha}{4}} \leq 1-\vartheta, \quad \forall t\in \Rb.
	\end{multline*}
	Denote
	\[
	\lambda(t):= \frac{\Kb(u(t),v(t)) (\Mb(u(t),v(t)))^\sigma}{\Kb(\varphi,\psi) (\Mb(\varphi,\psi))^\sigma}.
	\]
	We see that $\lambda(t)\in (0,1)$ for all $t\in \Rb$ due to \eqref{claim-gwp-supe} and
	\[
	h(\lambda(t)) \leq 1-\vartheta, \quad \forall t\in \Rb,
	\]
	where 
	\[
	h(\lambda):= \frac{d+2\alpha}{d+2\alpha-4}\lambda - \frac{4}{d+2\alpha-4} \lambda^{\frac{d+2\alpha}{4}}.
	\]
	As $h$ is continuous on $[0,1]$, $h(0)=0$, and $h(1)=1$, there exists $\rho\in (0,1)$ such that $h(1-\rho) = 1-\vartheta$. In particular, we have
	\[
	h(\lambda(t)) \leq h(1-\rho), \quad \forall t\in \Rb.
	\]
	Since $h$ is strictly increasing on $(0,1)$, we infer that $\lambda(t) \leq 1-\rho$ for all $t\in \Rb$ and \eqref{est-rho} follows.
	
	{\bf Step 2. A truncated uniform bound.} There exists $R_0=R_0(\kappa, u_0,v_0,\varphi,\psi)>0$ such that for all $R\geq R_0$,
	\begin{align} \label{est-varrho-R}
		\Kb(\varrho_Ru(t),\varrho_Rv(t)) (\Mb(\varrho_Ru(t),\varrho_Rv(t)))^\sigma \leq \left(1-\frac{\rho}{2}\right) \Kb(\varphi,\psi) (\Mb(\varphi,\psi))^{\sigma}, \quad \forall t\in \Rb. 
	\end{align}
	In fact, as $0\leq \varrho_R \leq 1$, we have $\Mb(\varrho_Ru(t),\varrho_Rv(t)) \leq \Mb(u(t),v(t))$ for all $R>0$ and all $t\in \Rb$. On the other hand, by integration by parts, we have
	\[
	\int_{\Rb^d} |\nabla(\varrho f)|^2 \dd x = \int_{\Rb^d} \varrho^2 |\nabla f|^2 \dd x - \int_{\Rb^d} \varrho \Delta \varrho |f|^2 \dd x
	\]
	which implies
	\begin{align*}
		\Kb(\varrho_Ru(t),\varrho_Rv(t)) &= \int_{\Rb^d} \varrho_R^2 (|\nabla u(t)|^2 + \kappa |\nabla v(t)|^2) \dd x - \int_{\Rb^d} \varrho_R \Delta \varrho_R (|u(t)|^2 + \kappa |v(t)|^2) \dd x \\
		&\leq \Kb(u(t),v(t)) + C(\kappa) R^{-2} \Mb(u(t),v(t)).
	\end{align*}
	By the conservation of mass and \eqref{est-rho}, we get
	\begin{align*}
		\Kb(\varrho_Ru(t), \varrho_Rv(t)) (\Mb(\varrho_Ru(t),\varrho_Rv(t)))^\sigma &\leq \Kb(u(t),v(t)) (\Mb(u(t),v(t)))^\sigma + C(\kappa) R^{-2} (\Mb(u(t),v(t)))^{\sigma+1} \\
		&\leq (1-\rho) \Kb(\varphi,\psi) (\Mb(\varphi,\psi))^\sigma + C(\kappa) R^{-2} (\Mb(u_0,v_0))^{\sigma+1} \\
		&\leq \left(1-\frac{\rho}{2}\right) \Kb(\varphi,\psi) (\Mb(\varphi,\psi))^\sigma, \quad \forall R\geq R_0, \quad \forall t\in \Rb
	\end{align*}
	provided that $R_0>0$ is taken sufficiently large depending on $\rho$, $\kappa$, and $\Mb(u_0,v_0)$. This shows \eqref{est-varrho-R}.
	
	{\bf Step 3. A coercivity estimate.} We claim that if
	\[
	\Kb(f,g) (\Mb(f,g))^\sigma \leq (1-\nu) \Kb(\varphi,\psi)(\Mb(\varphi,\psi))^\sigma
	\]
	for some $0<\nu <1$, then there exists $\delta =\delta(\nu, \kappa, \varphi, \psi)>0$ such that
	\[
	\Gb(f,g) \geq \delta\int_{\Rb^d} |x|^{-\alpha}(|f|^3+|g|^3) \dd x.
	\]
	Thanks to this claim, the desired estimate \eqref{coer-prop} follows immediately from \eqref{est-varrho-R}. To prove the claim, we write
	\[
	\Gb(f,g) = \Kb(f,g) - \frac{d+2\alpha}{2} \Pb(f,g) = \frac{d+2\alpha}{2} \Eb_0(f,g) - \frac{d+2\alpha-4}{4} \Kb(f,g).
	\]
	Using the Gagliardo--Nirenberg inequality \eqref{GN-ineq} and \eqref{C-GN}, we have
	\begin{align*}
		\Eb_0(f,g) &\geq \frac{1}{2} \Kb(f,g) - C_{\GN} (\Kb(f,g))^{\frac{d+2\alpha}{4}} (\Mb(f,g))^{\frac{6-d-2\alpha}{4}} \\
		&= \frac{1}{2} \Kb(f,g) \left(1-C_{\GN} \left(\Kb(f,g) (\Mb(f,g))^\sigma\right)^{\frac{d+2\alpha-4}{4}} \right) \\
		&\geq \frac{1}{2} \Kb(f,g) \left(1-C_{\GN} \left( (1-\nu) \Kb(\varphi,\psi) (\Mb(\varphi,\psi))^\sigma\right)^{\frac{d+2\alpha-4}{4}}\right) \\
		&= \frac{1}{2}\Kb(f,g) \left(1-\frac{2}{d+2\alpha} (1-\nu)^{\frac{d+2\alpha-4}{4}}\right).
	\end{align*}
	It follows that
	\begin{align}
		\Gb(f,g) &\geq \frac{d+2\alpha}{4} \Kb(f,g) \left(1-\frac{2}{d+2\alpha}(1-\nu)^{\frac{d+2\alpha-4}{4}}\right) - \frac{d+2\alpha-4}{4} \Kb(f,g) \nonumber \\
		&= \left(1-(1-\nu)^{\frac{d+2\alpha-4}{4}}\right)\Kb(f,g). \label{coer-proof-1}
	\end{align}
	On the other hand, we have from the standard Gagliardo--Nirenberg inequality \eqref{gn-ineq} that
	\begin{align*}
		\int_{\Rb^d} |x|^{-\alpha}(|f|^3+|g|^3) \dd x &\leq C_{\gn} \left(\|\nabla f\|^{\frac{d+2\alpha}{2}}_{L^2} \|f\|^{\frac{6-d-2\alpha}{2}}_{L^2} + \|\nabla g\|^{\frac{d+2\alpha}{2}}_{L^2} \|g\|^{\frac{6-d-2\alpha}{2}}_{L^2} \right) \\
		&\leq C(\kappa) (\Kb(f,g))^{\frac{d+2\alpha}{4}} (\Mb(f,g))^{\frac{6-d-2\alpha}{4}} \\
		& = C(\kappa) \Kb(f,g) \left( \Kb(f,g) (\Mb(f,g))^\sigma\right)^{\frac{d+2\alpha-4}{4}} \\
		&\leq C(\kappa)\left((1-\nu) \Kb(\varphi,\psi) (\Mb(\varphi,\psi))^\sigma \right)^{\frac{d+2\alpha-4}{4}} \Kb(f,g)
	\end{align*}
	which together with \eqref{coer-proof-1} yields
	\[
	\int_{\Rb^d} |x|^{-\alpha}(|f|^3+|g|^3) \dd x \leq \frac{C(\kappa)\left((1-\nu) \Kb(\varphi,\psi) (\Mb(\varphi,\psi))^\sigma \right)^{\frac{d+2\alpha-4}{4}}}{\left(1-(1-\nu)^{\frac{d+2\alpha-4}{4}}\right)} \Gb(f,g).
	\]
	This proves the claim and the proof is complete.
\end{proof}

\begin{proof}[Proof of Theorem \ref{theo-scat}]
	It suffices to consider the positive time direction since the one in the negative time direction is treated in a similar manner. The proof is divided  into two steps.
	
	{\bf Step 1. A space-time estimate.} We first show that there exists $C=C(\kappa, u_0,v_0,\varphi,\psi)>0$ such that for any time interval $J \subset \Rb$,
	\begin{align} \label{space-time-est}
		\int_J \int_{\Rb^d} |x|^{-\alpha}(|u(t,x)|^3 + |v(t,x)|^3) \dd x \dd t \leq C|J|^{\frac{1}{1+\alpha}}.
	\end{align}
	To see this, we introduce a cutoff function $\eta:[0,\infty) \rightarrow [0,2]$ satisfying
	\[
	\eta(r) = \left\{
	\begin{array}{ccl}
		2 &\text{if}& 0\leq r\leq 1, \\
		0 &\text{if}& r\geq 2.
	\end{array}
	\right.
	\]
	Define the function $\phi:[0,\infty) \rightarrow [0, \infty)$ by
	\[
	\phi(r):=\int_0^r \int_0^s \eta(\tau) \dd\tau.
	\]
	Let $R>0$. We define  the radial function 
	\begin{align} \label{phi-R}
		\phi_R(x)=\phi_R(r) = R^2 \phi(r/R), \quad r=|x| 
	\end{align}
	and the localized virial quantity
	\begin{align} \label{M-phi-R}
		\Mcal_{\phi_R}(t) = \ima \int_{\Rb^d} \nabla \phi_R \cdot (\nabla u(t) \overline{u}(t)+\nabla v(t) \overline{v}(t)) \dd x.
	\end{align}
	Using Remark \ref{rem-loca-viri-rad}, we have
	\begin{align*}
		\frac{\dd}{\dd t} \Mcal_{\phi_R}(t) &= \frac{1}{4} \int_{\Rb^d} \Delta^2\phi_R (|u(t)|^2 + \kappa |v(t)|^2) \dd x + \int_{\Rb^d} \frac{\phi'_R}{r}(|\nabla u(t)|^2 +\kappa|\nabla v(t)|^2) \dd x \\
		&\quad + \int_{\Rb^d} \left(\frac{\phi''_R}{r^2}-\frac{\phi'_R}{r^3}\right) (|x\cdot \nabla u(t)|^2 + \kappa |x\cdot \nabla v(t)|^2) \dd x \\
		&\quad -\frac{1}{2} \rea \int_{\Rb^d} \left(\phi''_R + (d-1+2\alpha)\frac{\phi'_R}{r}\right) |x|^{-\alpha} u^2(t) \overline{v}(t) \dd x.
	\end{align*}
	As $\phi_R(r)=r^2$ for $0\leq r\leq R$, we rewrite the above identity as
	\begin{align*}
		\frac{\dd}{\dd t} \Mcal_{\phi_R}(t) &= 2 \int_{|x|\leq R} (|\nabla u(t)|^2 +\kappa |\nabla v(t)|^2) \dd x - (d+2\alpha) \rea \int_{|x|\leq R} |x|^{-\alpha} u^2(t) \overline{v}(t) \dd x \\
		&\quad +E_1(u(t),v(t)) + E_2(u(t),v(t)) + E_3(u(t),v(t)),
	\end{align*}
	where
	\begin{align*}
		E_1(u,v) &=\frac{1}{4} \int_{\Rb^d} \Delta^2\phi_R (|u|^2 + \kappa |v|^2) \dd x, \\ 
		E_2(u,v) &=\int_{|x|>R} \frac{\phi'_R}{r}(|\nabla u|^2 +\kappa|\nabla v|^2) \dd x + \int_{|x|>R} \left(\frac{\phi''_R}{r^2}-\frac{\phi'_R}{r^3}\right) (|x\cdot \nabla u|^2 + \kappa |x\cdot \nabla v|^2) \dd x, \\
		E_3(u,v) &= -\frac{1}{2} \rea \int_{|x|>R} \left(\phi''_R + (d-1+2\alpha)\frac{\phi'_R}{r}\right) |x|^{-\alpha} u^2 \overline{v} \dd x.
	\end{align*}
	By the conservation of mass, we have
	\[
	|E_1(u(t),v(t))| \leq C(\kappa, \Mb(u_0,v_0)) R^{-2}.
	\]
	As $0\leq \phi''_R, \frac{\phi'_R}{r} \leq 2$, we have
	\begin{align*}
		\frac{\phi'_R}{r} |\nabla u|^2 + \left(\frac{\phi''_R}{r^2}-\frac{\phi'_R}{r^3}\right) |x\cdot \nabla u|^2 &= \frac{\phi''_R}{r^2}|x\cdot \nabla u|^2 + \frac{\phi'_R}{r} \left(|\nabla u|^2-\frac{1}{r^2}|x\cdot \nabla u|^2\right) \\
		&\geq \frac{\phi''_R}{r^2}|x\cdot \nabla u|^2 \geq 0.
	\end{align*}
	A similar estimate holds for $v$ and we get $E_2(u(t),v(t))\geq 0$. We also have 
	\begin{align*}
		|E_3(u(t),v(t))| &\leq C \int_{|x|>R} |x|^{-\alpha} |u(t)|^2 |v(t)| \dd x \\
		&\leq CR^{-\alpha} \|u(t)\|^2_{L^3} \|v(t)\|_{L^3} \\
		&\leq CR^{-\alpha} \|u(t)\|^2_{H^1} \|v(t)\|_{H^1} \\
		&\leq CR^{-\alpha}
	\end{align*}
	for some constant $C>0$ depending on $\kappa, u_0,v_0, \varphi$, and $\psi$. Here we have used the Sobolev embedding $H^1(\Rb^d) \subset L^3(\Rb^d)$ for $3\leq d\leq 5$ and \eqref{blow-alte}. 
	
	It follows that
	\begin{align*}
		\frac{\dd}{\dd t} \Mcal_{\phi_R}(t) &\geq 2 \int_{|x|\leq R} (|\nabla u(t)|^2 + \kappa |\nabla v(t)|^2) \dd x - (d+2\alpha) \rea \int_{|x|\leq R} |x|^{-2} u^2(t) \overline{v}(t) \dd x + CR^{-\alpha}, \quad \forall t\in \Rb
	\end{align*}
	as $\alpha<2$. Now let $\varrho_R$ be as in \eqref{defi-varrho}. Observe that
	\begin{align*}
		\int_{\Rb^d} |\nabla(\varrho_R u(t))|^2 \dd x &=\int_{\Rb^d} \varrho^2_R|\nabla u(t)|^2 \dd x - \int_{\Rb^d} \varrho_R \Delta \varrho_R |u(t)|^2 \dd x \\
		&=\int_{|x|\leq R} |\nabla u(t)|^2 \dd x -\int_{R/2<|x|\leq R} (1-\varrho^2_R) |\nabla u(t)|^2 \dd x -\int_{\Rb^d} \varrho_R \Delta \varrho_R |u(t)|^2 \dd x \\
		&\leq \int_{|x|\leq R} |\nabla u(t)|^2 \dd x -\int_{\Rb^d} \varrho_R\Delta \varrho_R |u(t)|^2 \dd x
	\end{align*}
	due to $0\leq \varrho_R \leq 1$. The same estimate goes for $v$ and we get
	\begin{align*}
		\int_{|x|\leq R} (|\nabla u(t)|^2 + \kappa |\nabla v(t)|^2) \dd x &\geq \int_{\Rb^d} (|\nabla (\varrho_R u(t))|^2 + \kappa |\nabla(\varrho_R v(t))|^2) \dd x + \int_{\Rb^d} \varrho_R \Delta \varrho_R (|u(t)|^2 +\kappa |v(t)|^2) \dd x \\
		&= \Kb(\varrho_Ru(t), \varrho_Rv(t)) + O(R^{-2}),
	\end{align*}
	where we have used $\|\Delta \varrho_R\|_{L^\infty} \lesssim R^{-2}$ and the mass conservation to get the second line. In addition, we have
	\begin{align*}
		\rea \int_{\Rb^d} |x|^{-\alpha} (\varrho_R u(t))^2 \overline{\varrho_R v(t)} \dd x &= \rea \int_{|x|\leq R} |x|^{-\alpha} u^2(t) \overline{v}(t) \dd x - \rea \int_{R/2 <|x| \leq R} |x|^{-\alpha} (1-\varrho_R^3) u^2(t) \overline{v}(t) \dd x.
	\end{align*}
	The last term is estimated as for $E_3(u,v)$ and we obtain
	\[
	\rea \int_{|x|\leq R} |x|^{-\alpha} u^2(t) \overline{v}(t) \dd x = \Pb(\varrho_R u(t),\varrho_Rv(t)) + O(R^{-\alpha}).
	\]
	In particular, using \eqref{coer-prop}, there exists $R_0=R_0(\kappa, u_0,v_0,\varphi,\psi)>0$ such that for all $R\geq R_0$,
	\begin{align*}
		2 \int_{|x|\leq R} (|\nabla u(t)|^2 + \kappa |\nabla v(t)|^2) \dd x &- (d+2\alpha) \rea \int_{|x|\leq R} |x|^{-2} u^2(t) \overline{v}(t) \dd x \\
		&\geq 2\Kb(\varrho_Ru(t), \varrho_Rv(t)) -(d+2\alpha) \Pb(\varrho_Ru(t),\varrho_Rv(t)) + O(R^{-\alpha}) \\
		&= 2\Gb(\varrho_Ru(t), \varrho_Rv(t)) + O(R^{-\alpha}) \\
		&\geq 2\delta \int_{\Rb^d} |x|^{-\alpha} (|\varrho_R u(t)|^3+|\varrho_Rv(t)|^3) \dd x + O(R^{-\alpha}), \quad \forall t\in \Rb.
	\end{align*}
	We have proved that for all $R\geq R_0$,
	\[
	\frac{\dd}{\dd t} \Mcal_{\phi_R}(t) \geq 2\delta \int_{\Rb^d} |x|^{-\alpha} (|\varrho_R u(t)|^3+ |\varrho_R v(t)|^3) \dd x + O(R^{-\alpha}), \quad \forall t\in \Rb. 
	\]
	Integrating in time, we get for any time interval $J \subset \Rb$,
	\[
	\int_J \int_{\Rb^d} |x|^{-\alpha} (|\varrho_R u(t)|^3+ |\varrho_R v(t)|^3) \dd x \dd t \lesssim \sup_{t\in J} |\Mcal_{\phi_R}(t)| + R^{-\alpha} |J|. 
	\]
	By the definition of $\varrho_R$, we get
	\[
	\int_J \int_{|x|\leq R/2} |x|^{-\alpha} (| u(t)|^3+ |v(t)|^3) \dd x \dd t \lesssim \sup_{t\in J} |\Mcal_{\phi_R}(t)| + R^{-2} |J| \lesssim R + R^{-\alpha} |J|,
	\]
	where we use that
	\[
	|\Mcal_{\phi_R}(t)| \leq \|\nabla \phi_R\|_{L^\infty} (\|\nabla u(t)\|_{L^2} \|v(t)\|_{L^2} + \|\nabla v(t)\|_{L^2} \|v(t)\|_{L^2}) \lesssim R.
	\]
	On the other hand, estimating as for $E_3(u,v)$, we have
	\[
	\int_{|x|>R/2} |x|^{-\alpha} (|u(t)|^3+|v(t)|^3) \dd x \lesssim R^{-\alpha}
	\]
	which implies
	\[
	\int_J \int_{|x|>R/2} |x|^{-\alpha} (|u(t)|^3+|v(t)|^3) \dd x \dd t \lesssim R^{-\alpha} |J|.
	\]
	Collecting these estimates, we get for all $R\geq R_0$,
	\[
	\int_J \int_{\Rb^d} |x|^{-\alpha} (|u(t)|^3+|v(t)|^3) \dd x \dd t \lesssim R + R^{-\alpha}|J|.
	\]
	For $|J|\geq R_0^{1+\alpha}$, we take $R=|J|^{\frac{1}{1+\alpha}}\geq R_0$ to get
	\[
	\int_J \int_{\Rb^d} |x|^{-\alpha} (|u(t)|^3+|v(t)|^3) \dd x \dd t \lesssim |J|^{\frac{1}{1+\alpha}}.
	\]
	If $|J| \leq R_0^{1+\alpha}$, we use the standard Gagliardo--Nirenberg inequality \eqref{gn-ineq} and \eqref{blow-alte} to get
	\begin{align*}
		\int_J \int_{\Rb^d} |x|^{-\alpha} (|u(t)|^3+|v(t)|^3) \dd x \dd t &\leq C(\kappa)\int_J (\Kb(u(t),v(t))^{\frac{d+2\alpha}{4}} (\Mb(u(t),v(t)))^{\frac{6-d-2\alpha}{4}} \dd t \\
		&\lesssim |J| \lesssim R_0^\alpha |J|^{\frac{1}{1+\alpha}}. 
	\end{align*}
	In all cases, we have proved \eqref{space-time-est}.

	{\bf Step 2. A $L^2$-limit.} From \eqref{space-time-est}, we infer that there exists $t_n\rightarrow +\infty$ such that for any $R>0$,
	\begin{align} \label{L2-limi}
		\lim_{n\rightarrow \infty} \int_{|x|\leq R} |u(t_n,x)|^2 + 2 |v(t_n,x)|^2 \dd x =0.
	\end{align}
	This condition, combined with the scattering criterion given in Lemma \ref{lem-scat-crite}, yields the energy scattering in the positive time direction. To see \eqref{L2-limi}, we infer from \eqref{space-time-est} that 
	\[
	\liminf_{t\rightarrow +\infty} \int_{\Rb^d} |x|^{-\alpha}(|u(t)|^3+|v(t)|^3)\dd x =0.
	\]
	Indeed, if it is not true, then there exist $t_0>0$ and $\rho_0>0$ such that 
	\[
	\int_{\Rb^d} |x|^{-\alpha} (|u(t)|^3+|v(t)|^3)\dd x \geq \rho_0, \quad \forall t\geq t_0. 
	\]
	Take an interval $J\subset [t_0,+\infty)$. We have
	\[
	\int_J \int_{\Rb^d}|x|^{-\alpha} (|u(t)|^3+|v(t)|^3)\dd x \dd t \geq \rho_0 |J|
	\]
	which contradicts \eqref{space-time-est} for $|J|$ sufficiently large. Thus there exists $t_n \rightarrow +\infty$ such that
	\[
	\lim_{n\rightarrow \infty} \int_{\Rb^d} |x|^{-\alpha}(|u(t_n)|^3+|v(t_n)|^3)\dd x =0.
	\]
	Now let $R>0$. We have
	\begin{align*}
		\int_{|x|\leq R} |u(t_n)|^2 \dd x &\leq \left(\int_{|x|\leq R} |x|^{2\alpha} \dd x\right)^{\frac{1}{3}} \left(\int_{\Rb^d} |x|^{-\alpha} |u(t_n)|^3 \dd x\right)^{\frac{2}{3}} \\
		&\leq C R^{\frac{d+2\alpha}{3}} \left(\int_{\Rb^d} |x|^{-\alpha} |u(t_n)|^3 \dd x\right)^{\frac{2}{3}}.
	\end{align*}
	Thus
	\begin{align*}
		\int_{|x|\leq R} |u(t_n)|^2 +2 |v(t_n)|^2 \dd x &\leq C R^{\frac{d+2\alpha}{3}}\left[\left(\int_{\Rb^d} |x|^{-\alpha} |u(t_n)|^3 \dd x\right)^{\frac{2}{3}} + \left(\int_{\Rb^d} |x|^{-\alpha} |v(t_n)|^3 \dd x\right)^{\frac{2}{3}}\right]  \\
		&\leq C R^{\frac{d+2\alpha}{3}} \left(\int_{\Rb^d} |x|^{-\alpha} (|u(t_n)|^3 +|v(t_n)|^3) \dd x\right)^{\frac{2}{3}} \rightarrow 0 \text{ as } n\rightarrow \infty
	\end{align*}
	which proves \eqref{L2-limi}. The proof is complete.
\end{proof}

\section{Blow-up}
\label{S-blow}
\setcounter{equation}{0}

\subsection{Virial identities}

\begin{lemma} \label{lem-loca-viri-iden}
	Let $(u,v)$ be a $\Hc^1$-solution to \eqref{INLS} defined on the maximal time interval $(-T_*,T^*)$. Let $\chi: \Rb^d \rightarrow \Rb$ be a sufficiently smooth and decaying function. Define
	\begin{align*} 
	\Vc_{\chi}(t):= \int_{\Rb^d} \chi (|u(t)|^2+2|v(t)|^2) \dd x
	\end{align*}
	and
	\begin{align*}
	\Mcal_{\chi}(t) := \ima \int_{\Rb^d} \nabla \chi \cdot (\nabla u(t) \overline{u}(t) + \nabla v(t) \overline{v}(t)) \dd x. 
	\end{align*}
	Then for all $t\in (-T_*,T^*)$, 
	\begin{align*} 
	\frac{\dd}{\dd t} \Vc_{\chi}(t) = \ima \int_{\Rb^d} \nabla \chi \cdot (\nabla u(t) \overline{u}(t) + 2 \kappa \nabla v(t) \overline{v}(t)) \dd x 
	\end{align*}
	and 
	\begin{align*}
	\frac{\dd}{\dd t} \Mcal_{\chi}(t) &= \frac{1}{4} \int_{\Rb^d} \Delta^2 \chi (|u(t)|^2 + \kappa |v(t)|^2) \dd x +\sum_{j,k=1}^d \rea \int_{\Rb^d} \partial^2_{jk} \chi (\partial_j \overline{u}(t) \partial_k u(t) + \kappa \partial_j \overline{v}(t) \partial_k v(t)) \dd x \\
	&\quad- \frac{1}{2} \rea \int_{\Rb^d} \Delta \chi |x|^{-\alpha} u^2(t) \overline{v}(t) \dd x + \rea \int_{\Rb^d} \nabla \chi \cdot \nabla (|x|^{-\alpha}) u^2(t) \overline{v}(t) \dd x. 
	\end{align*}
	In particular, if $\kappa=\frac{1}{2}$, then we have
	\begin{align} \label{viri-mass-reso}
	\frac{\dd}{\dd t} V_{\chi}(t) = \Mcal_{\chi}(t).
	\end{align}
\end{lemma}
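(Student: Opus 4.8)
The plan is to derive both identities by differentiating under the integral sign, using \eqref{INLS} to replace the time derivatives, and then integrating by parts; these manipulations are formal and are justified in the standard way by an approximation argument, proving the identities first for smooth, sufficiently decaying solutions coming from regularized initial data and then passing to the limit via the continuity of the $\Hc^1$-flow from Proposition \ref{prop-lwp-inls} together with the continuity of $\Vc_\chi$ and $\Mcal_\chi$ on $\Hc^1$. Throughout, $\chi$ is smooth with bounded derivatives of every order that appears, so that all quantities are well-defined under the standing hypotheses $1\le d\le 5$, $0<\alpha<\min\{2,d\}$.

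For $\frac{\dd}{\dd t}\Vc_\chi$, writing $\partial_t|u|^2=2\rea(\overline u\,\partial_t u)$ and $\partial_t|v|^2=2\rea(\overline v\,\partial_t v)$ and substituting $\ic\partial_t u=-\tfrac12\Delta u-|x|^{-\alpha}\overline u v$ and $\ic\partial_t v=-\tfrac{\kappa}{2}\Delta v+\gamma v-\tfrac12|x|^{-\alpha}u^2$, the term $\gamma|v|^2$ disappears because it is real, and one obtains
\[
\partial_t\left(|u|^2+2|v|^2\right)=-\nabla\cdot\ima(\overline u\,\nabla u)-2\kappa\,\nabla\cdot\ima(\overline v\,\nabla v)-2\ima\!\left(|x|^{-\alpha}\overline{u}^2 v\right)-2\ima\!\left(|x|^{-\alpha}u^2\overline v\right).
\]
Since $\ima(|x|^{-\alpha}\overline{u}^2 v)=-\ima(|x|^{-\alpha}u^2\overline v)$, the two nonlinear terms cancel — this is the local form of the conservation of $\Mb$ — and multiplying by $\chi$, integrating over $\Rb^d$, and integrating by parts once yields the stated expression for $\frac{\dd}{\dd t}\Vc_\chi$. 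When $\kappa=\tfrac12$ this coincides with $\Mcal_\chi$, which is \eqref{viri-mass-reso}.

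For $\frac{\dd}{\dd t}\Mcal_\chi$, I would separate the right-hand side into a linear part (from the terms $\tfrac12\Delta u$ and $\tfrac{\kappa}{2}\Delta v$) and a nonlinear part. The linear part is the classical localized virial/Morawetz computation: differentiating $\ima\int\overline u\,\nabla\chi\cdot\nabla u\,\dd x$ in time, replacing $\partial_t u$ by $\tfrac{\ic}{2}\Delta u$ and integrating by parts repeatedly, one gets $\sum_{j,k}\rea\int\partial^2_{jk}\chi\,\partial_j\overline u\,\partial_k u\,\dd x$ plus a constant multiple of $\int\Delta^2\chi\,|u|^2\,\dd x$; the $v$-contribution is the same up to the factor $\kappa$ coming from the coefficient $\kappa/2$ of $\Delta v$, and together they assemble into the first two integrals of the statement. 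For the nonlinear part, the $\gamma v$ term again drops out after an integration by parts, and the remaining contributions are $-2\rea\int|x|^{-\alpha}\overline u v\,(\nabla\chi\cdot\nabla\overline u)\,\dd x$, $-\rea\int|x|^{-\alpha}u^2\,(\nabla\chi\cdot\nabla\overline v)\,\dd x$, and two terms involving $\Delta\chi$. Using the identity $\overline u\,\nabla\overline u=\tfrac12\nabla(\overline{u}^2)$ and integrating by parts in the first of these, it becomes
\[
\rea\!\int\left(\nabla\chi\cdot\nabla(|x|^{-\alpha})\right)\overline{u}^2 v\,\dd x+\rea\!\int|x|^{-\alpha}u^2\,(\nabla\chi\cdot\nabla\overline v)\,\dd x+\rea\!\int\Delta\chi\,|x|^{-\alpha}\overline{u}^2 v\,\dd x,
\]
so that the middle piece cancels the $u^2(\nabla\chi\cdot\nabla\overline v)$ term exactly, and collecting the $\Delta\chi$-terms (using $\rea\int\Delta\chi\,|x|^{-\alpha}\overline{u}^2 v\,\dd x=\rea\int\Delta\chi\,|x|^{-\alpha}u^2\overline v\,\dd x$) leaves precisely $-\tfrac12\rea\int\Delta\chi\,|x|^{-\alpha}u^2\overline v\,\dd x+\rea\int\nabla\chi\cdot\nabla(|x|^{-\alpha})\,u^2\overline v\,\dd x$, as claimed.

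The computation is essentially bookkeeping, so the main difficulty is organizational and technical rather than conceptual: one must keep careful track of signs and of which factor receives the derivative under each integration by parts — in particular arranging the nonlinear terms so that exactly one extra derivative falls on the weight $|x|^{-\alpha}$ — and one must justify the term-by-term differentiation and the integrations by parts, which a priori are valid only for smooth, decaying $(u,v)$ (hence the regularization argument) and which require the weighted integrals, notably those involving $\nabla(|x|^{-\alpha})\sim|x|^{-\alpha-1}$, to be absolutely convergent; this last point is ensured by $\alpha<\min\{2,d\}$ together with the Sobolev embeddings already used in Section \ref{S-lwp} (and in the applications is automatic, since there $\chi$ is chosen so that $\nabla\chi$ vanishes at the origin).
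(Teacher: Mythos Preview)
Your proposal is correct and follows essentially the same route as the paper: both compute $\partial_t(|u|^2+2|v|^2)$ and $\partial_t\ima(\overline u\,\partial_k u+\overline v\,\partial_k v)$ by substituting the equations and integrating by parts, with the nonlinear contributions organized so that the cross terms cancel and a single derivative lands on $|x|^{-\alpha}$. The only difference is packaging: the paper isolates the scalar identity for $\ic\partial_t u+\beta\Delta u=H$ in a separate appendix lemma (Lemma~\ref{lem-viri-iden}) and then specializes to $\beta=\tfrac12,\ H=-|x|^{-\alpha}\overline u v$ and $\beta=\tfrac\kappa2,\ H=\gamma v-\tfrac12|x|^{-\alpha}u^2$, whereas you carry out the same manipulations directly on the system.
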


\begin{remark}\label{x2-chi}
	If $\chi(x)=|x|^2$, we have
	\[
	\frac{\dd}{\dd t} \Mcal_{|x|^2}(t) = 2 \Gb(u(t),v(t)),
	\]
	where $G(u,v)$ is as in \eqref{Gb}.	
\end{remark}

\begin{remark} \label{rem-loca-viri-rad}
	If $\chi$ is radial, then we have
	\begin{align*}
	\frac{\dd}{\dd t} \Mcal_{\chi}(t) &= \frac{1}{4} \int_{\Rb^d} \Delta^2\chi (|u(t)|^2 + \kappa |v(t)|^2) \dd x + \int_{\Rb^d} \frac{\chi'}{r} (|\nabla u(t)|^2 + \kappa |\nabla v(t)|^2) \dd x \\
	&\quad + \int_{\Rb^d} \left(\frac{\chi''}{r^2}-\frac{\chi'}{r^3}\right) (|x\cdot \nabla u(t)|^2 + \kappa |x\cdot \nabla v(t)|^2) \dd x \\
	&\quad -\frac{1}{2} \rea \int_{\Rb^d} \left( \chi'' + (d-1+2\alpha)\frac{\chi'}{r}\right) |x|^{-\alpha} u^2(t) \overline{v}(t) \dd x.
	\end{align*}
	This follows from a direct computation using the fact that
	\begin{align*}
	\sum_{j,k}^n \partial^2_{jk} \chi \partial_j \overline{u} \partial_k u &= \frac{\chi'}{r} |\nabla u|^2 + \left(\frac{\chi''}{r^2}-\frac{\chi'}{r^3}\right) |x\cdot \nabla u|^2, \\
	\Delta \chi &= \chi'' + (n-1)\frac{\chi'}{r}, \\
	\nabla \chi \cdot \nabla (|x|^{-\alpha})&= -\alpha |x|^{-\alpha} \frac{\chi'}{r}.
	\end{align*}
\end{remark}

\begin{proof}[Proof of Lemma \ref{lem-loca-viri-iden}]
	It follows directly from the following observation: for any $a\in \Rb$,
		\begin{align*}
		\partial_t (|u|^2 + a |v|^2) &= -\nabla \cdot \ima (\overline{u} \nabla u) - a\kappa \nabla \cdot \ima (\overline{v} \nabla v) - (a-2) \ima (|x|^{-\alpha} u^2 \overline{v}), \\
		\partial_t (\ima (\overline{u} \partial_k u) +  \ima(\overline{v} \partial_k v)) &= \frac{1}{4} \partial_k \Delta(|u|^2) + \frac{\kappa}{4} \partial_k \Delta(|v|^2) - \sum_{j=1}^d \partial_j \rea (\partial_j \overline{u} \partial_k u) -  \kappa \sum_{j=1}^d \partial_j \rea (\partial_j \overline{v} \partial_k v) \\
		&\quad + \frac{1}{2} \partial_k \rea (|x|^{-\alpha} u^2 \overline{v}) + \rea (\partial_k(|x|^{-\alpha}) u^2 \overline{v}), \quad \forall k=1, \cdots, d.
		\end{align*}
		To see these identities, we apply Lemma \ref{lem-viri-iden} with $\beta =\frac{1}{2}$ and $H = -|x|^{-\alpha} \overline{u} v$ to get
		\begin{align*}
		\partial_t |u|^2 &= -\nabla \cdot \ima (\overline{u} \nabla u) - 2\ima (|x|^{-\alpha} \overline{u}^2 v) = -\nabla \cdot \ima (\overline{u} \nabla u) + 2\ima (|x|^{-\alpha} u^2\overline{v}), \\
		\partial_t \ima (\overline{u} \partial_k u) &= \frac{1}{4} \partial_k\Delta(|u|^2) - \sum_{j=1}^d \partial_j \rea (\partial_j \overline{u} \partial_k u) -2 \rea (|x|^{-\alpha} u \partial_k u \overline{v}) + \partial_k \rea (|x|^{-\alpha} u^2 \overline{v}).
		\end{align*}
		Applying Lemma \ref{lem-viri-iden} for $\beta = \frac{\kappa}{2}$ and $H=\gamma v - \frac{1}{2} |x|^{-\alpha} u^2$, we get
		\begin{align*}
		\partial_t |v|^2 &= -\kappa \nabla \cdot \ima(\overline{v} \nabla v) + 2\ima \left[\overline{v}\left(\gamma v-\frac{1}{2} |x|^{-\alpha} u^2\right)\right] = -\kappa \nabla \cdot \ima(\overline{v} \nabla v) - \ima (|x|^{-\alpha} u^2 \overline{v}), \\
		\partial_t \ima (\overline{v} \partial_k v) &= \frac{\kappa}{4} \partial_k \Delta(|v|^2) - \kappa \sum_{j=1}^d \partial_j \rea (\partial_j \overline{v} \partial_k v) \\
		&\quad+ 2\rea \left[\left(\gamma \overline{v} - \frac{1}{2} |x|^{-\alpha} \overline{u}^2 \right) \partial_k v\right] - \partial_k \rea \left[ \left(\gamma v-\frac{1}{2} |x|^{-\alpha} u^2\right) \overline{v}\right] \\
		&=\frac{\kappa}{4} \partial_k \Delta(|v|^2) - \kappa \sum_{j=1}^d \partial_j \rea (\partial_j \overline{v} \partial_k v) -\rea (|x|^{-\alpha} u^2 \partial_k \overline{v}) +\frac{1}{2} \partial_k \rea (|x|^{-\alpha} u^2 \overline{v}).
		\end{align*}
		Collecting the above identities and using the fact that
		\[
		\partial_k \rea(|x|^{-\alpha} u^2 \overline{v}) = \rea (\partial_k(|x|^{-\alpha}) u^2 \overline{v} + 2|x|^{-\alpha} u\partial_k u \overline{v} + |x|^{-\alpha} u^2 \partial_k \overline{v}),
		\]
		we obtain the desired identities.
\end{proof}

\subsection{A cutoff function}
Let $R>0$. We define the radial function
\begin{align} \label{chi-R}
\chi_R(x) =\chi_R(r) = R^2 \chi(r/R), \quad r=|x|.
\end{align}
with
\[
\chi(r) = \int_0^r \zeta(s) \dd s, 
\]
where $\zeta: [0,\infty) \rightarrow [0,\infty)$ satisfies
\[
\zeta(r) =\left\{
\renewcommand*{\arraystretch}{1.3}
\begin{array}{ccl}
2r &\text{if}& 0\leq r \leq  1, \\
2r - 2(r-1)^4 &\text{if}& 1 <r \leq 1+\frac{1}{\sqrt[3]{4}}, \\
\text{smooth and } \zeta'(r)<0 &\text{if}& 1+\frac{1}{\sqrt[3]{4}} < r \leq 2, \\
0 &\text{if}& r> 2.
\end{array}
\right.
\]
We collect some properties of $\chi_R$ in the following lemma.

\begin{lemma} \label{lem-prop-chi-R}
	We have
	\begin{align} \label{chi-R-prop-1}
	\|\nabla^j \chi_R\|_{L^\infty} &\lesssim R^{2-j}, \quad 0\leq j\leq 4, 
	\end{align}
	and
	\begin{align} \label{chi-R-prop-2}
	\supp(\nabla^j \chi_R) &\subset \left\{
	\renewcommand*{\arraystretch}{1.3}
	\begin{array}{ccl}
	\{|x| \leq 2R\} &\text{if}& j=1,2, \\
	\{R\leq |x| \leq 2R\} &\text{if}& j=3,4. 
	\end{array}
	\right.
	\end{align}
	and
	\begin{align} \label{chi-R-prop-3}
	\frac{\chi'_R(r)}{r} \leq 2, \quad \chi''_R(r) \leq 2, \quad \forall r\geq 0.
	\end{align}
	In addition, we have
	\begin{align} \label{chi-R-prop-4}
	\frac{\chi'_R(r)}{r} -\chi''_R(r) \geq 0, \quad \forall r \geq 0.
	\end{align}
\end{lemma}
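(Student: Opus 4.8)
Everything reduces, through the scaling $\chi_R(r) = R^2\chi(r/R)$ with $\chi(s)=\int_0^s\zeta$, to elementary properties of the fixed one-dimensional profile $\zeta$ (equivalently $\chi$). First I would record the shape of $\chi$ on the two ``flat'' regions: since $\zeta(s)=2s$ on $[0,1]$ and $\zeta\equiv 0$ on $[2,\infty)$, one has $\chi(s)=s^2$ for $s\le 1$ and $\chi(s)=\chi(2)$ (constant) for $s\ge 2$, i.e.\ $\chi_R(x)=|x|^2$ on $\{|x|\le R\}$ and $\chi_R\equiv R^2\chi(2)$ on $\{|x|\ge 2R\}$. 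I would also note that $\zeta\in C^3([0,\infty))$: the only nontrivial junction is $s=1$, where $\zeta,\zeta',\zeta''$ agree with $2s,2,0$ and $\zeta'''$ is $0$ from both sides because the correction $2(s-1)^4$ vanishes to third order at $s=1$; the ``smooth, $\zeta'<0$'' branch on $(1+\tfrac{1}{\sqrt[3]{4}},2]$ can be glued $C^\infty$ to the quartic branch and to $0$ at $s=2$ by the standard construction. Hence $\chi\in C^4$, and $\chi^{(j)}$ for $0\le j\le 4$ is continuous and (for $j\ge 1$) supported in $[0,2]$.

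\textbf{The bound \eqref{chi-R-prop-1} and the support \eqref{chi-R-prop-2}.} Differentiating the scaling relation gives $\chi_R^{(j)}(r)=R^{2-j}\chi^{(j)}(r/R)$, which is $\lesssim R^{2-j}$; for the Cartesian derivatives of the radial function $\chi_R$ one uses that $\chi_R=|x|^2$ is polynomial on $\{|x|\le R\}$ and that, on $\{|x|\ge R\}$, $|\nabla^j\chi_R|\lesssim\sum_{k\le j}|\chi_R^{(k)}(r)|\,r^{k-j}\lesssim\sum_{k\le j}R^{2-k}R^{k-j}=R^{2-j}$, giving \eqref{chi-R-prop-1}. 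For \eqref{chi-R-prop-2}, the two flat-region identities show $\nabla^j\chi_R\equiv 0$ on $\{|x|\ge 2R\}$ for every $j\ge 1$ (this gives $j=1,2$) and, since $\nabla^j|x|^2\equiv 0$ for $j\ge 3$, also $\nabla^j\chi_R\equiv 0$ on $\{|x|\le R\}$ when $j\ge 3$ (this gives $j=3,4$).

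\textbf{The pointwise inequalities \eqref{chi-R-prop-3} and \eqref{chi-R-prop-4}.} This is where the precise profile matters. Writing $s=r/R$ one has $\chi_R'(r)/r=\zeta(s)/s$ and $\chi_R''(r)=\zeta'(s)$, so \eqref{chi-R-prop-3} is equivalent to $\zeta(s)\le 2s$ and $\zeta'(s)\le 2$ on $[0,\infty)$, while \eqref{chi-R-prop-4} is equivalent to $s\mapsto\zeta(s)/s$ being non-increasing, because $\zeta(s)/s-\zeta'(s)=-s\,\tfrac{d}{ds}\big(\zeta(s)/s\big)$. I would verify all three piecewise: on $[0,1]$, equality/constancy ($\zeta=2s$, $\zeta'=2$, $\zeta/s\equiv 2$); on $(1,1+\tfrac{1}{\sqrt[3]{4}}]$, $\zeta'=2-8(s-1)^3<2$, $\zeta=2s-2(s-1)^4<2s$, and $\zeta(s)/s=2-2(s-1)^4/s$ is decreasing since $\tfrac{d}{ds}\big((s-1)^4/s\big)=(s-1)^3(3s+1)/s^2>0$; the cutoff $s=1+\tfrac{1}{\sqrt[3]{4}}$ is exactly the point where the quartic branch reaches $\zeta'=0$, so on $(1+\tfrac{1}{\sqrt[3]{4}},2]$ the hypotheses $\zeta'<0$ and $\zeta\ge 0$ give $\zeta'<0\le 2$, $\zeta$ decreasing hence $\zeta(s)\le\zeta(1+\tfrac{1}{\sqrt[3]{4}})<2s$, and $\tfrac{d}{ds}(\zeta/s)=\zeta'/s-\zeta/s^2<0$; on $(2,\infty)$ all three are trivial as $\zeta\equiv 0$. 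Continuity of $\zeta/s$ at the junctions then promotes the piecewise monotonicity to monotonicity on $[0,\infty)$, which is \eqref{chi-R-prop-4}.

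\textbf{Main obstacle.} There is no serious difficulty; the content is purely in checking the \emph{designed} features of $\zeta$ --- that $\zeta$ really is $C^3$ at $s=1$ (this forces the quartic, rather than lower-order, correction) and that $1+\tfrac{1}{\sqrt[3]{4}}$ is precisely the point where the quartic branch has $\zeta'=0$, so the smooth decreasing tail attaches there without spoiling $\zeta'\le 2$ or the monotonicity of $\zeta(s)/s$.
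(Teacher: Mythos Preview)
Your proposal is correct and follows essentially the same route as the paper. The paper dismisses \eqref{chi-R-prop-1}--\eqref{chi-R-prop-3} as immediate from the definition and only writes out \eqref{chi-R-prop-4} piecewise, computing on the interval $1<s\le 1+\tfrac{1}{\sqrt[3]{4}}$ the same algebraic identity $8(s-1)^3-2(s-1)^4/s=2(s-1)^3(3s+1)/s\ge 0$ that you obtain (up to a factor of $2s$) by differentiating $(s-1)^4/s$. Your repackaging of \eqref{chi-R-prop-4} as ``$\zeta(s)/s$ is non-increasing'' is a minor cosmetic difference, not a different argument.
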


\begin{proof}
	The estimates \eqref{chi-R-prop-1}--\eqref{chi-R-prop-3} follow directly from the choice of $\chi$. The estimate \eqref{chi-R-prop-4} follows from
	\begin{align*}
	\frac{\chi'_R(r)}{r} = \frac{\zeta(r/R)}{r/R} = \left\{
	\renewcommand*{\arraystretch}{1.3}
	\begin{array}{ccl}
	2 &\text{if}& 0\leq r/R \leq 1, \\
	2- 2\frac{(r/R-1)^4}{r/R} &\text{if}& 1< r/R \leq 1+\frac{1}{\sqrt[3]{4}}, \\
	\geq 0  &\text{if}& 1+\frac{1}{\sqrt[3]{4}} < r/R \leq 2, \\
	0&\text{if}& r/R > 2.
	\end{array}
	\right.
	\end{align*}
	and 
	\begin{align*}
	\chi''_R(r) = \zeta'(r/R) = \left\{
	\renewcommand*{\arraystretch}{1.3}
	\begin{array}{ccl}
	2 &\text{if} & 0\leq r/R \leq 1, \\
	2-8(r/R-1)^3 &\text{if}& 1 < r/R \leq 1+\frac{1}{\sqrt[3]{4}}, \\
	<0 &\text{if}& 1+\frac{1}{\sqrt[3]{4}} < r/R \leq 2, \\
	0 &\text{if}& r/R> 2.
	\end{array}
	\right.
	\end{align*}
\end{proof}

\subsection{Localized virial identity}
Let $\chi_R$ be as in \eqref{chi-R}. We define the localized virial quantity
\begin{align} \label{M-chi-R}
\Mcal_{\chi_R}(t) = \ima \int_{\Rb^d} \nabla \chi_{R} \cdot (\nabla u(t) \overline{u}(t) +\nabla v(t) \overline{v}(t)) \dd x.
\end{align}
Then we have (using Remark \ref{rem-loca-viri-rad})
\begin{align*}
\frac{\dd}{\dd t} \Mcal_{\chi_R}(t) &= \frac{1}{4} \int_{\Rb^d} \Delta^2\chi_R (|u(t)|^2 + \kappa |v(t)|^2) \dd x + \int_{\Rb^d} \frac{\chi'_R}{r} (|\nabla u(t)|^2 + \kappa |\nabla v(t)|^2) \dd x \\
&\quad + \int_{\Rb^d} \left(\frac{\chi''_R}{r^2}-\frac{\chi'_R}{r^3}\right) (|x\cdot \nabla u(t)|^2 + \kappa |x\cdot \nabla v(t)|^2) \dd x \\
&\quad -\frac{1}{2} \rea \int_{\Rb^d} \left( \chi''_R + (d-1+2\alpha)\frac{\chi'_R}{r}\right) |x|^{-\alpha} u^2(t) \overline{v}(t) \dd x.
\end{align*}
We can rewrite it as
\begin{align*}
\frac{\dd}{\dd t} \Mcal_{\chi_R}(t) &= 2 \Gb(u(t),v(t)) + \frac{1}{4} \int_{\Rb^d} \Delta^2\chi_R (|u(t)|^2 + \kappa |v(t)|^2) \dd x \\
&\quad - \int_{\Rb^d} \left(2-\frac{\chi'_R}{r}\right) (|\nabla u(t)|^2 + \kappa |\nabla v(t)|^2) \dd x \\
&\quad + \int_{\Rb^d} \left(\frac{\chi''_R}{r^2}-\frac{\chi'_R}{r^3}\right) (|x\cdot \nabla u(t)|^2 + \kappa |x\cdot \nabla v(t)|^2) \dd x \\
&\quad + \frac{1}{2} \rea \int_{\Rb^d} \left( (2-\chi''_R) + (d-1+2\alpha) \left(2-\frac{\chi'_R}{r}\right)\right) |x|^{-\alpha} u^2(t) \overline{v}(t) \dd x \\
&= 2\Gb(u(t),v(t)) + E_1(u(t),v(t)) + E_2(u(t),v(t)) +E_3(u(t),v(t)),
\end{align*}
where
\begin{align} \label{E123}
\begin{aligned}
E_1(u,v) &= \frac{1}{4} \int_{\Rb^d} \Delta^2\chi_R (|u|^2 + \kappa |v|^2) \dd x, \\
E_2(u,v) &= - \int_{\Rb^d} \left(2-\frac{\chi'_R}{r}\right) (|\nabla u|^2 + \kappa |\nabla v|^2) \dd x + \int_{\Rb^d} \left(\frac{\chi''_R}{r^2}-\frac{\chi'_R}{r^3}\right) (|x\cdot \nabla u|^2 + \kappa |x\cdot \nabla v|^2) \dd x, \\
E_3(u,v) &= \frac{1}{2} \rea \int_{\Rb^d} \left( (2-\chi''_R) + (d-1+2\alpha) \left(2-\frac{\chi'_R}{r}\right)\right) |x|^{-\alpha} u^2 \overline{v} \dd x.
\end{aligned}
\end{align}

\subsection{Mass-critical blow-up solutions}
Before giving the proof of Theorem \ref{theo-blow-crit}, we need the following preliminary lemma.
\begin{lemma}
	Define
	\begin{align} \label{chi-12R}
	\chi_{1R}(r):= 2-\frac{\chi'_R(r)}{r}, \quad \chi_{2R}(r):= 2-\chi''_R(r) + 3 \left(2-\frac{\chi'_R(r)}{r}\right).
	\end{align}
	The following properties hold:
	\begin{align}
	|\chi_{2R}(r)| &\lesssim 1,  \quad \forall r\geq 0 \label{chi-12R-prop-1} \\
	|\nabla(\chi_{2R}^{\frac{2}{3}}(r))| &\lesssim R^{-1}, \quad \forall r\geq 0, \label{chi-12R-prop-2}
	\end{align}
	and for $R>0$ sufficiently large,
	\begin{align} \label{chi-12R-prop-3}
	\chi_{1R}(r) - CR^{-\alpha} \chi_{2R}^{\frac{4}{3}}(r) \geq 0, \quad \forall r\geq 0.
	\end{align}
\end{lemma}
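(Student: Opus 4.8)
The plan is to remove the $R$-dependence via the substitution $s = r/R$. Since $\chi_R(r) = R^2 \chi(r/R)$ and $\chi' = \zeta$, one has $\chi'_R(r)/r = \zeta(s)/s$ and $\chi''_R(r) = \zeta'(s)$, so that, recalling \eqref{chi-12R}, $\chi_{1R}(r) = F_1(s)$ and $\chi_{2R}(r) = F_2(s)$ with
\[
F_1(s) := 2 - \frac{\zeta(s)}{s}, \qquad F_2(s) := (2-\zeta'(s)) + 3 F_1(s), \qquad s \geq 0.
\]
By \eqref{chi-R-prop-3} (and $\zeta \geq 0$) we have $0 \leq \zeta(s)/s \leq 2$ and $\zeta'(s) \leq 2$, so $F_1 \geq 0$ and $2-\zeta' \geq 0$; moreover $\zeta'$ is bounded on $[0,\infty)$ (it is $2$ on $[0,1]$, a cubic on $(1,1+\frac{1}{\sqrt[3]{4}}]$, a fixed smooth function on $(1+\frac{1}{\sqrt[3]{4}},2]$, and $0$ beyond $2$). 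Hence $0 \leq F_2(s) \leq (2+\|\zeta'\|_{L^\infty}) + 6$ for all $s$, which is \eqref{chi-12R-prop-1}.

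For \eqref{chi-12R-prop-2} I would use that $\chi_{2R}^{\frac{2}{3}}$ is radial, so $|\nabla(\chi_{2R}^{\frac{2}{3}})(r)| = R^{-1}\,|(F_2^{2/3})'(r/R)|$, and it is enough to check $(F_2^{2/3})' \in L^\infty([0,\infty))$. Now $F_2$ is locally constant — equal to $0$ on $[0,1)$ and to $8$ on $(2,\infty)$ — so $(F_2^{2/3})'$ vanishes there, and the only delicate point on the compact interval $[1,2]$ is $s=1$, where $F_2$ vanishes. Using $\zeta(s) = 2s - 2(s-1)^4$ on $(1,1+\frac{1}{\sqrt[3]{4}}]$ gives $F_2(s) = 8(s-1)^3 + 6(s-1)^4/s = (s-1)^3 h(s)$ with $h(s) = 14 - 6/s$ smooth and positive near $s=1$ (so $h(1)=8$); hence $F_2(s)^{2/3} = (s-1)^2 h(s)^{2/3}$ is $C^1$ across $s=1$ (matching the zero branch from the left), and since $F_2 > 0$ on $(1,2]$ the function $F_2^{2/3}$ is Lipschitz on all of $[1,2]$. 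This bounds $(F_2^{2/3})'$, giving \eqref{chi-12R-prop-2}.

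The core assertion is \eqref{chi-12R-prop-3}. Since $F_1 \geq 0$, it suffices to produce $\Lambda > 0$ such that $F_2(s)^{4/3} \leq \Lambda F_1(s)$ for all $s \geq 0$; then $\chi_{1R}(r) - C R^{-\alpha} \chi_{2R}^{\frac{4}{3}}(r) \geq (1 - C\Lambda R^{-\alpha}) F_1(r/R) \geq 0$ as soon as $R \geq (C\Lambda)^{1/\alpha}$. To get such a $\Lambda$ I would split into regions: on $[0,1]$ both sides vanish; on $(2,\infty)$ one has $F_1 \equiv 2$ and $F_2 \equiv 8$; and on the compact interval $[1,2]$, where $F_1, F_2 > 0$ on $(1,2]$, the quotient $F_2^{4/3}/F_1$ is continuous and, using $F_1(s) = 2(s-1)^4/s$ and $F_2(s) = (s-1)^3 h(s)$ on $(1,1+\frac{1}{\sqrt[3]{4}}]$, extends continuously to $s=1$ with limit $\frac{1}{2}s\,h(s)^{4/3}$ evaluated at $s=1$, namely $\frac{1}{2}\cdot 8^{4/3} = 8$; hence it is bounded there. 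Taking $\Lambda$ the maximum of these bounds finishes the proof.

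The crux — and the reason \eqref{chi-R} was built with the quartic correction $2(s-1)^4$ near $s=1$ rather than a generic smoothing — is the matching of vanishing orders at $s=1$: $F_1$ vanishes like $(s-1)^4$ while $F_2$ vanishes like $(s-1)^3$, so $F_2^{4/3}$ vanishes like $(s-1)^4$ and is absorbed by $F_1$, while $F_2^{2/3}$ vanishes like $(s-1)^2$, which makes its gradient bounded. Everything else is bookkeeping with the piecewise formulas for $\zeta$, $\zeta'$ and compactness on $[1,2]$; I expect the comparison near $s=1$ to be the only genuine obstacle.
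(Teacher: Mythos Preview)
Your proposal is correct and follows essentially the same approach as the paper: both arguments hinge on the explicit formulas $F_1(s)=2(s-1)^4/s$ and $F_2(s)=(s-1)^3(14-6/s)$ on $(1,1+4^{-1/3}]$, which give the crucial matching of vanishing orders at $s=1$, together with the strict positivity of $F_1,F_2$ on the remainder of $[1,2]$. The paper carries out the region-by-region verification by hand (separately on $(1,1+4^{-1/3}]$ and $(1+4^{-1/3},2]$), whereas you wrap the away-from-$s=1$ bounds into a compactness argument on $[1,2]$; the substance is the same.
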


\begin{proof}
	We have
	\begin{align*}
	2-\frac{\chi'_R(r)}{r} = \left\{
	\renewcommand*{\arraystretch}{1.3}
	\begin{array}{ccl}
	0 &\text{if}& 0\leq r/R \leq 1, \\
	2\frac{(r/R-1)^4}{r/R} &\text{if}& 1< r/R \leq 1+\frac{1}{\sqrt[3]{4}}, \\
	2-\frac{\zeta(r/R)}{r/R}  &\text{if}& 1+\frac{1}{\sqrt[3]{4}} < r/R \leq 2, \\
	2&\text{if}& r/R > 2,
	\end{array}
	\right.
	\end{align*}
	and 
	\begin{align*}
	2-\chi''_R(r) = \left\{
	\renewcommand*{\arraystretch}{1.3}
	\begin{array}{ccl}
	0 &\text{if} & 0\leq r/R \leq 1, \\
	8(r/R-1)^3 &\text{if}& 1 < r/R \leq 1+\frac{1}{\sqrt[3]{4}}, \\
	2-\zeta'(r/R) &\text{if}& 1+\frac{1}{\sqrt[3]{4}} < r/R \leq 2, \\
	2 &\text{if}& r/R > 2.
	\end{array}
	\right.
	\end{align*}
	Note that $\zeta'(r/R) <0$ for all $1+\frac{1}{\sqrt[3]{4}} < r/R \leq 2$. From this, we have \eqref{chi-12R-prop-1}. 
	
	To verify \eqref{chi-12R-prop-2}, we consider several cases. 
	If $0\leq r/R \leq 1$ or $r/R>2$, then $\chi_{2R}(r)$ is constant. So \eqref{chi-12R-prop-2} holds trivially. 
	
	If $1<r/R\leq 1+\frac{1}{\sqrt[3]{4}}$, we have
	\[
	\chi_{2R}(r) = 8(r/R-1)^3 + 6 \frac{(r/R-1)^4}{r/R} = (r/R-1)^3 \left(14 - \frac{6}{r/R}\right).
	\]
	Thus
	\[
	\chi_{2R}^{\frac{2}{3}}(r) = (r/R-1)^2 \left(14-\frac{6}{r/R}\right)^{\frac{2}{3}} =: h(r/R),
	\]
	where
	\[
	h(\lambda)= (\lambda-1)^2 \left(14-\frac{6}{\lambda}\right)^{\frac{2}{3}}.
	\]
	It follows that
	\[
	|\nabla (\chi_{2R}^{\frac{2}{3}}(r))| = |\partial_r(\chi_{2R}^{\frac{2}{3}}(r))| = \frac{1}{r} |h'(r/R)|,
	\]
	where
	\[
	h'(\lambda) = 2(\lambda-1) \left(14-\frac{6}{\lambda}\right)^{-\frac{1}{3}} \left(14-\frac{4}{\lambda} - \frac{2}{\lambda^2}\right).
	\]
	We readily see that $|h'(\lambda)| \leq C$ for all $1<\lambda \leq 1+\frac{1}{\sqrt[3]{4}}$. Thus \eqref{chi-12R-prop-2} holds in this range. 
	
	If $1+\frac{1}{\sqrt[3]{4}} <r/R \leq 2$, then $\chi''_{R}(r) =\zeta'(r/R)<0$ and
	\[
	\frac{\chi'_R(r)}{r} = \frac{\zeta(r/R)}{r/R} \in \left[\frac{\zeta(2)}{2}, \frac{\zeta(1+1/\sqrt[3]{4})}{1+1/\sqrt[3]{4}}\right) = \left[0, \frac{3+4\sqrt[3]{4}}{2+2\sqrt[3]{4}} \right).
	\]
	Hence
	\[
	\chi_{2R}(r) \geq 2+\frac{3}{2+2\sqrt[3]{4}}, \quad \forall~ 1+\frac{1}{\sqrt[3]{4}} < r/R \leq 2.
	\]  
	As one can readily check that $|\nabla \chi_{2R}(r)| \lesssim R^{-1}$ and $\nabla (\chi_{2R}^{\frac{2}{3}}(r)) = \frac{2}{3} \frac{\nabla \chi_{2R}(r)}{\chi_{2R}^{1/3}(r)}$, we deduce that \eqref{chi-12R-prop-2} holds for $1+\frac{1}{\sqrt[3]{4}}<r/R \leq 2$. This finishes the proof of \eqref{chi-12R-prop-2}.
	
	Finally, let us show \eqref{chi-12R-prop-3}. As above, we consider three cases. If $0\leq r/R \leq 1$, then \eqref{chi-12R-prop-3} is obvious as $\chi_{1R}(r)=\chi_{2R}(r)=0$.
	
	If $1<r/R \leq 1+\frac{1}{\sqrt[3]{4}}$, we have
	\[
	\chi_{1R}(r) = 8\frac{(r/R-1)^4}{r/R}
	\]
	and
	\[
	\chi_{2R}(r) = (r/R-1)^3 \left(14-\frac{6}{r/R}\right) < (r/R-1)^3 \left(14-\frac{6\sqrt[3]{4}}{1+\sqrt[3]{4}}\right).
	\]
	Thus we get
	\begin{align*}
	\chi_{1R}(r)-CR^{-\alpha} \chi_{2R}^{\frac{4}{3}}(r) &> 8\frac{(r/R-1)^4}{r/R} - CR^{-\alpha} (r/R-1)^4 \left(14-\frac{6\sqrt[3]{4}}{1+\sqrt[3]{4}}\right)^{\frac{4}{3}} \\
	&=(r/R-1)^4 \left(\frac{8}{r/R}- CR^{-\alpha}\left(14-\frac{6\sqrt[3]{4}}{1+\sqrt[3]{4}}\right)^{\frac{4}{3}}\right).
	\end{align*}
	As $1<r/R\leq 1+\frac{1}{\sqrt[3]{4}}$, by taking $R>0$ sufficiently large, we get
	\[
	\chi_{1R}(r)-CR^{-\alpha}\chi_{2R}^{\frac{4}{3}}(r) \geq (r/R-1)^4.
	\]
	
	If $r/R >1+\frac{1}{\sqrt[3]{4}}$, we have $\zeta'(r/R) \leq 0$, hence
	\[
	\chi_{1R}(r) =2-\frac{\chi'_R(r)}{r} =2-\frac{\zeta(r/R)}{r/R} \geq 2 -\frac{\zeta(1+1/\sqrt[3]{4})}{1+1/\sqrt[3]{4}}=\frac{1}{2+2\sqrt[3]{4}}.
	\] 
	On the other hand, we have $|\chi_{2R}(r)| \lesssim 1$ (by \eqref{chi-12R-prop-1}). Therefore, \eqref{chi-12R-prop-3} holds provided $R>0$ is taken sufficiently large.  The proof is complete.
\end{proof}

\begin{proof}[Proof of Theorem \ref{theo-blow-crit}]
	Let $\Mcal_{\chi_R}(t)$ be as in \eqref{M-chi-R}. We have
	\[
	\frac{\dd}{\dd t} \Mcal_{\chi_R}(t) = 2 \Gb(u(t),v(t)) + E_1(u(t),v(t)) + E_2(u(t),v(t)) + E_3(u(t),v(t)), \quad \forall t\in (-T_*,T^*),
	\]
	where $E_1, E_2, E_3$ are as in \eqref{E123}. 
	By the conservation of mass, we infer from \eqref{chi-R-prop-1} that
	\[
	E_1(u(t),v(t)) \leq C(\kappa, \Mb(u_0,v_0)) R^{-2}.
	\]
	Using \eqref{chi-R-prop-4}, we have
	\[
	E_2(u(t),v(t)) \leq - \int_{\Rb^d} \chi_{1R} (|\nabla u(t)|^2 + \kappa |\nabla v(t)|^2) \dd x, 
	\]
	where $\chi_{1R}$ is as in \eqref{chi-12R}. From \eqref{chi-12R}, we also have
	\[
	E_3(u(t),v(t)) = \frac{1}{2} \rea \int_{\Rb^d} \chi_{2R} |x|^{-\alpha} u^2(t) \overline{v}(t) \dd x.
	\]
	As $\supp(\chi_{2R})\subset \{|x| \geq R\}$, we see that
	\begin{align*}
	E_3(u(t),v(t)) &\lesssim R^{-\alpha} \left(\int_{\Rb^d} \chi_{2R} |u(t)|^3 \dd x\right)^{\frac{2}{3}} \left(\int_{\Rb^d} \chi_{2R} |v(t)|^3 \dd x \right)^{\frac{1}{3}} \\
	&\lesssim R^{-\alpha} \left( \int_{\Rb^d} \chi_{2R} |u(t)|^3 \dd x + \int_{\Rb^d} \chi_{2R} |v(t)|^3 \dd x\right),
	\end{align*}
	where we have used the inequality $a^2b\leq \frac{1}{3} (2a^3+b^3)$ for all $a,b\geq 0$. 
	
	Let us first consider the case $d=3$ which corresponds to $\alpha=\frac{1}{2}$. We estimate
	\begin{align*}
	\int_{\Rb^d} \chi_{2R} |u(t)|^3 \dd x &\leq \|\chi_{2R} |u(t)|^{\frac{3}{2}}\|_{L^4} \||u(t)|^{\frac{3}{2}}\|_{L^{\frac{4}{3}}} \\
	&\leq \|\chi_{2R}^{\frac{2}{3}} u(t)\|^{\frac{3}{2}}_{L^6} \|u(t)\|^{\frac{3}{2}}_{L^2} \\
	&\leq C(\Mb(u_0,v_0)) \|\nabla (\chi_{2R}^{\frac{2}{3}} u(t))\|^{\frac{3}{2}}_{L^2} \\
	&\leq C(\Mb(u_0,v_0)) \left(\|\nabla (\chi_{2R}^{\frac{2}{3}} u(t))\|^2_{L^2}+1\right) \\
	&\leq C(\Mb(u_0,v_0)) \left( \|\chi_{2R}^{\frac{2}{3}} \nabla u(t)\|^2_{L^2} + \|\nabla(\chi_{2R}^{\frac{2}{3}}) u(t)\|^2_{L^2}+1\right).
	\end{align*}
	Similarly, we have
	\[
	\int_{\Rb^d} \chi_{2R} |v(t)|^3 \dd x \leq C(\kappa,\Mb(u_0,v_0)) \left(\kappa \|\chi_{2R}^{\frac{2}{3}} \nabla v(t)\|^2_{L^2} + \kappa \|\nabla (\chi_{2R}^{\frac{2}{3}}) v(t)\|^2_{L^2} +1 \right).
	\]
	Thus we get
	\begin{align*}
	E_3(u(t),v(t)) &\leq CR^{-\alpha} \left(\|\chi_{2R}^{\frac{2}{3}} \nabla u(t)\|^2_{L^2} + \kappa \|\chi_{2R}^{\frac{2}{3}} \nabla v(t)\|^2_{L^2}\right) \\
	&\quad + C R^{-\alpha} \left(\|\nabla(\chi_{2R}^{\frac{2}{3}}) u(t)\|^2_{L^2} + \kappa \|\nabla(\chi_{2R}^{\frac{2}{3}} v(t)\|^2_{L^2}\right) + CR^{-\alpha}
	\end{align*}
	for some constant $C=C(\kappa,\Mb(u_0,v_0))>0$. It follows that
	\begin{align*}
	\frac{\dd}{\dd t} \Mcal_{\chi_R}(t) &\leq 2\Gb(u(t),v(t)) - \int_{\Rb^d} \left(\chi_{1R} - CR^{-\alpha} \chi_{2R}^{\frac{4}{3}}\right) (|\nabla u(t)|^2 + \kappa |\nabla v(t)|^2) \dd x \\
	&\quad + CR^{-\alpha} \int_{\Rb^d} |\nabla(\chi_{2R}^{\frac{2}{3}})|^2 (|u(t)|^2+\kappa |v(t)|^2) \dd x + CR^{-\alpha}, \quad \forall t\in (-T_*,T^*).
	\end{align*}
	Thanks to \eqref{blow-cond}, \eqref{chi-12R-prop-2}, \eqref{chi-12R-prop-3}, and the conservation of mass, we take $R>0$ sufficiently large to get
	\[
	\frac{\dd}{\dd t} \Mcal_{\chi_R}(t) \leq -\delta, \quad \forall t\in (-T_*,T^*).
	\]
	
	$\bullet$ In the mass-resonance case, i.e., $\kappa=\frac{1}{2}$, we have
	\[
	\frac{\dd^2}{\dd t^2}\Vc_{\chi_R}(t) =\frac{\dd}{\dd t}\Mcal_{\chi_R}(t) \leq -\delta, \quad \forall t\in(-T_*,T^*),
	\]
	where 
	\begin{align} \label{V-chi-R}
	\Vc_{\chi_R}(t) = \int_{\Rb^d} \chi_R (|u(t)|^2 + 2|v(t)|^2) \dd x.
	\end{align}
	Taking the integration over $[0,t]$, we obtain
	\[
	\Vc_{\chi_R}(t) \leq \Vc_{\chi_R}(0) + t \frac{\dd}{\dd t} \Vc_{\chi_R}(0) -\frac{\delta}{2} t^2, \quad \forall t\in [0,T^*).
	\]
	Assume by contradiction that $T^*=\infty$. As $\delta>0$, there exists $t_*>0$ sufficiently large such that $\Vc_{\chi_R}(t_*) <0$ which is not possible, hence $T^*<\infty$. 
	
	$\bullet$ In the non mass-resonance case, i.e., $\kappa \ne \frac{1}{2}$, if $T^*<\infty$, we are done. Otherwise, if $T^*=\infty$, then we have
	\[
	\frac{\dd}{\dd t} \Mcal_{\chi_R}(t) \leq -\delta, \quad \forall t \in [0,\infty).
	\]
	Integrating over $[0,t]$, we get
	\[
	\Mcal_{\chi_R}(t) \leq \Mcal_{\chi_R}(0) -\delta t\leq -\frac{\delta}{2} t, \quad \forall t\geq t_0:=\frac{|\Mcal_{\chi_R}(0)|}{\delta}.
	\]
	By H\"older's  inequality, we infer that
	\begin{align*}
	\frac{\delta}{2} t \leq |\Mcal_{\chi_R}(t)| &\leq \|\nabla \chi_R\|_{L^\infty} (\|\nabla u(t)\|_{L^2} \|u(t)\|_{L^2} + \|\nabla v(t)\|_{L^2} \|v(t)\|_{L^2}) \\
	&\leq C(R, \kappa, \Mb(u_0,v_0)) \sqrt{\Kb(u(t),v(t))}, \quad \forall t\geq t_0
	\end{align*}
	which yields $\Kb(u(t),v(t)) \geq Ct^2$ for some constant $C=C(R, \kappa, \delta, \Mb(u_0,v_0))>0$. 
	
	This completes the proof for positive times in three dimensions. The one for negative times is treated in a similar manner.
	
	The proof is similar when $d=2$ and $\alpha=1$. We need to use the Sobolev embedding $H^1(\Rb^2) \subset L^6(\Rb^2)$ to get
	\begin{align*}
	\int_{\Rb^d} \chi_{2R} |u(t)|^3 \dd x &\leq \|\chi_{2R} |u(t)|^{\frac{3}{2}}\|_{L^4} \||u(t)|^{\frac{3}{2}}\|_{L^{\frac{4}{3}}} \\
	&\leq \|\chi_{2R}^{\frac{2}{3}} u(t)\|^{\frac{3}{2}}_{L^6} \|u(t)\|^{\frac{3}{2}}_{L^2} \\
	&\leq C(\Mb(u_0,v_0)) \|\chi_{2R}^{\frac{2}{3}} u(t)\|^{\frac{3}{2}}_{H^1} \\
	&\leq C(\Mb(u_0,v_0)) \left(\|\chi_{2R}^{\frac{2}{3}} u(t)\|^2_{H^1} +1\right) \\
	&\leq C(\Mb(u_0,v_0)) \left(\|\nabla (\chi_{2R}^{\frac{2}{3}} u(t))\|^2_{L^2}+\|\chi_{2R}^{\frac{2}{3}} u(t)\|^2_{L^2}+1\right) \\
	&\leq C(\Mb(u_0,v_0)) \left( \|\chi_{2R}^{\frac{2}{3}} \nabla u(t)\|^2_{L^2} + \|\nabla(\chi_{2R}^{\frac{2}{3}}) u(t)\|^2_{L^2}+\|\chi_{2R}^{\frac{2}{3}} u(t)\|^2_{L^2}+1\right).
	\end{align*}
	Similarly, we also have
	\[
	\int_{\Rb^d} \chi_{2R} |v(t)|^3 \dd x \leq C(\kappa,\Mb(u_0,v_0)) \left(\kappa \|\chi_{2R}^{\frac{2}{3}} \nabla v(t)\|^2_{L^2} +\kappa \|\nabla(\chi_{2R}^{\frac{2}{3}}) v(t)\|^2_{L^2}+ \kappa\|\chi_{2R}^{\frac{2}{3}} v(t)\|^2_{L^2}+1\right).
	\]
	The remaining argument is exactly the same as above as the additional term can be estimated, using \eqref{chi-12R-prop-1} and the mass conservation, as
	\[
	\|\chi_{2R}^{\frac{2}{3}} u(t)\|^2_{L^2} + \kappa \|\chi_{2R}^{\frac{2}{3}} v(t)\|^2_{L^2} \leq C(\kappa,\Mb(u_0,v_0)).
	\]
	
	Finally, we will verify that \eqref{blow-cond} is satisfied if $\Hb(u_0,v_0)<0$. It follows directly from the conservation of mass and energy, and the fact (see \eqref{est-Hb}) that
	\[
	\Gb(u(t),v(t)) = \Kb(u(t),v(t))-2\Pb(u(t),v(t)) = 2\left(\Eb(u(t),v(t)) - \gamma \|v(t)\|^2_{L^2}\right) \leq 2 \Hb(u(t),v(t)). 
	\]
	The proof is complete.
\end{proof}

\subsection{Mass-supercritical blow-up solutions}
Before proving Theorem \ref{theo-blow-supe}, we have the following observation.
\begin{lemma} \label{lem-obse-Gb}
	Let $2\leq d\leq 5$, $0<\alpha<\min\{2,d\}$, $\alpha>\frac{4-d}{2}$, $\alpha<\frac{6-d}{2}$ if $3\leq d\leq 5$, $\kappa>0$, and $\gamma \in \Rb$. Let $(u_0,v_0) \in \Hc^1$ and $(u,v) \in C((-T_*,T^*), \Hc^1)$ be the corresponding maximal solution to \eqref{INLS}. Assume that \eqref{blow-cond} holds. Then there exists $\vareps_0 =\vareps_0(\delta)>0$ such that
	\begin{align} \label{est-solu-blow-supe}
	\Gb(u(t),v(t)) + \vareps \Kb(u(t),v(t)) \leq -\frac{\delta}{2}, \quad \forall t\in (-T_*,T^*)
	\end{align}
	for all $0<\vareps \leq \vareps_0$. In addition, 
	\begin{align} \label{inf-solu-supe}
	\inf_{t\in (-T_*,T^*)} \Kb(u(t),v(t)) \geq C
	\end{align}
	for some constant $C>0$.
\end{lemma}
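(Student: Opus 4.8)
The plan is to convert the hypothesis \eqref{blow-cond} into the sharper bound \eqref{est-solu-blow-supe} by means of a Pohozaev-type identity which, in the mass-supercritical regime, lets $\Gb$ control $\Kb$ from above with the favourable sign. First I would record the algebraic relation between the functionals: since
\begin{align*}
\Gb(u,v)=\Kb(u,v)-\frac{d+2\alpha}{2}\Pb(u,v),\qquad \Eb(u,v)-\gamma\|v\|^2_{L^2}=\frac12\Kb(u,v)-\Pb(u,v),
\end{align*}
eliminating $\Pb$ gives
\begin{align*}
\Gb(u,v)=-\mu\,\Kb(u,v)+\frac{d+2\alpha}{2}\left(\Eb(u,v)-\gamma\|v\|^2_{L^2}\right),\qquad \mu:=\frac{d+2\alpha-4}{4},
\end{align*}
with $\mu>0$ precisely because we are in the mass-supercritical case $\alpha>\frac{4-d}{2}$. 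Evaluating at $(u(t),v(t))$, using conservation of mass and energy together with the bound $\Eb(u(t),v(t))-\gamma\|v(t)\|^2_{L^2}\le\Hb(u(t),v(t))=\Hb(u_0,v_0)$ from \eqref{est-Hb}, I obtain
\begin{align*}
\Gb(u(t),v(t))+\mu\,\Kb(u(t),v(t))\le C_0:=\frac{d+2\alpha}{2}\Hb(u_0,v_0),\qquad \forall t\in(-T_*,T^*).
\end{align*}

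Next, \eqref{est-solu-blow-supe} follows by a convex combination of this inequality with \eqref{blow-cond}: for any $\lambda\in[0,1]$ and any $t$,
\begin{align*}
\Gb(u(t),v(t))+\lambda\mu\,\Kb(u(t),v(t))=\lambda\big(\Gb+\mu\Kb\big)(u(t),v(t))+(1-\lambda)\Gb(u(t),v(t))\le\lambda(C_0+\delta)-\delta.
\end{align*}
I would then fix $\lambda_0\in(0,1]$ small enough that $\lambda_0(C_0+\delta)\le\delta/2$ (any $\lambda_0\in(0,1]$ works when $C_0+\delta\le0$) and set $\vareps_0:=\lambda_0\mu>0$; for every $0<\vareps\le\vareps_0$, writing $\lambda=\vareps/\mu\le\lambda_0$, the displayed estimate yields $\Gb(u(t),v(t))+\vareps\Kb(u(t),v(t))\le-\delta/2$ for all $t$. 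Note $\vareps_0$ depends only on $\delta$ and the fixed data $d,\alpha,\Hb(u_0,v_0)$.

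For \eqref{inf-solu-supe} I would invoke the Gagliardo--Nirenberg inequality \eqref{GN-ineq}. From \eqref{blow-cond} and the definition of $\Gb$ one has $\frac{d+2\alpha}{2}\Pb(u(t),v(t))\ge\Kb(u(t),v(t))+\delta\ge\delta$, hence $\Pb(u(t),v(t))>0$; inserting this into \eqref{GN-ineq} and using conservation of mass,
\begin{align*}
\frac{2\delta}{d+2\alpha}\le\Pb(u(t),v(t))\le C_{\GN}\big(\Mb(u_0,v_0)\big)^{\frac{6-d-2\alpha}{4}}\big(\Kb(u(t),v(t))\big)^{\frac{d+2\alpha}{4}},
\end{align*}
and since the exponent of $\Kb$ is positive this forces $\Kb(u(t),v(t))\ge C$ uniformly in $t$, with $C>0$. (Here $\Mb(u_0,v_0)>0$, since otherwise $(u_0,v_0)\equiv(0,0)$ and $\Gb\equiv0$, contradicting \eqref{blow-cond}.)

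The one point to watch is that the naive estimate $\Gb+\vareps\Kb\le-\delta+\vareps\Kb$ is useless, because $\Kb(u(t),v(t))$ need not be bounded along the flow; the essential mechanism is that large $\Kb$ forces, through the identity of the first step, a correspondingly very negative $\Gb$, so the perturbation $\vareps\Kb$ can be absorbed. Beyond this observation there is no genuine analytic difficulty, the argument being elementary once the identity is in place.
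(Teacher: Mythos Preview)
Your proof is correct and follows essentially the same route as the paper: the key identity relating $\Gb$, $\Kb$, and $\Eb-\gamma\|v\|^2_{L^2}$ is identical, and your convex-combination step is algebraically equivalent to the paper's manipulation (your parameter $\lambda=\vareps/\mu$ is exactly the paper's $4\vareps/(d+2\alpha-4)$). For \eqref{inf-solu-supe} the paper argues by contradiction (if $\Kb(u(t_n),v(t_n))\to0$ then $\Gb(u(t_n),v(t_n))\to0$ via \eqref{GN-ineq}), whereas you give the direct quantitative version of the same idea; this is a cosmetic difference, not a different approach.
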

\begin{proof}
	By \eqref{est-Hb}, and the conservation laws of mass and energy, we have for all $t\in (-T_*,T^*)$,
	\begin{align}
	\Gb(u(t),v(t)) &=\Kb(u(t),v(t)) - \frac{d+2\alpha}{2} \Pb(u(t),v(t)) \nonumber\\
	&= \frac{d+2\alpha}{2} \left(\Eb(u(t),v(t)) -  \gamma \|v(t)\|^2_{L^2}\right) -\frac{d+2\alpha-4}{4} \Kb(u(t),v(t)) \nonumber\\
	&\leq \frac{d+2\alpha}{2} \Hb(u(t),v(t)) -\frac{d+2\alpha-4}{4} \Kb(u(t),v(t)). \label{est-GHK}
	\end{align}
	For $\vareps>0$ small, we infer from \eqref{est-GHK} that
	\begin{align*}
	\Gb(u(t),v(t)) + \vareps \Kb(u(t),v(t)) &\leq \Gb(u(t),v(t)) + \vareps \left( \frac{2(d+2\alpha)}{d+2\alpha-4} \Hb(u(t),v(t)) - \frac{4}{d+2\alpha-4} \Gb(u(t),v(t))\right) \\
	&= \left(1-\frac{4\vareps}{d+2\alpha-4}\right)\Gb(u(t),v(t)) + \frac{2\vareps(d+2\alpha)}{d+2\alpha-4} \Hb(u(t),v(t))
	\end{align*}
	which together with \eqref{blow-cond}, and the conservation laws of mass and energy imply
	\[
	\Gb(u(t),v(t)) +\vareps \Kb(u(t),v(t)) \leq -\left(1-\frac{4\vareps}{d+2\alpha-4}\right)\delta  + \frac{2\vareps(d+2\alpha)}{d+2\alpha-4} \Hb(u_0,v_0) \leq -\frac{\delta}{2} 
	\]
	provided that $0<\vareps \leq \vareps_0$ with some $\vareps_0=\vareps_0(\delta)>0$.	This proves \eqref{est-solu-blow-supe}.	
	
	To see \eqref{est-solu-blow-supe}, we argue by contradiction. Assume that there exists a time sequence $(t_n)_n\subset (-T_*,T^*)$ such that  $\Kb(u(t_n),v(t_n)) \rightarrow 0$. By \eqref{GN-ineq} and the conservation of mass, we readily see that $\Gb(u(t_n),v(t_n))\rightarrow 0$. This contradicts \eqref{blow-cond}. 
\end{proof}

\begin{proof}[Proof of Theorem \ref{theo-blow-supe}]
	We will consider separately two cases: $2\leq d\leq 4$ and $d=5$.
	
	{\bf Case 1. $2\leq d\leq 4$.} Let $\Mcal_{\chi_R}(t)$ be as in \eqref{M-chi-R}. We have
	\begin{align*}
	\frac{\dd}{\dd t} \Mcal_{\chi_R}(t) = 2\Gb(u(t),v(t)) + E_1(u(t),v(t)) + E_2(u(t),v(t)) + E_3(u(t),v(t)), \quad \forall t\in (-T_*,T^*),
	\end{align*}
	with $E_1, E_2, E_3$ as in \eqref{E123}. By \eqref{chi-R-prop-1}, the conservation of mass implies
	\[
	E_1(u(t),v(t)) \leq C(\kappa, \Mb(u_0,v_0)) R^{-2}.
	\]
	Thanks to \eqref{chi-R-prop-3} and \eqref{chi-R-prop-4}, we have $E_2(u(t),v(t)) \leq 0$. On the other hand, from \eqref{chi-R-prop-2} and \eqref{chi-R-prop-3}, we have
	\begin{align*}
	E_3(u(t),v(t)) &\leq C \int_{|x|\geq R} |x|^{-\alpha} |u(t)|^2 |v(t)| \dd x \\
	&\leq C R^{-\alpha} \|u(t)\|^2_{L^3} \|v(t)\|_{L^3}.
	\end{align*}
	Using the standard Gagliardo--Nirenberg inequality
	\[
	\|f\|^3_{L^3} \leq C\|\nabla f\|^{\frac{d}{2}}_{L^2} \|f\|^{\frac{6-d}{2}}_{L^2},
	\]
	we get
	\begin{align*}
	E_3(u(t),v(t)) &\leq CR^{-\alpha} \|\nabla u(t)\|^{\frac{d}{3}}_{L^2} \|u(t)\|^{\frac{6-d}{3}}_{L^2} \|\nabla v(t)\|^{\frac{d}{6}}_{L^2} \|v(t)\|^{\frac{6-d}{6}}_{L^2} \\
	&\leq C(\kappa, \Mb(u_0,v_0)) R^{-\alpha} (\Kb(u(t),v(t))^{\frac{d}{4}} \\
	&\leq \left\{
	\renewcommand*{\arraystretch}{1.3}
	\begin{array}{ccl}
	C(\kappa, \Mb(u_0,v_0)) R^{-\alpha} (\Kb(u(t),v(t)) +1) &\text{if}& d=2,3, \\
	C(\kappa, \Mb(u_0,v_0)) R^{-\alpha} \Kb(u(t),v(t)) &\text{if}& d=4.
	\end{array}
	\right.
	\end{align*}
	Collecting the above estimates, we obtain for all $t\in (-T_*,T^*)$,
	\[
	\frac{\dd}{\dd t} \Mcal_{\chi_R}(t) \leq 2 \Gb(u(t),v(t)) + \left\{
	\renewcommand*{\arraystretch}{1.3}
	\begin{array}{ccl}
	C R^{-\alpha} \Kb(u(t),v(t)) + C R^{-\alpha} &\text{if}& d=2,3, \\
	C R^{-\alpha} \Kb(u(t),v(t)) + C R^{-2} &\text{if}& d=4,
	\end{array}
	\right.
	\]
	for some constant $C$ depending on $\kappa$ and $\Mb(u_0,v_0)$.
	
	Using \eqref{est-solu-blow-supe}, there exists $\vareps_0>0$ such that for all $t\in (-T_*,T^*)$,
	\[
	\frac{\dd}{\dd t} \Mcal_{\chi_R}(t) \leq -(2\vareps_0 -CR^{-\alpha}) \Kb(u(t),v(t)) - \delta + \left\{
	\renewcommand*{\arraystretch}{1.3}
	\begin{array}{ccl}
	C R^{-\alpha} &\text{if}& d=2,3, \\
	C R^{-2} &\text{if}& d=4.
	\end{array}
	\right.
	\]
	Taking $R>0$ sufficiently large, we get 
	\[
	\frac{\dd}{\dd t} \Mcal_{\chi_R}(t) \leq -\vareps_0 \Kb(u(t),v(t)) - \frac{\delta}{2}, \quad \forall t\in (-T_*,T^*).
	\]
	From this, we deduce the blow-up as follows. 
	
	$\bullet$ In the mass-resonance case, i.e., $\kappa=\frac{1}{2}$, we have
	\[
	\frac{\dd^2}{\dd t^2}\Vc_{\chi_R}(t) =\frac{\dd}{\dd t} \Mcal_{\chi_R}(t) \leq -\frac{\delta}{2},\quad \forall t\in (-T_*,T^*),
	\]
	where $\Vc_{\chi_R}$ is as in \eqref{V-chi-R}. The same argument as in the proof of Theorem \ref{theo-blow-crit} yields $T_*,T^*<\infty$. 
	
	$\bullet$ In the non mass-resonance, i.e., $\kappa \ne \frac{1}{2}$, we argue by contradiction. Assume that $T^*=\infty$. We have
	\begin{align} \label{est-M-chi-R}
	\frac{\dd}{\dd t} \Mcal_{\chi_R}(t) \leq -\vareps_0 \Kb(u(t),v(t)) - \frac{\delta}{2}, \quad \forall t\in [0,\infty).
	\end{align}
	Integrating this inequality over $[0,t]$, we obtain
	\[
	\Mcal_{\chi_R}(t) \leq \Mcal_{\chi_R}(0) - \frac{\delta}{2} t \leq 0, \quad \forall t\geq t_0:= \frac{2|\Mcal_{\chi_R}(0)|}{\delta}.
	\]
	Integrating \eqref{est-M-chi-R} over $[t_0,t]$, we get
	\[
	\Mcal_{\chi_R}(t) \leq -\vareps_0 \int_{t_0}^t \Kb(u(s),v(s))ds, \quad \forall t\geq t_0.
	\]
	On the other hand, we have
	\begin{align*}
	|\Mcal_{\chi_R}(t)| &\leq \|\nabla \chi_R\|_{L^\infty} \left(\|\nabla u(t)\|_{L^2} \|u(t)\|_{L^2} + \|\nabla v(t)\|_{L^2} \|v(t)\|_{L^2}\right) \\
	&\leq C(R, \kappa, \Mb(u_0,v_0)) \sqrt{\Kb(u(t),v(t))}
	\end{align*}
	which implies
	\[
	\Mcal_{\chi_R}(t) \leq -A \int_{t_0}^t |\Mcal_{\chi_R}(s)|^2 ds, \quad \forall t\geq t_0,
	\]
	where $A=A(\vareps_0, R, \kappa, \delta, \Mb(u_0,v_0))>0$. Set 
	\[
	y(t) = \int_{t_0}^t |\Mcal_{\chi_R}(s)|^2 ds. 
	\]
	We see that $y$ is strictly increasing and non-negative and
	\[
	y'(t) = |\Mcal_{\chi_R}(t)|^2 \geq A^2 y^2(t).
	\]
	Fix some $t_1>t_0$. We integrate this inequality over $[t_1,t]$ and get
	\[
	y(t) \geq \frac{y(t_1)}{1-A^2 y(t_1)(t-t_1)}, \quad \forall t\geq t_1.
	\]
	Thus we have
	\[
	y(t) \rightarrow +\infty \text{ as } t\nearrow t_* :=t_1 +\frac{1}{A^2 y(t_1)}>t_1.
	\]
	Hence $\Mcal_{\chi_R}(t) \leq -A y(t) \rightarrow -\infty$ as $t\nearrow t_*$. Therefore, the solution cannot exist for all time $t\geq 0$ and, consequently, we must have $T^*<\infty$. A similar argument goes for negative times. This finishes the proof for $2\leq d\leq 4$.
	
	{\bf Case 2. $d=5$.} We only consider the positive times since the one for negative times is treated in a similar manner. If $T^*<\infty$, we are done. Otherwise, if $T^*=\infty$, we first show \eqref{blow-infi-supe}. Assume by contradiction that 
	\begin{align} \label{boun-Kb}
	\sup_{t\in [0,\infty)} \Kb(u(t),v(t)) <\infty.
	\end{align}
	Arguing as above, we have
	\[
	\frac{\dd}{\dd t} \Mcal_{\chi_R}(t) \leq 2\Gb(u(t),v(t)) + C R^{-\alpha} \left(\Kb(u(t),v(t))\right)^{\frac{5}{4}} + C R^{-2}, \quad \forall t\in [0,\infty).
	\]
	By \eqref{est-solu-blow-supe}, there exists $\vareps_0>0$ such that 
	\begin{align} \label{blow-infi-supe-prof}
	\frac{\dd}{\dd t}\Mcal_{\chi_R}(t) \leq -2\vareps_0 \Kb(u(t),v(t)) -\delta + CR^{-\alpha} \left(\Kb(u(t),v(t))\right)^{\frac{5}{4}} + CR^{-2}, \quad \forall t\in [0,\infty).
	\end{align}
	By \eqref{boun-Kb}, we take $R>0$ sufficiently large and deduce
	\[
	\frac{\dd}{\dd t}\Mcal_{\chi_R}(t) \leq -\vareps_0\Kb(u(t),v(t))-\frac{\delta}{2}, \quad \forall t\in [0,\infty).
	\]
	From this, we can argue exactly as in Case 1 to show that there exists a finite time $t_*>0$ such that $\Mcal_{\chi_R}(t) \rightarrow -\infty$ as $t\nearrow t_*$. This is a contradiction. Thus we have proved \eqref{blow-infi-supe}.
	
	Let us now prove \eqref{blow-rate-T}. To this end, we fix $T>0$ and set
	\[
	R(T) := B \sup_{t\in[0,T]} \left(\Kb(u(t),v(t))\right)^{\frac{1}{4}}
	\]
	for some $B>0$ to be chosen later. Applying \eqref{blow-infi-supe} with $R=R(T)$, we have
	\[
	\frac{\dd}{\dd t} \Mcal_{\chi_{R(T)}}(t) \leq -2\vareps_0 \Kb(u(t),v(t)) + C(R(T))^{-\alpha} \left( \Kb(u(t),v(t))\right)^{\frac{5}{4}} + C(R(T))^{-2}, \quad \forall t\in [0,\infty).
	\]
	We observe that
	\begin{align*}
	C(R(T))^{-\alpha} \left(\Kb(u(t),v(t))\right)^{\frac{1}{4}} &= \frac{CB^{-\alpha} \left(\Kb(u(t),v(t))\right)^{\frac{1}{4}}}{\sup_{t\in[0,T]} \left(\Kb(u(t),v(t))\right)^{\frac{1}{4}}} \\
	&\leq CB^{-\alpha}, \quad \forall t\in [0,T] 
	\end{align*}
	and
	\begin{align*}
	\frac{C(R(T))^{-2}}{\Kb(u(t),v(t))} &= \frac{CB^{-2}}{\Kb(u(t),v(t))\sup_{t\in[0,T]} \left(\Kb(u(t),v(t))\right)^{\frac{1}{2}}} \\
	&\leq \frac{CB^{-2}}{C_0^{3/2}}, \quad \forall t\in [0,T],
	\end{align*}
	where $C_0$ is as in \eqref{inf-solu-supe}. Thus we get
	\[
	\frac{\dd}{\dd t} \Mcal_{\chi_{R(T)}}(t) \leq -\left(2\vareps_0 - CB^{-\alpha} - \frac{CB^{-2}}{C_0^{3/4}}\right) \Kb(u(t),v(t)), \quad \forall t\in [0,T].
	\]
	Taking $B>0$ sufficiently large, we obtain
	\begin{align} \label{est-RT}
	\frac{\dd}{\dd t} \Mcal_{\chi_{R(T)}}(t) \leq -\vareps_0 \Kb(u(t),v(t)), \quad \forall t\in [0,T].
	\end{align}
	As above, we consider two cases: mass-resonance and non mass-resonance.
	
	$\bullet$ In the mass-resonance case, we have
	\[
	\frac{\dd^2}{\dd t^2} \Vc_{\chi_{R(T)}}(t) =\frac{\dd}{\dd t} \Mcal_{\chi_{R(T)}}(t) \leq -\vareps_0 \Kb(u(t),v(t)), \quad \forall t\in [0,T].
	\]
	It follows that
	\[
	0\leq \Vc_{\chi_{R(T)}}(T) \leq \Vc_{\chi_{R(T)}}(0) + T \frac{\dd}{\dd t} \Vc_{\chi_{R(T)}}(0) -\vareps_0 \int_0^T \int_0^s \Kb(u(\tau),v(\tau)) \dd\tau\dd s.
	\]
	Observe that
	\[
	\Vc_{\chi_{R(T)}}(0) = \int_{\Rb^d} \chi_{R(T)} (|u_0|^2+ 2|v_0|^2) \dd x\leq C(u_0,v_0) (R(T))^2
	\]
	and
	\[
	\frac{\dd}{\dd t}\Vc_{\chi_{R(T)}}(0) = \ima \int_{\Rb^d} \nabla \chi_{R(T)} \cdot (\nabla u_0 \overline{u}_0 + 2\kappa \nabla v_0 \overline{v}_0) \dd x \leq C(\kappa, u_0,v_0) R(T).
	\]
	Thanks to \eqref{inf-solu-supe}, we deduce
	\[
	\vareps_0 \frac{C_0}{2} T^2 \leq \vareps_0 \int_0^T \int_0^s \Kb(u(\tau),v(\tau)) \dd\tau \dd s \leq C T R(T) + C (R(T))^2
	\]
	which implies $R(T) \geq CT$. In particular, we obtain $\sup_{t\in[0,T]} \Kb(u(t),v(t)) \geq C T^4$. 
	
	$\bullet$ In the non mass-resonance case, we have from \eqref{est-RT} that
	\[
	\Mcal_{\chi_{R(T)}}(T) \leq \Mcal_{\chi_{R(T)}}(0) - \vareps_0\int_0^T \Kb(u(s),v(s))\dd s.
	\]
	which together with \eqref{inf-solu-supe} imply
	\[
	\vareps_0 T \leq \vareps_0\int_0^T \Kb(u(s),v(s)) \dd s \leq |\Mcal_{\chi_{R(T)}}(0)| + |\Mcal_{\chi_{R(T)}}(T)|.
	\]	
	Using $|\Mcal_{\chi_{R(T)}}(0)| \leq C(\kappa,u_0,v_0) R(T)$ and
	\begin{align*}
	|\Mcal_{\chi_{R(T)}}(T)| &\leq \|\nabla \chi_{R(T)}\|_{L^\infty} \left(\|\nabla u(T)\|_{L^2}\|u(T)\|_{L^2} + \|\nabla v(T)\|_{L^2}\|v(T)\|_{L^2}\right) \\
	&\leq C(\kappa, \Mb(u_0,v_0)) R(T) \sqrt{\Kb(u(T),v(T))} \\
	&\leq C(\kappa, \Mb(u_0,v_0)) (R(T))^3,
	\end{align*}
	we obtain
	\[
	\vareps_0 T \leq C R(T) + C (R(T))^3.
	\]
	This shows that $R(T)\geq C T^{1/3}$, hence $\inf_{t\in[0,T]} \Kb(u(t),v(t)) \geq C T^{\frac{4}{3}}$. 
	
	Finally, we prove that \eqref{blow-cond} is fulfilled provided that either $\Hb(u_0,v_0)<0$, or if $\Hb(u_0,v_0) \geq 0$, we assume \eqref{cond-blow-supe-1} and \eqref{cond-blow-supe-2}. 
	
	$\bullet$ The case $\Hb(u_0,v_0) <0$. From \eqref{est-GHK}, we have
	\[
	\Gb(u(t),v(t)) \leq \frac{d+2\alpha}{2} \Hb(u_0,v_0)
	\]
	which proves \eqref{blow-cond} with $\delta =- \frac{d+2\alpha}{2} \Hb(u_0,v_0)>0$.
	
	$\bullet$ The case $\Hb(u_0,v_0) \geq 0$ in which \eqref{cond-blow-supe-1} and \eqref{cond-blow-supe-2} are assumed. By the same argument as in the proof of Proposition \ref{prop-gwp} in the super-critical case, we can prove that
	\begin{align} \label{est-KF-blow}
	\Kb(u(t),v(t)) \left(\Mb(u(t),v(t))\right)^\sigma > \Kb(\varphi,\psi) \left(\Mb(\varphi,\psi)\right)^\sigma, \quad \forall t\in (-T_*,T^*).
	\end{align}
	On the other hand, by \eqref{cond-blow-supe-1}, we can take $\rho \in (0,1)$ such that 
	\[
	\Hb(u_0,v_0) \left(\Mb(u_0,v_0)\right)^\sigma \leq (1-\rho) \Eb_0(\varphi,\psi) \left(\Mb(\varphi,\psi)\right)^\sigma.
	\]
	which together with the conservation of mass and energy yield 
	\begin{align*}
	\Hb(u(t),v(t)) \left(\Mb(u(t),v(t))\right)^\sigma &\leq (1-\rho) \Eb_0(\varphi,\psi) \left(\Mb(\varphi,\psi)\right)^\sigma \\
	&= (1-\rho) \frac{d+2\alpha-4}{2(d+2\alpha)} \Kb(\varphi,\psi) \left(\Mb(\varphi,\psi)\right)^\sigma, \quad \forall t\in (-T_*,T^*).
	\end{align*}
	It follows from \eqref{est-GHK} and \eqref{est-KF-blow} that
	\begin{align*}
	&\Gb(u(t),v(t)) \left(\Mb(u(t),v(t))\right)^\sigma \\
	&\quad \quad \leq \frac{d+2\alpha}{2} \Hb(u(t),v(t)) \left(\Mb(u(t),v(t))\right)^\sigma - \frac{d+2\alpha-4}{4} \Kb(u(t),v(t)) \left(\Mb(u(t),v(t))\right)^\sigma \\
	&\quad \quad \leq (1-\rho)  \frac{d+2\alpha-4}{4} \Kb(\varphi,\psi) \left(\Mb(\varphi,\psi)\right)^\sigma - \frac{d+2\alpha-4}{4} \Kb(\varphi,\psi) \left(\Mb(\varphi,\psi)\right)^\sigma \\
	&\quad \quad= -\rho \frac{d+2\alpha-4}{4} \Kb(\varphi,\psi) \left(\Mb(\varphi,\psi)\right)^\sigma, \quad \forall t\in (-T_*,T^*).
	\end{align*}
	This shows \eqref{blow-cond} with 
	\[
	\delta = \rho \frac{d+2\alpha-4}{4} \Kb(\varphi,\psi) \left(\frac{\Mb(\varphi,\psi)}{\Mb(u_0,v_0)}\right)^\sigma >0.
	\] 
	The proof of Theorem \ref{theo-blow-supe} is now complete.
\end{proof}

\section*{Acknowledgment}
V. D. D. was supported in part by the European Union’s Horizon 2020 Research and Innovation Programme (Grant agreement CORFRONMAT No. 758620, PI: Nicolas Rougerie). A. E. is supported by Nazarbayev University under Faculty Development Competitive Research Grants Program  for 2023-2025 (grant number 20122022FD4121). 

\appendix

\section{Virial identity}
\label{S-appen}
\setcounter{equation}{0}
\begin{lemma} \label{lem-viri-iden}
	Let $u$ is a solution to $i\partial_t u + \beta \Delta u = H$. Then the following identities hold:
	\begin{align*}
	\partial_t |u|^2 &= -2\beta \nabla \cdot \ima (\overline{u} \nabla u) + 2 \ima (\overline{u} H), \\
	\partial_t \ima (\overline{u} \partial_k u) &= \frac{\beta}{2} \partial_k \Delta (|u|^2) - 2\beta \sum_{j=1}^d \partial_j \rea (\partial_j \overline{u} \partial_k u) + 2 \rea (\overline{H} \partial_k u) - \partial_k \rea (H \overline{u}), \quad \forall k=1, \cdots, d.
	\end{align*}
\end{lemma}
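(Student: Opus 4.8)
The plan is to substitute the Schr\"odinger equation directly into the time derivatives and then reduce everything to pointwise manipulations of real and imaginary parts, using repeatedly the elementary facts $\ima(iz)=\rea z$ and that a complex quantity and its conjugate have the same real part; no integration by parts or decay assumptions enter this lemma.

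First I would handle the density identity. From $i\partial_t u+\beta\Delta u=H$ we have $\partial_t u=i\beta\Delta u-iH$, so
\[
\partial_t|u|^2=2\rea(\overline{u}\,\partial_t u)=2\rea(i\beta\,\overline{u}\Delta u)-2\rea(i\,\overline{u}H)=-2\beta\,\ima(\overline{u}\Delta u)+2\,\ima(\overline{u}H).
\]
Since $\nabla\cdot(\overline{u}\nabla u)=\overline{u}\Delta u+\nabla\overline{u}\cdot\nabla u$ and $\nabla\overline{u}\cdot\nabla u=\sum_j|\partial_j u|^2$ is real-valued, taking imaginary parts yields $\ima(\overline{u}\Delta u)=\nabla\cdot\ima(\overline{u}\nabla u)$, which gives the first identity.

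For the momentum identity I would expand
\[
\partial_t\,\ima(\overline{u}\,\partial_k u)=\ima\big(\overline{\partial_t u}\,\partial_k u\big)+\ima\big(\overline{u}\,\partial_k\partial_t u\big)
\]
and insert $\partial_t u=i\beta\Delta u-iH$ together with its conjugate. Using $\ima(\pm iz)=\pm\rea z$, the right-hand side collapses to
\[
\partial_t\,\ima(\overline{u}\,\partial_k u)=\beta\big(\rea(\overline{u}\,\partial_k\Delta u)-\rea(\Delta\overline{u}\,\partial_k u)\big)+\rea(\overline{H}\,\partial_k u)-\rea(\overline{u}\,\partial_k H).
\]
It then remains to match the two groups of terms with the stated right-hand side. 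For the dispersive group I would expand $\Delta(|u|^2)=2\rea(\overline{u}\Delta u)+2|\nabla u|^2$, differentiate in $x_k$, and use both that $\partial_k\overline{u}\,\Delta u$ and $\Delta\overline{u}\,\partial_k u$ are complex conjugates and the identity $\sum_j\partial_j\rea(\partial_j\overline{u}\,\partial_k u)=\rea(\Delta\overline{u}\,\partial_k u)+\sum_j\rea(\partial_j\overline{u}\,\partial_j\partial_k u)$ to arrive at
\[
\tfrac12\,\partial_k\Delta(|u|^2)-2\sum_{j=1}^d\partial_j\rea(\partial_j\overline{u}\,\partial_k u)=\rea(\overline{u}\,\partial_k\Delta u)-\rea(\Delta\overline{u}\,\partial_k u),
\]
which is exactly $\beta^{-1}$ times the dispersive group. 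For the source group, the Leibniz rule gives $\partial_k\rea(H\overline{u})=\rea(\overline{u}\,\partial_k H)+\rea(\overline{H}\,\partial_k u)$, whence $2\rea(\overline{H}\,\partial_k u)-\partial_k\rea(H\overline{u})=\rea(\overline{H}\,\partial_k u)-\rea(\overline{u}\,\partial_k H)$. Substituting both back yields the second identity.

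The one point requiring real care is the purely algebraic rearrangement of the dispersive terms: one has to track which products are conjugate to one another and organize the second-order derivatives so that they reassemble into the divergence form $\sum_j\partial_j\rea(\partial_j\overline{u}\,\partial_k u)$ and into $\partial_k\Delta(|u|^2)$. Everything else is routine bookkeeping of signs.
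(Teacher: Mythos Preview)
Your proof is correct and follows essentially the same route as the paper: both substitute $\partial_t u=i\beta\Delta u-iH$ directly, read off the density identity from $\ima(\overline{u}\Delta u)=\nabla\cdot\ima(\overline{u}\nabla u)$, and then verify the momentum identity by the same algebraic rearrangement, expanding $\partial_k\Delta(|u|^2)$ and $\sum_j\partial_j\rea(\partial_j\overline{u}\,\partial_k u)$ and using that $\partial_k\overline{u}\,\Delta u$ and $\Delta\overline{u}\,\partial_k u$ are conjugates. The handling of the source terms via the Leibniz rule is identical as well.
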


\begin{proof}
	We have 
	\begin{align*}
	\partial_t |u|^2 &= 2 \rea (\overline{u} \partial_t u) \\
	&= 2\rea [\overline{u} (i\beta \Delta u - i H)] \\
	&= -2 \beta \ima (\overline{u} \Delta u) + 2 \ima (\overline{u} H) \\
	&= -2\beta \nabla \cdot \ima (\overline{u} \nabla u) + 2\ima (\overline{u} H);
	\end{align*}
	which shows the first identity.
	
	Next we have
	\begin{align*}
	\partial_t \ima (\overline{u} \partial_k u) &= \ima (\overline{\partial_t u} \partial_k u + \overline{u} \partial_k \partial_t u) \\
	&= \ima [(-i\beta \Delta \overline{u} + i \overline{H}) \partial_k u + \overline{u} \partial_k (i\beta \Delta u -i H)] \\
	&=  \rea(-\beta \Delta \overline{u} \partial_k u + \beta \overline{u} \partial_k \Delta u)+ \rea (\overline{H} \partial_k u - \overline{u} \partial_k H). 
	\end{align*}
	As $\partial_k \rea (H \overline{u}) = \rea (\partial_k H \overline{u} + H \partial_k \overline{u})$, we infer that
	\begin{align} \label{viri-iden-prof-1}
	\partial_t \ima (\overline{u} \partial_k u) = \rea(-\beta \Delta \overline{u} \partial_k u + \beta \overline{u} \partial_k \Delta u) + 2 \rea (\overline{H} \partial_k u) - \partial_k \rea (H\overline{u}).
	\end{align}
	On the other hand, we have
	\begin{align*}
	\sum_{j=1}^d \partial_j \rea (\partial_j \overline{u} \partial_k u) &= \sum_{j=1}^d \rea (\partial^2_j \overline{u} \partial_k u + \partial_j \overline{u} \partial^2_{jk} u) \\
	&= \rea (\Delta \overline{u} \partial_k u) +\rea (\nabla \overline{u}\cdot \nabla \partial_k u)
	\end{align*}
	and
	\begin{align*}
	\partial_k \Delta (|u|^2) &= \partial_k \left(\Delta u \overline{u} + 2\nabla u \cdot \nabla \overline{u} + u \Delta \overline{u}\right) \\
	&= \Delta \partial_k u \overline{u} + \Delta u \partial_k \overline{u} + 2 \nabla \partial_k u \cdot \nabla \overline{u} +2\nabla u \cdot \nabla \partial_k \overline{u} + \partial_k u \Delta \overline{u} + u \Delta \partial_k \overline{u} \\
	&= 2 \rea (\Delta \partial_k u \overline{u} + \Delta \overline{u} \partial_k u + 2 \nabla u\cdot \nabla \partial_k \overline{u}).
	\end{align*}
	It follows that
	\[
	\rea (-\beta \Delta \overline{u} \partial_k u + \beta \overline{u} \partial_k \Delta u) = \frac{\beta}{2} \partial_k \Delta (|u|^2) - 2\beta \sum_{j=1}^d \partial_j \rea (\partial_j \overline{u} \partial_k u)
	\]
	which together with \eqref{viri-iden-prof-1} imply the second identity.
\end{proof}

\section*{Acknowledgment}

A. E. is supported by Nazarbayev University under Faculty Development Competitive Research Grants Program  for 2023-2025 (grant number 20122022FD4121).

\vspace{5mm}

\noindent\textbf{Conflict of interest.} The authors declare that they have no conflict of interest.

\noindent\textbf{Data Availability.} There is no data in this paper.

\end{document}